\theoremstyle{plain} 
\newtheorem{theorem}{Theorem}[section] 
\newtheorem{lemma}[theorem]{Lemma}
\newtheorem{cor}[theorem]{Corollary}
\theoremstyle{definition} 
\newtheorem{defi}[theorem]{Definition}
\newtheorem{remark}[theorem]{Remark}
\newtheorem{ex}[theorem]{Example}
\title{{\bf Infinite particle
systems 
of long range jumps 
with long range interactions} \footnote{
2010 {\textit{Mathematics Subject Classification}}. 
Primary 60K35; Secondary 60J75.}
\footnote{
{\textit{Key words and phrases}}. 
Interacting L\'{e}vy processes, infinitely particle systems, Dirichlet form, jump type,
logarithmic potential.}
\footnote{Partly supported by JSPS Grant-in-Aid for Scientific Research (A) No. 24244010. }} 
\author{Syota Esaki \thanks{Department of Mathematics ,Faculty of Science, Tokyo Institute of Technology, 152-8551,  Japan. email: esaki.s.ab@m.titech.ac.jp}}
\date{} 
\renewcommand\tilde{\widetilde}
\renewcommand\hat{\widehat}
\renewcommand\#{\sharp}
\def\R{\mathbb{R}}
\def\N{\mathbb{N}}
\def\e{\varepsilon}
\def\bda{\bm{a}}
\def\bdb{\bm{b}}
\def\P{{\mathbb P}}
\def\E{{\mathbb E}}
\def\frakM{{\mathfrak{M}}}
\def\sfX{{\Xi}}
\def\sfAn{\frakM_{\bda_n-\1}^{2(\bda_n)_++\1}}
\def\sfn{{\sf n}}
\def\sfm{{\sf m}}
\def\x{{\text{\boldmath $x$}}}
\def\1{{\bf 1}}
\newcommand{\chemu}{\check{\mu}}
\newcommand{\chesig}{\check{\sigma}}
\def\D{{\mathbb D}}
\def\Qr{U_{2^r}}
\newcommand{\chia}{\chi[\bda]}
\newcommand{\chian}{\chi[\bda_{n}]}
\def\Dc{\mathfrak{D}_{\circ}}
\def\Dinf{\mathfrak{D}_{\infty}}
\def\Eq{\mathfrak{E}^{\sfn, \sfm}_{r,\ell, q, k, \xi}}
\def\Eqban#1{\mathfrak{E}^{\sfn, \sfm, #1}_{r,\ell, q, k, \xi}}
\def\Eqm{\mathfrak{E}^{\sfm}_{\ell, q, k, \xi}}
\def\hiku#1{\langle #1 \rangle}
\def\munm{\check{\mu}_{r,\ell,k,\xi}^{\sf{n},\sf{m}}}
\newcommand{\bd}[1]{\mbox{\boldmath$#1$}}
\renewcommand\tilde{\widetilde}
\renewcommand\hat{\widehat}
\renewcommand\#{\sharp}
\newcommand\nadeffsa[2]{\nabla_{#1}^{#2}}
\begin{document}

\maketitle


\begin{abstract}
In this paper
a general theorem for constructing infinite particle systems of jump type with long range interactions is presented.  
It can be applied to the system that each particle undergoes an $\alpha$-stable process and interaction between particles is given by the logarithmic potential appearing random matrix theory or potentials of Ruelle's class with polynomial decay. It is shown that the system can be constructed for any $\alpha \in (0, 2)$ if its equilibrium measure $\mu$ is translation invariant, and $\alpha$ is restricted by the growth order of the 1-correlation function of the measure $\mu$ in general case.  
\end{abstract}

\section{Introduction} 

The studies of infinite particle systems 
with interaction were started from around 1970's by Spitzer \cite{Spi69} and Liggett \cite{Lig77,Lig85}. 
They constructed systems of particles moving on lattices (e.g. the square lattice $\mathbb{Z}^d$)
by means of Feller processes on the configuration spaces, 
which are compact with the product topology.
In this paper we discuss infinite particle systems of jump type with interaction
on continuum spaces. In the case where the continuum space is the $d$-dimensional Euclidean space $\mathbb{R}^d$,
the configuration space is represented as
$$
\frakM=\{ \xi = \sum_i \delta_{x_i}; \xi(K)<\infty\mbox{ for all compact sets $K \subset \R^d$} \},
$$
where $\delta_a$ stands for the delta measure at $a$. We endow $\frakM$ with the vague topology. Then $\frakM$ is a Polish space and $\mathfrak{N}\subset \mathfrak{M}$ is relative compact if and only if $\sup_{\xi\in\mathfrak{N}}\xi(K)<\infty$ for any compact set $K\subset\mathbb{R}^d$. 
For $x,y\in \mathbb{R}^d$ and $\xi \in \mathfrak{M}$, we write $\xi^{xy}=\xi-\delta_x+\delta_y$ and $\xi\setminus x = \xi-\delta_x$ if $\xi(\{x\})\ge 1$.

An infinite particle system of jump type is characterized by its rate function $c(\xi, x; y)$, $(\xi,x,y)\in\mathfrak{M}\times\R^d\times\R^d$, which controls the jump rate from $x$ to $y$ under the configuration $\xi$.
We consider, in this paper, the case that the rate function is given by 
$c(\xi, x; y) = 0$ if $\xi(\{x\})=0$,
and
\begin{equation*}
c(\xi, x; y) = \nu(\xi, x; y) + \nu(\xi^{xy}, y; x) \frac{d\mu_y}{d\mu_x} (\xi \setminus x)\frac{\rho^1(y)}{\rho^1(x)}, \quad \mbox{ if }\xi(\{x\})\ge 1, 
\end{equation*}
with some positive measurable function $\nu$ on $\frakM \times \R^d \times \R^d$ and some probability measure $\mu$ on $\frakM$.
Here, $\mu_x$ is the reduced Palm measure defined by $\mu_x = \mu\left( \left. \cdot - \delta_{x} \right| \xi(\{ x \}) \geq 1 \right)$ for $x \in \R^d$, 
$\rho^1(x)$ is the $1$-correlation function of $\mu$  and $d\mu_y/d\mu_x$ is the Radon-Nikodym derivative of $\mu_y$ with respect to $\mu_x$. 
We then introduce the linear operator $L_0$ on the space of local smooth functions $\mathfrak{D}_0$ in (\ref{D0}) defined by 
\begin{equation*}
L_0 f(\xi) = \int_{\R^d} \xi(dx) \int_{\R^d} dy \; c(\xi, x; y)[f(\xi^{x y})-f(\xi)], 
\end{equation*}
and the associated bilinear form $\mathfrak{E}$ on $\mathfrak{D}_\infty$ in (\ref{Dinfty}) given by
\begin{equation}
\mathfrak{E}(f, g) = \frac{1}{2} \int_{\frakM} d\mu \int_{\R^d} \xi(dx) \int_{\R^d} \nu(\xi, x; y)\{ f(\xi^{x y})-f(\xi) \} \{ g(\xi^{x y})-g(\xi) \} dy. \label{LigDiri}
\end{equation}
For $R > 0$, the subset $\frakM_R=\{ \xi \in \frakM$ ; $| x_i-x_j | \geq R$ for $i \neq j$  $\}$ of $\frakM$, which is regarded as the configuration of hard balls, is compact with the vague topology. Then by using a slight modification of Liggett's theorem \cite{Lig85}, we can construct the Feller process generated by the closure of $L_0$ describing an interacting particle system of hard balls moving by random jump on $\R^d$  under suitable assumptions on the rate function $c$ \cite{Tan89}.
In this situation the rate function $c$ satisfies the following detailed balance condition: 
\begin{equation*}
c(\xi^{x y}, y; x) = c(\xi, x; y) \frac{\rho^1(x)}{\rho^1(y)} \frac{d\mu_x}{d\mu_y}(\xi \setminus y), \quad x, y \in \R^d. 
\end{equation*}
Hence, we see that $\mu$ is a reversible measure of the process and the closure of the bilinear form $(\mathfrak{E}, \Dinf)$ is the Dirichlet form associated with it. 
However, the above argument by Liggett's theorem can not be applied to construct the process on $\frakM$, since $\frakM = \frakM_0$ is not locally compact. 

The diffusion processes on general Polish spaces, which may be non-locally compact, are constructed by the Dirichlet form theory (e.g. Kusuoka \cite{Kus82}, Ma-R\"{o}ckner \cite{MR},  Osada \cite{O96} and others). The infinite particle system of jump type with interaction was also constructed by Kondratiev-Lytvynov-R\"{o}ckner \cite{KLR}, Lytvynov-Ohlerich \cite{LO08}.
They treated the case that the reversible measure $\mu$ is a Gibbs measure in \cite{KLR} and a determinantal random point field in \cite{LO08}.
A determinatal random point field is associated with its correlation operator $K$ with 
$\mathrm{Spec}(K)\subset [0,1]$. Their result in \cite{LO08} excludes the case that $\mathrm{Spec}(K)$ contains $1$, 
which includes Sine, Airy, Bessel and Ginibre random point fields.

Let  $\Phi : \mathbb{R}^d \to \mathbb{R}\cup \{\infty\}$ be a self-potential and  $\Psi : \mathbb{R}^d\times\mathbb{R}^d\to \mathbb{R}\cup \{\infty\}$ an interaction potential with $\Psi(x,y)=\Psi(y,x)$.
Osada \cite{Osa13a,Osa13b} introduced a class of probability measures on $\frakM$ associated with $\Phi$ and $\Psi$, and called its element a {\it quasi-Gibbs measure} (Definition \ref{def 5.1}).
The class includes Gibbs measures of Ruelle's class and Sine, Airy, Bessel and Ginibre random point fields \cite{Osa13a,Osa13b}.
He constructed a diffusion process describing a system of infinite Brownian particles with the potentials $\Phi$ and $\Psi$ by Dirichlet form technique related to quasi-Gibbs measures, and showed in \cite{Osa12} that the diffusion process solves the infinite dimensional stochastic differential equation (ISDE) :
\begin{equation*} 
dX_j(t)=dB_j(t)-\frac{1}{2}\nabla\Phi(X_j(t))dt-\frac{1}{2}\sum_{\substack{k=1\\ k\not=j}}^{\infty}\nabla\Psi(X_j(t),X_k(t))dt, j\in\mathbb{N}.
\end{equation*}

It is an interesting and natural problem to extend Osada's results to infinite particle systems in which each particle undergoes a L\'{e}vy process of jump type (e.g. Cauchy process). 
In this paper we study the construction of the processes describing infinite particle systems of jumps with the potentials $\Phi$ and $\Psi$, by Dirichlet form technique.
The related infinite dimensional stochastic differential equations are treated in the forthcoming paper \cite{EsaSDE}.

We make some assumptions in Section 2. 
Assumption (A.1) is the closability of the bilinear form $(\mathfrak{E}, \Dinf)$. 
For a long range interaction potential $\Psi$, such as the $\log$-potential, $c(\xi, x;y)$ is not generally well-defined for some $(\xi, x; y)$ because of the divergence of $d\mu_y/d\mu_x$. The closability of the bilinear form $(\mathfrak{E}, \Dinf)$ ensure that $c$ is well-defined $\mu$-almost surely.
Theorem \ref{CQG} states that $(\mathfrak{E}, \Dinf)$ is closable 
if $\mu$ is a quasi-Gibbs measure with assumption (A.4).
We denote the closure of $(\mathfrak{E}, \Dinf)$ by $(\mathfrak{E}, \mathfrak{D})$.
 Theorem \ref{Theorem.1}  states that under assumptions (A.1)--(A.2) and (B.0)--(B.4), $(\mathfrak{E}, \mathfrak{D})$ is a quasi-regular Dirichlet form. Therefore there exists a special standard process $(\sfX_t, \mathbb{P}_{\xi})$ associated with $(\mathfrak{E}, \mathfrak{D})$. 
Reminding that $\mathfrak{N}\subset \mathfrak{M}$ is relative compact if and only if $\sup_{\xi\in\mathfrak{N}}\xi(K)<\infty$ for any compact set $K\subset\mathbb{R}^d$, we see that the quasi-regularity of the Dirichlet form $(\mathfrak{E}, \mathfrak{D})$ implies that 
for any compact set $K$,  $\Xi_t(K)$, the number of particles of the process in $K$, will not diverge to infinity for any time $t\ge 0$, even though a jump rate is a long range.

These assumptions are satisfied for the system of interacting $\alpha$-stable process $(\alpha \in (0, 2))$, if $\alpha$ is strictly greater than $\kappa$, where $\kappa$ is the growth order of the density (the $1$-correlation function) of $\mu$, that is,
$\rho^1(x)=O(|x|^\kappa)$, $|x|\to\infty$. In particular, if $\mu$ is translation invariant, then $\kappa=0$ and the system can be constructed for any parameter $\alpha\in (0,2)$. The condition that $\alpha> \kappa$ seems to be best possible because it is a necessary condition to construct the independent system of infinite $\alpha$-stable processes. 


This paper is organized as follows: In Section 2 we introduce some notations and state our main results, Theorems \ref{Theorem.1} and \ref{CQG} in this paper. We give applications of theorems in Section 3. We prove Theorem \ref{CQG} in Section 4 and Theorem \ref{Theorem.1} in Section 5.

\section{Setup and main results} 

Let $S$ be a closed set in $\R^d$ such that $0 \in S$ and $\overline{S^{{\rm{int}}}}=S$, where $S^{{\rm{int}}}$ denotes the interior of $S$. 
Let $\frakM$ be the configuration space over $S$ defined by
$$
\frakM=\frakM(S)=\{ \xi = \sum_i \delta_{x_i}; \xi(K)<\infty \mbox{ for all compact sets } K \subset S\}
$$ 
where $\delta_a$ stands for the delta measure at $a$.  $\frakM$ is a Polish space with the vague topology.  A probability measure $\mu$ on $\frakM$ is called a random point field. 

For $\sfn \in \{ 0 \} \cup \mathbb{N} \cup \{ \infty \}$ we put $\frakM_{\sfn} = \{ \xi \in \frakM ; \xi(S)=\sfn \}$ and introduce a map $\x_{\sfn} = (x_{\sfn}^1, x_{\sfn}^2, \ldots, x_{\sfn}^{\sfn}) ; \frakM_{\sfn} \to S^{\sfn}$ such that $\xi = \sum_{j=1}^{\sfn} \delta_{x_{\sfn}^j(\xi)}$. The map $\x_{\sfn}$ is called an $S^{\sfn}$-coordinate of $\xi$. We put $U_r = \{ x \in S ; |x| \leq r \}$ and
\begin{equation*}
\frakM_{r, \sfn} = \{ \xi \in \frakM ; \xi(U_r)={\sfn} \}. 
\end{equation*}
Note that $\frakM = \sum_{{\sfn}=0}^{\infty} \frakM_{r, \sfn}$. 
We define $\pi_r : \frakM \to \frakM$ by $\pi_r(\xi) = \xi(\cdot \cap U_r)$, and $\pi_r^c : \frakM \to \frakM$ by $\pi_r^c(\xi) = \xi( \cdot \cap \{ S \setminus U_r \})$.
A function $\x_{r, \sfn} : \frakM_{r, \sfn} \to U_r^{\sfn}$ is called a $U_r^{\sfn}$-coordinate (or a coordinate on $\frakM_{r, \sfn}$) of $\xi$ if 
\begin{equation*}
\pi_r(\xi) = \sum_{j=1}^{\sfn} \delta_{x_{r, \sfn}^j(\xi)}, \quad \x_{r, \sfn}(\xi)=(x_{r, \sfn}^1(\xi), \ldots, x_{r, \sfn}^{\sfn}(\xi)). 
\end{equation*}
 For $f : \frakM \to \mathbb{R}$ a function $f_{r, \xi}^{\sfn}(x) : \frakM \times U_r^{\sfn} \to \mathbb{R}$ is called the $U_r^{\sfn}$-representation of $f$ if $f_{r, \xi}^{\sfn}$ satisfies the following : 
\begin{enumerate}
\item[(1)] $f_{r, \xi}^{\sfn}(\cdot)$ is a permutation invariant function on $U_r^{\sfn}$ for each $\xi \in \frakM$. 
\item[(2)] $f_{r, \xi_{(1)}}^{\sfn}(\cdot) = f_{r, \xi_{(2)}}^{\sfn}(\cdot)$ if $\pi_r^c(\xi_{(1)}) = \pi_r^c(\xi_{(2)})$, $\xi_{(1)}, \xi_{(2)} \in \frakM_{r, \sfn}$. 
\item[(3)] $f_{r, \xi}^{\sfn}(\x_{r, \sfn}(\xi)) = f(\xi)$ for $\xi \in \frakM_{r, \sfn}$, where $\x_{r, \sfn}(\xi)$ is a $U_r^{\sfn}$-coordinate of $\xi$. 
\item[(4)] $f_{r, \xi}^{\sfn}(\cdot) = 0$ for $\xi \notin \frakM_{r, \sfn}$. 
\end{enumerate}
Note that $f_{r, \xi}^{\sfn}$ is uniquely determined and $f(\xi) = \sum_{{\sfn}=0}^\infty f_{r, \xi}^{\sfn}({\x}_{r, \sfn}(\xi))$. 
When $f$ is $\sigma[\pi_r]$-measurable, $U_r^{\sfn}$-representations are independent of $\xi$, and is denoted by $f_r^{\sfn}$ instead of $f_{r, \xi}^{\sfn}$. Let $\mathfrak{B}_r = \{ f : \frakM \to \mathbb{R} ; \text{ $f$ is $\sigma[\pi_r]$-measurable} \}$ and $\mathfrak{B}_r^{{\rm bdd}} = \{ f \in \mathfrak{B}_r ; \text{$f$ is bounded} \}$. 
We set 
\begin{equation*}
\mathfrak{B}_{\infty} = \bigcup_{r=1}^{\infty} \mathfrak{B}_r, \quad \quad 
\mathfrak{B}_{\infty}^{{\rm bdd}} = \bigcup_{r=1}^{\infty} \mathfrak{B}_r^{{\rm bdd}}, 
\end{equation*}
and call a function in $\mathfrak{B}_{\infty}$ a {\it local} function.
Then, we introduce the set of all {\it local smooth} functions on $\frakM$ given by
\begin{equation}
\label{D0}
\Dc = \{ f \in \mathfrak{B}_{\infty} ; f_{r, \xi}^{\sfn} \; \text{is smooth on} \; U_r^{\sfn} \; \text{for each} \; {\sfn}, r, \xi \}.
\end{equation}
Note that for any $f\in\Dc$ we can find $r\in \N$ such that 
$f(\xi)=\sum_{n=0}^{\infty}f_r^{\sfn}({\x}_{r, \sfn}(\xi))$. A local smooth function is continuous with the vague topology:
$\Dc \subset C(\frakM)$. 

For measurable functions $f^{\sfn}, g^{\sfn}$ on $S^{\sfn}$ we put  
\begin{equation*}
D^{\sfn}[f^{\sfn}, g^{\sfn}](\x_{\sfn}) = \frac{1}{2}\sum_{j=1}^{\sfn} \int_{S} \nadeffsa{j}{y}f^{\sfn}(\x_{\sfn}) \nadeffsa{j}{y}g^{\sfn}(\x_{\sfn}) \nu( \x_{\sfn}, x_j ; y)dy, 
\end{equation*}
where  
\begin{equation}\label{nabla}
\nadeffsa{j }{y} f^{\sfn}(\x_{\sfn}) = f^{\sfn}(x_1, \ldots, x_{j-1}, y, x_{j+1}, \ldots, x_{\sfn}) - f^{\sfn}(\x_{\sfn}), 
\quad j=1,2,\dots, \sfn,
\end{equation}
and $\nu( \x_{\sfn}, x_j ; y)$ is a nonnegative measurable function on $S^{\sfn}\times S \times S$ which is symmetric in $\x_{\sfn}$ and satisfies
\begin{equation*}
\int_S (1\wedge |y-x_j|^2)\nu (\x_{\sfn}, x_j;y)dy <\infty,
\quad \x_{\sfn} \in S^{\sfn}, \; j=1,2,\dots,\sfn.
\end{equation*}
We often write $\nu(\xi, x ; y)$ for $\nu(\x_{\sfn}(\xi), x ; y)$ in case $\xi \in \frakM_{\sfn}$.
Let $f, g \in \Dc$ with 
$f(\xi)=\sum_{n=0}^{\infty}f_r^{\sfn}({\x}_{r, \sfn}(\xi))$ and $g(\xi)=\sum_{n=0}^{\infty}g_r^{\sfn}({\x}_{r, \sfn}(\xi))$.
We remark that although $f_r^{\sfn}$ and $g_r^{\sfn}$ are functions of $U_r^{\sfn}$, they are naturally extended to the functions $f^{\sf{n}}$ and $g^{\sf{n}}$ on $S^n$ satisfying
$$
f^{\sf{n}}({\x}_{\sf{n}})=f_r^{\sf{m}}(\hat{\x}_{\sf{m}}),
\quad
g^{\sf{n}}({\x}_{\sf{n}})=g_r^{\sf{m}}(\hat{\x}_{\sf{m}}), \quad
\mbox{ if }\sum_{i:x_i\in U_r}\delta_{x_i}= \sum_{i:\hat{x}_i\in U_r}\delta_{\hat{x}_i}.
$$
Then we introduce the square field defined by
$$
\D[f, g](\xi)=\lim_{\ell\to\infty} \D[f, g](\pi_\ell(\xi)) 
$$
with
\begin{equation*}
\D[f, g](\pi_\ell(\xi)) = \begin{cases} D^{\sfn}[f^{\sfn}, g^{\sfn}](\x_{\ell, \sfn}(\xi)) &\text{for $\xi \in \frakM_{\ell,\sfn}, \sfn \in \N$}, \\
0 &\text{for $\xi \in \frakM_{\ell,0}$}, \end{cases}
\end{equation*}
and the bilinear form defined by
\begin{equation}
\label{Dinfty}
\begin{split}
\mathfrak{E}(f, g) &= \int_{\frakM} \D[f, g](\xi)\mu(d\xi), \\ 
\Dinf &= \{ f \in \Dc \cap L^2(\frakM, \mu) ; \mathfrak{E}(f, f) < \infty \}. 
\end{split}
\end{equation}

We say a nonnegative permutation invariant function $\rho^{\sfn}$ on $S^{\sfn}$ is the $\sfn$-correlation function of $\mu$ if 
\begin{equation*}
\label{corrdef} \int_{A_1^{k_1} \times \cdots \times A_m^{k_m}} \rho^{\sfn}(\x_{\sfn})d\x_{\sfn} = \int_{\frakM} \prod_{i=1}^m \frac{\xi(A_i)!}{(\xi(A_i)-k_i)!}\mu(d\xi)
\end{equation*}
for any sequence of disjoint bounded measurable subsets $A_1, \ldots, A_m \subset S$ and a sequence of natural numbers $k_1, \ldots, k_m$ satisfying $k_1+\cdots+k_m=\sfn$. 

Permutation invariant functions $\sigma_r^{\sfn} : U_r^{\sfn} \to [0,\infty)$ are called density functions of $\mu$ if
\begin{equation*}
\frac{1}{{\sfn}!}\int_{U_r^{\sfn}}f_r^{\sfn}(\x_{\sfn})\sigma_r^{\sfn}(\x_{\sfn})d\x_{\sfn} = \int_{\frakM_{r, \sfn}}f(\xi)\mu(d\xi) \; \text{for all bounded $\sigma[\pi_r]$-measurable functions $f$}. \label{0.5}
\end{equation*}

We introduce conditions (A.1)--(A.2):
\begin{enumerate}
\item[(A.1)] $(\mathfrak{E}, \Dinf)$ is closable on $L^2(\frakM, \mu)$, 
\item[(A.2)] $\sigma_r^k \in L^{p}(U_r^k, dx)$ for all $k, r \in \N$ with some $1<p \leq \infty$. 
\end{enumerate}
Under condition (A.1) we denote the closure of $(\mathfrak{E}, \Dinf)$ by $(\mathfrak{E}, \mathfrak{D})$. 

We also introduce conditions (B.0)--(B.4): 
\begin{enumerate}
\item[(B.0)] There exists a function $p(r)$ on $(0, \infty)$ such that $\nu(\xi, x; y) \leq C_1 p(|x-y|)$ for $\mu$-a.s. $\xi \in \frakM$ and $dx$-a.e. $x, y \in S$. 
\item[(B.1)] $\rho^1(x) = O\left( |x|^{\kappa} \right)$ as $|x| \to \infty$ for some $\kappa \geq 0$. 
\item[(B.2)] $p(r) = O(r^{-(d+\alpha)})$ as $r \to \infty$ for some $\alpha > \kappa$. 
\item[(B.3)] $p(r) = O(r^{-(d+\beta)})$ as $r \to +0$ for some $0 < \beta < 2$. 
\item[(B.4)] $\displaystyle \dfrac{{\rm Var}\left[ \xi(U_r) \right]}{\left( \E \left[ \xi(U_r) \right] \right)^2} = O\left( r^{-\delta} \right)$ as $r \to \infty$ for some $\delta>0$.  
\end{enumerate}

We remark that the LHS of (B.4) is rewritten by the $1$ and $2$-correlation functions of $\mu$ as follows: 
\begin{equation*}
\dfrac{{\rm Var}\left[ \xi(U_r) \right]}{\left( \E \left[ \xi(U_r) \right] \right)^2} = \frac{\int_{U_r} \rho^1(x)dx - \int_{U_r^2} \left( \rho^1(x_1)\rho^1(x_2) - \rho^2(x_1, x_2) \right) dx_1dx_2}{\left( \int_{U_r} \rho^1(x) dx \right)^2} . 
\end{equation*}
From the above expression we can readily check that (B.4) holds if $\mu$ is a Poisson random point field  or a determinantal random point field.

Now we state the main theorem.
Please refer to \cite{FOT,MR} for the definition of the quasi-regularity.  

\begin{theorem} \label{Theorem.1}
Suppose that (A.1)--(A.2), (B.0)--(B.4) hold. Then $(\mathfrak{E}, \mathfrak{D})$ is a quasi-regular Dirichlet form on $L^2(\frakM, \mu)$. 
\end{theorem}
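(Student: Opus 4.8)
# Proof Proposal for Theorem 2.1 (Quasi-regularity)

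The plan is to verify the three standard conditions for quasi-regularity of a Dirichlet form on $L^2(\frakM, \mu)$ as stated in \cite{MR}: (i) existence of an $\mathfrak{E}$-nest consisting of compact sets; (ii) the existence of an $\mathfrak{E}_1$-dense subset of $\mathfrak{D}$ whose elements have $\mathfrak{E}$-quasi-continuous $\mu$-versions; (iii) the existence of a countable family in $\mathfrak{D}$ that separates points of $\frakM$ outside some $\mathfrak{E}$-exceptional set. Condition (iii) is the easy one: since $\frakM$ is Polish, a countable family of local smooth functions (built from smooth approximations of $\xi \mapsto \xi(A)$ for $A$ ranging over a countable base) lies in $\Dc$ and separates points; one checks these belong to $\mathfrak{D}$ using (A.2) and (B.0)--(B.3) to bound $\mathfrak{E}(f,f)$. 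Condition (ii) follows from (i) together with the observation that local smooth functions are vague-continuous ($\Dc \subset C(\frakM)$), hence automatically quasi-continuous once the nest of (i) is in hand.

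The heart of the matter is therefore (i): constructing an increasing sequence of compact sets $F_\ell \subset \frakM$ with $\mathrm{Cap}(\frakM \setminus F_\ell) \to 0$. Recalling that $\mathfrak{N} \subset \frakM$ is relatively compact iff $\sup_{\xi \in \mathfrak{N}} \xi(K) < \infty$ for every compact $K$, the natural candidates are sets of the form
\begin{equation*}
F = \{ \xi \in \frakM ; \xi(U_r) \leq a_r \text{ for all } r \in \N \}
\end{equation*}
for a suitable sequence $a_r \to \infty$ (growing faster than the typical order $\E[\xi(U_r)] \sim \int_{U_r}\rho^1$). The key estimate is a \emph{capacity bound} of the form $\mathrm{Cap}(\{\xi : \xi(U_r) > a_r\}) \leq C(\, \mu(\xi(U_r) > a_r) + \mathfrak{E}\text{-energy of a cutoff function}\,)$. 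To produce this, I would fix for each $r$ a cutoff function $\varphi_{r}(\xi) = \chi(\xi(U_r))$ with $\chi$ smooth, $\chi = 1$ on $[a_r, \infty)$, $\chi = 0$ on $[0, a_r/2]$, regularized so that $\varphi_r \in \Dc$; then $\mathrm{Cap}(\xi(U_r) > a_r) \leq \mathfrak{E}_1(\varphi_r, \varphi_r)$. Using the definition (\ref{Dinfty}) of $\mathfrak{E}$ and (B.0), the energy term is controlled by $\int_{\frakM} \mu(d\xi) \sum_{x_j \in U_r} \int_S (1 \wedge |\nabla^y_j \varphi_r|^2) \, C_1 p(|x_j - y|)\, dy$; since $\varphi_r$ only changes when a particle crosses $\partial U_r$, this is bounded by $C \int_{\frakM}\xi(U_{r+1}\setminus U_{r-1})\,\|\chi'\|_\infty^2\, \mu(d\xi)$ times the jump mass, and one invokes (B.3) for the short-range part and (B.2) for the long-range part to get finiteness, with (B.1) controlling the growth in $r$. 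Simultaneously, (B.4) and Chebyshev give $\mu(\xi(U_r) > a_r) \leq \mathrm{Var}[\xi(U_r)]/(a_r - \E[\xi(U_r)])^2 = O(r^{-\delta})$ for $a_r$ chosen as a fixed multiple of $\E[\xi(U_r)]$, which is summable over a subsequence $r = r_n$. Summing the capacity bounds over such a subsequence and taking $F_\ell = \bigcap_{n \geq \ell}\{\xi(U_{r_n}) \leq a_{r_n}\}$ yields the desired compact nest.

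I expect the main obstacle to be the precise handling of the long-range jump contribution to $\mathfrak{E}_1(\varphi_r, \varphi_r)$: because jumps can move a particle from far outside $U_r$ to inside (or vice versa), the energy of the cutoff $\varphi_r$ picks up a term $\int \mu(d\xi) \int_{U_r^c} \xi(dx) \int_{U_r} p(|x-y|)\,dy$, whose finiteness and decay in $r$ is exactly where the hypothesis $\alpha > \kappa$ in (B.2), combined with $\rho^1(x) = O(|x|^\kappa)$ from (B.1), becomes essential — this is the quantitative version of the remark that $\alpha > \kappa$ is needed even for the independent $\alpha$-stable system. The bookkeeping is to split $U_r^c = \bigsqcup_{m \geq 1}(U_{2^m r}\setminus U_{2^{m-1}r})$, estimate the contribution of annulus $m$ by (number of particles there) $\times$ (jump rate into $U_r$) $\lesssim (2^m r)^{d+\kappa} \cdot r^d (2^{m-1}r)^{-(d+\alpha)}$, and sum the geometric-type series, which converges precisely because $d + \kappa < d + \alpha$. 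Once this estimate is in place with the right power of $r$, the nest construction and hence quasi-regularity follow routinely.
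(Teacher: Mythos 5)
Your roadmap captures the essential structure of the paper's argument: compactness of sets $\frakM[\bda]=\{\xi : \xi(U_{2^r}) \le a_r \text{ for all } r\}$, cutoff functions whose Dirichlet energy is controlled, Chebyshev/Borel--Cantelli from (B.4) to show these sets exhaust $\mu$-almost all of $\frakM$, and a dyadic annuli decomposition for the long-range jump term with $\alpha > \kappa$ (from (B.1)--(B.2)) supplying convergence. The paper proceeds the same way, via Lemmas \ref{teiin}, \ref{chiprop}, \ref{lem357}, \ref{lem358}, \ref{chiadomain}.

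The route differs in one structural respect. You propose a per-scale cutoff $\varphi_r$ for each shell $\{\xi(U_r)>a_r\}$, bound $\mathrm{Cap}(\{\xi(U_r)>a_r\}) \le \mathfrak{E}_1(\varphi_r,\varphi_r)$, and sum over a subsequence using subadditivity of capacity. The paper instead builds a \emph{single} multiscale cutoff $\chia = \rho\circ d_{\bda}$ that equals $1$ on $\frakM[\bda]$ and $0$ outside $\frakM[2\bda_+]$, then shows $\chian f \in \mathfrak{D}$ and $\|(1-\chian)f\|_1 \to 0$ for $f\in\Dinf$ (Lemma \ref{chiadomain}); this makes $\mathfrak{D}_{\rm cut}=\{\chian f\}$ dense, which directly yields the nest $\{\frakM[2(\bda_n)_+]\}$. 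Your modular per-scale version is viable for (C.1) but you would still need a density-type argument for the remaining (C.2)--(C.3); the paper's $\chian$ does double duty.

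One point you should make more precise: the cutoff $\varphi_r(\xi)=\chi(\xi(U_r))$ as literally written is a function of an integer-valued count, so smoothing $\chi$ does nothing — a single particle jumping across $\partial U_r$ changes $\varphi_r$ by a full $\|\chi'\|_\infty$, whatever the jump length. The short-range energy then diverges for $\beta\ge 1$: a particle at distance $t$ from $\partial U_r$ sees $\int_{|y-x|<1,\,\text{opposite side}} |y-x|^{-d-\beta}dy \sim t^{-\beta}$, and integrating over $t\in(0,1)$ fails. What is actually needed (and what your parenthetical ``$(1\wedge|\nabla^y_j\varphi_r|^2)$'' suggests you intend) is to smooth the \emph{spatial} indicator, replacing $\xi(U_r)$ by $\langle\phi_r,\xi\rangle$ with $\phi_r$ smooth, so the cutoff changes by $O(\|\nabla\phi_r\|_\infty |x-y|)$ for short jumps, giving the $\int|z|^2 p(|z|)dz$ integral convergent for all $\beta<2$. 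The paper bakes this in directly: the tent weight $(2^r-|x_j|)\wedge 2^{r-1}$ in $d_{\bda}$ is precisely this spatial grading, and the proof of Lemma \ref{lem358} exploits it via $|d_{\bda_n}(\xi^{x_i,y})-d_{\bda_n}(\xi)| \lesssim |y-x_i|/(2^{r}a_{n,r})$ in the short-range term $I_1$. With that correction made explicit, your argument and the paper's are in close agreement.
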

 
By virtue of \cite[Theorem IV.3.5 and Theorem IV.5.1]{MR} we get the following: 
\begin{cor}
Suppose that (A.1)--(A.2), (B.0)--(B.4) hold. Then there exists a special standard process $(\Xi_t, \{ \mathbb{P}_{\xi} \}_{\xi \in \frakM})$ associated with 
$((\mathfrak{E}, \mathfrak{D}), L^2(\frakM, \mu))$. 
Moreover the process is reversible with respect to the measure $\mu$. 
\end{cor}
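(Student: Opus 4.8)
The plan is to verify the conditions for quasi-regularity of a Dirichlet form as formulated in \cite[Chapter IV]{MR}, namely: (QR1) there exists an $\mathfrak{E}_1$-nest consisting of compact sets; (QR2) there exists an $\mathfrak{E}_1$-dense subset of $\mathfrak{D}$ whose elements have $\mathfrak{E}$-quasi-continuous $\mu$-versions; and (QR3) there exist a countable family $(u_n)\subset\mathfrak{D}$ of $\mathfrak{E}$-quasi-continuous functions and an $\mathfrak{E}$-exceptional set $N$ such that $(u_n)$ separates points of $\frakM\setminus N$. Since $\Dc\subset C(\frakM)$ and $\Dinf$ is $\mathfrak{E}_1$-dense in $\mathfrak{D}$ by construction (as the closure), (QR2) is immediate: every element of $\Dinf$ is already continuous, hence quasi-continuous. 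For (QR3), one uses that $\frakM$ with the vague topology is a Polish space, so its Borel structure is countably generated; concretely, one can take a countable family of local smooth functions of the form $\xi\mapsto\langle\varphi,\xi\rangle=\int\varphi\,d\xi$ with $\varphi$ ranging over a countable dense subset of $C_c(S)$, which separates points of $\frakM$ globally (not just off an exceptional set). One must check these belong to $\mathfrak{D}$, i.e. lie in $L^2(\mu)$ and have finite $\mathfrak{E}$-energy; the former follows from (A.2) (or a moment bound on $\rho^1$ from (B.1)), and the latter from (B.0), (B.2), (B.3) together with the smoothness and compact support of $\varphi$, which make $D^{\sfn}[\cdot,\cdot]$ integrable against the jump kernel.

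The crux of the argument — and the step where (B.0)--(B.4) really enter — is (QR1): producing an $\mathfrak{E}_1$-nest of compact subsets of $\frakM$. Recall that $\mathfrak{N}\subset\frakM$ is relatively compact iff $\sup_{\xi\in\mathfrak{N}}\xi(U_r)<\infty$ for every $r$. So the natural candidates are sets of the form $K_{\mathbf{b}}=\{\xi\in\frakM:\xi(U_r)\le b_r\text{ for all }r\}$ for suitable increasing sequences $\mathbf{b}=(b_r)$, and one wants to choose the $b_r$ growing fast enough that $\mathrm{Cap}(\frakM\setminus K_{\mathbf{b}})$ is small, equivalently that as $\mathbf{b}\uparrow\infty$ these capacities tend to $0$. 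I would proceed as in Osada's work on quasi-Gibbs measures: for each $r$, build a cut-off function $\chi_r\in\mathfrak{D}$ with $\chi_r=1$ on $\{\xi(U_r)\ge b_r\}$ and estimate $\mathfrak{E}_1(\chi_r,\chi_r)$ from above. The key quantity to control is the energy cost of a configuration having many particles in $U_r$: a particle at $x$ with $|x|$ large jumps into $U_r$ at rate governed by $\nu(\xi,x;y)\le C_1 p(|x-y|)=O(|x-y|^{-(d+\alpha)})$ by (B.0) and (B.2), and the expected number of such candidate particles is controlled by $\int_{|x|>r}\rho^1(x)\,|x|^{-(d+\alpha)}dx$, which converges precisely because $\alpha>\kappa$ in (B.1)--(B.2). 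Meanwhile the fluctuation of $\xi(U_r)$ around its mean $\E[\xi(U_r)]\sim c r^d$ is small by (B.4): $\mathrm{Var}[\xi(U_r)]=O(r^{d-\delta}\cdot r^d)$, so by Chebyshev $\mu(\xi(U_r)\ge 2\E[\xi(U_r)])=O(r^{-\delta})$, and choosing $b_r=2\E[\xi(U_r)]$ makes the measures — and, with the energy estimate, the capacities — of the complements summable in $r$. Condition (B.3) is used to ensure the short-range part of the jump kernel is integrable (it guarantees $\int_S(1\wedge|y-x|^2)\nu\,dy<\infty$-type bounds survive the cut-off construction), so that $\chi_r$ genuinely lies in $\Dinf$ and has finite energy.

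Assembling these: set $F_N=\bigcap_{r=1}^{\infty}\{\xi:\xi(U_r)\le b_{r,N}\}$ with $b_{r,N}$ chosen so that $\sum_r\mathrm{Cap}(\{\xi(U_r)>b_{r,N}\})\le 2^{-N}$ (possible by the estimates above, made uniform by taking $b_{r,N}$ large in $N$); each $F_N$ is closed with $\sup_{\xi\in F_N}\xi(U_r)<\infty$, hence compact, and $\mathrm{Cap}(\frakM\setminus F_N)\to 0$, so $(F_N)$ is the desired nest. This gives (QR1); combined with (QR2) and (QR3) above, \cite[Definition IV.3.1]{MR} yields that $(\mathfrak{E},\mathfrak{D})$ is quasi-regular.

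The main obstacle I anticipate is the capacity (as opposed to measure) estimate on $\{\xi(U_r)>b_r\}$: Chebyshev only controls $\mu$-measure, whereas quasi-regularity needs capacity control, and capacity is governed by the energy form, so one genuinely needs the Dirichlet-form energy bound on the cut-off functions $\chi_r$ — this is where the interplay of (B.0), (B.2), (B.3) is delicate, since the long-range jump kernel means a particle arbitrarily far from $U_r$ contributes to the energy of $\chi_r$, and one must show these contributions sum to something finite using $\alpha>\kappa$. A secondary technical point is checking that the approximating cut-offs and separating functions actually lie in $\Dc\cap L^2(\mu)$ with finite $\mathfrak{E}$, which requires (A.2) for the $L^2$ membership and a careful truncation argument (replacing $U_r$-indicators by smooth local functions) to stay inside $\Dinf$ before passing to the closure $\mathfrak{D}$.
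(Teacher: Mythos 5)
Your proposal addresses the wrong target. The statement is a \emph{corollary} of Theorem \ref{Theorem.1}: the paper's proof is one line, namely to apply \cite[Theorem IV.3.5 and Theorem IV.5.1]{MR} to the quasi-regular Dirichlet form $(\mathfrak{E},\mathfrak{D})$ that Theorem \ref{Theorem.1} provides, obtaining the special standard process $(\Xi_t,\{\mathbb{P}_\xi\})$, and then to note that reversibility follows from the symmetry of the form. Your attempt instead re-derives the quasi-regularity of $(\mathfrak{E},\mathfrak{D})$ --- that is, it re-proves (a sketch of) Theorem \ref{Theorem.1} itself --- and then stops. Even granting that sketch, you never take the step from ``quasi-regular Dirichlet form'' to ``associated special standard process,'' and you never mention reversibility at all, which is part of what the corollary asserts. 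Those are the two logical deductions the corollary's proof actually consists of, and both are missing.

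A secondary gap in the re-derivation: you check (QR1)--(QR3) but never verify that $(\mathfrak{E},\mathfrak{D})$ is Markovian, i.e.\ that the closed form is actually a Dirichlet form. This is not automatic and is established separately in the paper (via a mean-value-theorem argument showing $\mathfrak{E}(\varphi_\varepsilon\circ f,\varphi_\varepsilon\circ f)\leq\mathfrak{E}(f,f)$ with a smooth normal contraction $\varphi_\varepsilon$); quasi-regularity is a notion defined for Dirichlet forms, so without the Markov property the verification of (QR1)--(QR3) does not yet yield the hypothesis needed to invoke the Ma--R\"ockner existence theorem. As for the (QR1) sketch itself, it is at the right level of ideas and roughly parallels the paper's Section~5 argument (Chebyshev plus (B.4) for measure control, a cut-off function whose energy is bounded via (B.0), (B.2), (B.3) and $\alpha>\kappa$), but the crucial energy estimate (the analogue of Lemma \ref{lem358}) is only described, not carried out, so the capacity bound on which the nest construction rests is not actually established. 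In short: to prove the corollary you should cite Theorem \ref{Theorem.1}, then cite \cite[Theorems IV.3.5, IV.5.1]{MR}, then observe that a symmetric form yields a $\mu$-reversible process; the material in your proposal belongs instead to the proof of Theorem \ref{Theorem.1}, and even there is incomplete.
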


\begin{remark}
(i) \quad A function $F$ on $\frakM$ is called a polynomial function if $F$ is written as 
\begin{equation*}
F(\xi) = Q(\langle \phi_1, \xi \rangle, \langle \phi_2, \xi \rangle, \ldots, \langle \phi_{\ell}, \xi \rangle)
\end{equation*}
with $\phi_k \in C_0^{\infty}(S)$ and a polynomial function $Q$ on $\R^{\ell}$, where $\langle \phi, \xi \rangle = \int_S \phi(x) \xi(dx)$ and $C_0^{\infty}(S)$ is the set of smooth functions with compact support. 
We denote the set of all polynomial function on $\frakM$ by $\mathfrak{A}$.
By the same argument in \cite{OsaTanCore} we see that Theorem \ref{Theorem.1} holds if we replace $\Dc$ by $\mathfrak{A}$ in the definition of $\mathcal{D}_\infty$. 

\noindent (ii) \quad Conditions (B.1)--(B.3) imply that there exists a constant $R>0$ such that
\begin{equation}
\int_{S} dx \rho^1(x) \int_A p(|x-y|) dy \leq R \int_A \rho^1(y) dy, \label{rhojump}
\end{equation}
for any compact subset $A$. The property (\ref{rhojump}) is necessary for constructing the system of independent particles of jump type. We expect that we can replace condition (B.2) to (\ref{rhojump}) 
in Theorem  \ref{Theorem.1}.
\end{remark} 

We introduce a Hamiltonian on a bounded Borel set $A$ as follows: 
For Borel measurable functions $\Phi: S \to \mathbb{R} \cup \{ \infty \}$ and $\Psi: S \times S \to \R \cup \{ \infty \}$ with $\Psi(x, y) = \Psi(y, x)$, let
\begin{equation*}
\mathcal{H}_A^{\Phi, \Psi}(\x) = \sum_{x_i \in A}\Phi(x_i)+\sum_{x_i, x_j \in A, i<j} \Psi(x_i, x_j), \quad \text{where} \; (x_1, \ldots, x_{\sfn}) \in S^{\sfn}.
\end{equation*}
We assume $\Phi<\infty$ a.e. to avoid triviality. For two measures $\nu_1$, $\nu_2$ on a measurable space $(\Omega, \mathcal{B})$ we write $\nu_1 \leq \nu_2$ if $\nu_1(A) \leq \nu_2(A)$ for all $A \in \mathcal{B}$. Suppose that $\Omega$ is a topological space and $\mathcal{B}$ is the topological Borel field. We say a sequence of finite Radon measures $\{ \nu^N \}$ on $\Omega$ convergence weakly to a finite Radon measure $\nu$ if $\lim_{N \to \infty} \int fd\nu^N = \int fd\nu$ for any bounded continuous function $f$ on $\Omega$. 
Then we give the definition of quasi-Gibbs measures \cite{Osa13a, Osa13b}. 

\begin{defi} \label{def 5.1}
A probability measure $\mu$ is said to be a $(\Phi, \Psi)$-quasi Gibbs measure if there exists an increasing sequence $\{ b_r \}$ of natural numbers and measures $\{ \mu_{r, k}^\sfn \}$ such that, for each $r, \sfn \in \mathbb{N}$, $\mu_{r, k}^\sfn$ and $\mu_r^\sfn := \mu(\cdot \cap \frakM_{b_r, \sfn})$ satisfy
\begin{equation*}
\mu_{r, k}^\sfn \leq \mu_{r, k+1}^\sfn \; \text{for all $k$}, \quad \lim_{k \to \infty} \mu_{r, k}^\sfn = \mu_r^\sfn \quad \text{weakly}, 
\end{equation*}
and that, for all $r, \sfn, k \in \mathbb{N}$ and for $\mu_{r, k}^\sfn$-a.e. $\xi \in \frakM$, 
\begin{equation} \label{5.3}
C_2^{-1} e^{-\mathcal{H}_r(\x)}\1 ( \x \in \frakM_{b_r, \sfn}) \Lambda(d\x) \leq \mu_{r, k, \xi}^\sfn(d\x) \leq C_2 e^{-\mathcal{H}_r(\x)}\1 (\x \in \frakM_{b_r, \sfn}) \Lambda(d\x). 
\end{equation}
Here we set 
\begin{equation*}
\1(\ast) = \begin{cases} 1 &\text{if the proposition $\ast$ is correct,} \\ 0 &\text{otherwise}, \end{cases} 
\end{equation*}
$\mathcal{H}_r(\x) = \mathcal{H}_{U_{b_r}}^{\Phi, \Psi}(\x)$, $C_2$ is a positive constant depending on $r, \sfn, k, \pi_{U_{b_r}^c}(\xi)$, $\Lambda$ is the Poisson random point field whose intensity is the Lebesgue measure on $S$, and $\mu_{r, k, \xi}^\sfn$ is the conditional probability measure of $\mu_{r, k}^\sfn$ defined by
\begin{equation} \label{5.4}
\mu_{r, k, \xi}^\sfn(d\x)=\mu_{r, k}^\sfn(\pi_{U_{b_r}} \in d\x|\pi_{U_{b_r}^c}(\xi)). 
\end{equation}
\end{defi} 

We remark that $(\Phi, \Psi)$-canonical Gibbs measures are $(\Phi, \Psi)$-quasi Gibbs measures. 
The converse is not always true. For example, Sine random point field, Ginibre random point field and Airy random point field are not canonical Gibbs measures but quasi Gibbs measures (\cite{Osa13a,Osa13b}). We introduce some assumptions. 
\begin{enumerate}
\item[(A.3)] $\mu$ is a $(\Phi, \Psi)$-quasi Gibbs measure. 
\item[(A.4)] There exist upper semi-continuous functions $\Phi_0: S \to \R \cup \{ \infty \}$, $\Psi_0: S \times S \to \R \cup \{ \infty \}$ and positive constants $C_3$ and $C_4$ such that 
\begin{equation*}
C_3^{-1}\Phi_0(s) \leq \Phi(s) \leq C_3\Phi_0(s), \quad \text{for all $s \in \R^d$, } 
\end{equation*}
\begin{equation*}
C_4^{-1}\Psi_0(s, t) \leq \Psi(s, t) \leq C_4\Psi_0(s, t), \quad \text{for all $s, t \in \R^d$}. 
\end{equation*}
\end{enumerate}
Then we give a sufficient condition for closability.

\begin{theorem} \label{CQG}
Assume (A.3) and (A.4). Then $(\mathfrak{E}, \Dinf)$ is closable. 
\end{theorem}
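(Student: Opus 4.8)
The plan is to reduce the closability of $(\mathfrak{E}, \Dinf)$ on the full configuration space to a local statement about configurations inside a large ball $U_{b_r}$, where the quasi-Gibbs structure supplies an explicit comparison of $\mu$ with a Poisson-like reference measure. First I would recall the standard strategy for proving closability of bilinear forms of jump type (Beurling--Deny type forms): it suffices to show that $\mathfrak{E}$ is a Dirichlet form obtained by integrating a non-negative symmetric kernel, and that no ``mass leaks'' when approximating. Concretely, I would fix $r$ and $\sfn$ and work on $\frakM_{b_r, \sfn}$, using the $U_{b_r}^{\sfn}$-coordinates to identify $\sigma[\pi_{b_r}]$-measurable local functions with permutation-invariant functions on $U_{b_r}^{\sfn}$. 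On this finite-dimensional piece the form becomes
\begin{equation*}
\frac{1}{2}\int \mu_{r,\xi}^{\sfn}(d\x)\sum_{j=1}^{\sfn}\int_S \nadeffsa{j}{y}f(\x)\,\nadeffsa{j}{y}g(\x)\,\nu(\x, x_j; y)\,dy,
\end{equation*}
a jump-type form in finitely many variables with an underlying reference measure that, by (\ref{5.3}), is sandwiched between $C_2^{\pm 1} e^{-\mathcal{H}_r(\x)}\1(\x\in\frakM_{b_r,\sfn})\Lambda(d\x)$.

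The key steps, in order: (i) Reduce to $\sigma[\pi_{b_r}]$-measurable functions. For general $f\in\Dinf$ one conditions on $\pi_{b_r}^c$; since the form only involves jumps of particles inside the ball (the $\nu$-integral moves $x_j$, and after a jump the relevant representation is still governed by the same outside configuration), it is enough to prove closability fibrewise for fixed outside configuration $\pi_{b_r}^c(\xi)$ and then integrate. (ii) On each fibre $\frakM_{b_r,\sfn}$, use the lower bound in (\ref{5.3}) together with upper semi-continuity of $\Phi_0,\Psi_0$ from (A.4): upper semi-continuity of the Hamiltonian makes $e^{-\mathcal{H}_r}$ lower semi-continuous, hence locally bounded below on the (relatively compact in the hard-core sense) region, and in particular the reference density is bounded below by a positive constant on any set where the configuration is ``spread out''. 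Then one invokes a known closability criterion for jump forms with respect to a measure comparable to a density that is bounded below (e.g.\ the classical result that $\int |\nabla^j_y f|^2 \nu\, dy\, \rho\, d\x$ is closable when $\rho$ is bounded away from $0$ and $\infty$ on compacts, which follows from Fatou/lower semicontinuity applied to an $\mathfrak{E}$-Cauchy sequence converging to $0$ in $L^2$). (iii) Pass from the $\mu_{r,k}^{\sfn}$ to $\mu_r^{\sfn}$ using the monotone approximation $\mu_{r,k}^{\sfn}\uparrow\mu_r^{\sfn}$ and monotone convergence, so closability survives the weak limit. (iv) Sum over $\sfn$ and integrate over the outside configuration, using $\mu=\sum_{\sfn}\mu_r^{\sfn}$, and finally let $r\to\infty$: an $\mathfrak{E}$-Cauchy sequence $\{f_m\}\subset\Dinf$ with $f_m\to 0$ in $L^2(\mu)$ is, for each fixed $r$, Cauchy for the $r$-truncated form, hence its $r$-part of the energy vanishes; monotonicity in $r$ then forces $\mathfrak{E}(f_m,f_m)\to 0$.

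The main obstacle I anticipate is step (ii): controlling the reference density near the ``boundary'' of configuration space, i.e.\ where two particles collide or where a particle exits $U_{b_r}$, and where $e^{-\mathcal{H}_r}$ may vanish (if $\Psi$ has a hard core or logarithmic singularity) or blow up. The use of upper semi-continuity of $\Phi_0,\Psi_0$ in (A.4) is precisely designed to handle this: it guarantees that $e^{-\mathcal{H}_r}$ does not have bad oscillations and is lower semi-continuous, so that on any compact subset of the ``good'' region it is bounded below, while the singular region (particle collisions / the set $\{\Psi=\infty\}$) is $\Lambda$-negligible and also $\mu$-negligible, so it does not affect the $L^2$ spaces. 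One has to be careful that the square field $\D[f,g]$ is defined via the limit over $\ell$, so I would first check that for $f\in\Dinf$ the truncated energies $\int \D[f,f](\pi_\ell(\xi))\mu(d\xi)$ increase to $\mathfrak{E}(f,f)$, legitimizing the reduction to each finite $\ell=b_r$. The remaining arguments — the fibrewise closability and the monotone/weak-limit passages — are routine once the density comparison is in place; I would cite the analogous closability computation in Osada's quasi-Gibbs papers and adapt the Dirichlet (jump) form estimates from the Beurling--Deny representation, replacing the gradient by the difference operator $\nadeffsa{j}{y}$ and the Lebesgue measure by $\nu(\x,x_j;y)\,dy$, which is a Lévy-type kernel by the assumed integrability of $(1\wedge|y-x_j|^2)\nu$.
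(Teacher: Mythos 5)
Your high-level architecture (localize to finite volume $U_{b_\ell}$ and a fixed particle number, use the quasi-Gibbs sandwich $(\ref{5.3})$ to compare with a reference density, pass to increasing limits in the cutoff parameters and finally in $r,\ell$) does match the paper's. But the load-bearing step is exactly your step (ii), and there you have a genuine gap, which the paper resolves by a different and cleaner mechanism that you miss.

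You propose to establish closability of the localized form by ``a known closability criterion for jump forms with respect to a measure comparable to a density that is bounded below,'' argued via Fatou/lower semicontinuity, and you dismiss the singular set (particle collisions, $\{\Psi=\infty\}$) on the grounds that it is $\mu$- and $\Lambda$-negligible. That last step does not work: closability of a Dirichlet-type form is not a measure-zero condition but a capacity-type condition, and for a jump form the Fatou argument requires controlling $f_n(\xi^{x_j y})$ for $dy$-a.e.\ $y$, including $y$ for which the post-jump configuration $\xi^{x_j y}$ lies in or near the zero set of the reference density. Absolute continuity of the pushforward of $\mu$ under $\xi\mapsto\xi^{x_j y}$ is precisely what is problematic for long-range $\Psi$ (the paper itself notes that $d\mu_y/d\mu_x$ may diverge), so you cannot simply upgrade $f_n\to 0$ $\mu$-a.e.\ to $\nadeffsa{j}{y}f_n\to 0$ $(\mu\otimes dy)$-a.e. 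Moreover, ``bounded below on compacts'' is not enough even if the kernel were symmetric in $(x_j,y)$, because here the jump kernel $\nu$ is unbounded as $y\to x_j$ and the Hamiltonian in the exponent is allowed to blow up. Lower semi-continuity of $e^{-\mathcal{H}_r}$ (via (A.4)) gives a lower bound on compacts away from the singular set, but does not control what an $\mathfrak{E}$-Cauchy sequence does \emph{across} that set.

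What the paper actually does at this step is introduce a double truncation that you do not have: $\nu_q=\nu\,\1(\nu\le q)$ (truncate the jump kernel) together with the indicator factors $\Lambda_q(\x_\sfm)$ and $\Lambda_q(y|\x_\sfm^{\langle j\rangle})$ (restrict to configurations where the single-site Hamiltonian contributions are $\le q$ before and after the jump). On the truncated region the quasi-Gibbs density $\check\sigma$ is bounded above and below by $C_2^{\pm 1}e^{\mp q\cdot(\text{const})}$, the kernel is bounded by $q$, and the volume is finite, so the truncated form $\Eqm$ satisfies $\Eqm(f,f)\le 2(C_2^2 e^{2q}+1)\sfm q|U_{b_\ell}|\,\|f\|_{L^2}^2$ and is therefore \emph{continuous} on $L^2$, hence trivially closable --- no Fatou argument, no absolute continuity of the jumped measure needed. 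Closability of the untruncated form then follows because $\Eq$ is an increasing limit in $q$ (and in $\sfn,\sfm,r,\ell,k$) of closable forms, by the monotone closability lemma (\cite[Lemma 2.1]{O96}, Lemma \ref{O96lem2.1.1}), followed by the disintegration step (Lemma \ref{lem 5.3}) and the identification $\mathfrak{E}^\infty=\mathfrak{E}$ on $\Dinf$ (Lemma \ref{lem.2.2}). Without the $\nu_q$ and $\Lambda_q$ truncations the localized form is not bounded and your closability claim for it is unsupported; this is the missing idea.
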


\section{Applications} 

In this section we give some applications to Theorem  \ref{Theorem.1}. 

\subsection{Interacting L\'{e}vy processes}

In this subsection we set $S = \R^d$. From conditions (B.2) and (B.3) we can construct infinite particle systems of L\'{e}vy processes with interaction. In particular, if we take 
$$
\nu(\xi, x; y) = c(d, \alpha)^{-1} |x-y|^{-d-\alpha}
$$ for $\alpha > \kappa$ with the constant $c(d, \alpha)$ given by 
\begin{equation*}
c(d, \alpha) = \frac{2^{-\alpha+1}\pi^{\frac{d+1}{d}}}{\Gamma\left( \frac{\alpha}{2}+1 \right) \Gamma\left( \frac{a+d}{2} \right)\sin\frac{\pi\alpha}{2}}, 
\end{equation*}
then we can construct infinite particle systems of (symmetric) $\alpha$-stable process with interaction. 
In this situation the bilinear form is given by  
\begin{equation*}
\mathfrak{E}_{\alpha}(f, g) = \frac{1}{2} \int_{\frakM} \mu(d\xi) \int_{\R^d} \xi(dx) \int_{\R^d} \frac{\{ f(\xi^{x y})-f(\xi) \} \{ g(\xi^{x y})-g(\xi) \}}{c(d, \alpha)|x-y|^{d+\alpha}} dy. 
\end{equation*}
From assumption (B.2) if $\mu$ is a translation invariant measure like ${\rm{Sine}}_{\beta}$ random point field with $\beta=1,2,4$ or Ginibre random point field, then we can construct interacting (symmetric) $\alpha$-stable processes for all $0<\alpha<2$. 

Moreover, we can apply our theorem to interacting (symmetric) stable-like processes. For a measurable function $\alpha : \R^d \to \R$ we define the following bilinear form :
\begin{equation*}
\tilde{\mathfrak{E}}_{{\rm SL}, \alpha}(u, v) = \frac{1}{2} \iint_{\R^d \times \R^d - \Delta} \frac{(u(x)-u(y))(v(x)-v(y))}{|x-y|^{d+\alpha(x)}}dx dy, \; u,v \in \mathcal{C}^{{\rm lip}}_0(\R^d),
\end{equation*}
where $\Delta = \{ (x, x) ; x \in \R^d \}$ and and $\mathcal{C}^{{\rm lip}}_0(\R^d)$ is the space of all uniformly Lipschitz continuous functions with compact support.
Under these assumptions \\
(i) \quad $0 < \alpha(x) < 2$, $dx$-a.e. $x \in \R^d$, \\
(ii) \quad $\frac{1}{2-\alpha}$, $\frac{1}{\alpha} \in L^{1}_{\rm loc}(\R^d)$, \\
(iii) \quad there exists compact set $K \in \R^d$ such that $\int_{\R^d \setminus K} |x|^{-d-\alpha(x)}dx < \infty$. \\
 It is known that the bilinear form is closable and the closure $(\tilde{\mathfrak{E}}_{{\rm SL}, \alpha}, \mathcal{F}_{{\rm SL}, \alpha})$ is a regular Dirichlet form on $L^2(\R^d, dx)$.
Thus there exists a Hunt process $(X_t^{{\rm SL}, \alpha}, \P_x^{{\rm SL}, \alpha})$ associated with $(\tilde{\mathfrak{E}}_{{\rm SL}, \alpha}, \mathcal{F}_{{\rm SL}, \alpha})$, which is a (symmetric) stable-like process \cite{Uem02}.  
Let $\nu(\xi, x; y) = |x-y|^{-d-\alpha(x)}$ such that $\alpha(x)$ satisfies the above conditions (ii), (iii) and $\kappa < \alpha(x) < 2$, $dx$-a.e. $x \in \R^d$. Then we can construct infinite systems of (symmetric) stable-like process with interaction. In this situation, the bilinear form is given by  
\begin{equation*}
\mathfrak{E}_{{\rm SL}, \alpha}(f, g) = \frac{1}{2} \int_{\frakM} \mu(d\xi) \int_{\R^d} \xi(dx) \int_{\R^d \setminus \{ x \}} \frac{\{ f(\xi^{x y})-f(\xi) \} \{ g(\xi^{x y})-g(\xi) \}}{|x-y|^{d+\alpha(x)}} dy. 
\end{equation*}

\subsection{Examples of generator and ISDE for the processes}

In this subsection we give examples of the $L^2$-generator for the processes. 
\begin{ex}
(i) \quad Let $\mu$ be a Gibbs measure with a self potential $\Phi$ and a interaction potential $\Psi$. Then we have 
\begin{equation*}
\frac{\rho^1(y)}{\rho^1(x)}\frac{d\mu_y}{d\mu_x}(\xi \setminus x) = \exp\left\{ -\Phi (y)+\Phi (x)-\sum_i \left\{ \Psi(x_i, y) - \Psi(x_i, x) \right\} \right\}, 
\end{equation*}
for $x \in \xi$ and $\xi \setminus x = \sum_i \delta_{x_i}$. 
Then the $L^2$-generator associated with the process $\Xi_t$ is the closure of the operator
\begin{equation*}
L f(\xi) = \int_{\R^d} \xi(dx) \int_{\R^d} dy \; c(\xi, x; y)[f(\xi^{x y})-f(\xi)], 
\end{equation*}
with 
$$
c(\xi, x;y)=\nu(\xi,x;y)+ \nu(\xi^{x y},y;x)
\exp\left\{ -\Phi (y)+\Phi (x)-\sum_i \left\{ \Psi(x_i, y) - \Psi(x_i, x) \right\} \right\}.
$$
In particular, when $\nu(\xi^{x y},x;y)=p(|x-y|)$ and $\Phi=0$,
$$
c(\xi, x;y)=p(|x-y|) \left\{ 1+ 
\prod_{i}\frac{\exp\{\Psi(x_i, x)\}}{\exp\{\Psi(x_i, y)\}}
\right\}.
$$
Here we give two examples of interaction potentials with long range. 

\noindent (1) (Lennard-Jones 6-12 potential)
$$
\Psi(x, y)=|x-y|^{-12}-|x-y|^{-6}, \quad x,y \in \R^3.
$$
 
\noindent (2) (Riesz potential) \quad Let $a>d$.
$$
\Psi(x, y)=|x-y|^{-a}, \quad x,y \in \R^d.
$$  

\noindent
(ii) \quad Our result is more interesting for quasi-Gibbs measures which are not Gibbs measures. 
The determinatal point fields, Sine$_\beta$, Bessel$_{\alpha,\beta}$, Airy$_\beta$ and Ginibre random point fields are examples of them.
The interaction potentials of Sine$_\beta$, Bessel$_{\alpha,\beta}$ and Airy$_\beta$ random point fields
are given by
$$
\Psi(x,y)=\beta\log |x-y|, \quad x,y\in\R.
$$
Ginibre random random point field $\mu_{{\rm gin}}$ is a probability measure on the configuration space on $\R^2$ with self potential $\Phi(x)=0$ and interaction potential $\Psi(x, y) = -2\log|x-y|$. From Theorem 1.3 in Osada and Shirai \cite{OsaShi} we see that 
\begin{equation*} 
c(\xi, x; y) = \nu(\xi,x;y)+ \nu(\xi^{x y},y;x)
\lim_{r \to \infty} \prod_{|x_i|<r} \frac{|y-x_i|^2}{|x-x_i|^2}. 
\end{equation*}
\end{ex}
In the forthcoming paper \cite{EsaSDE} we discuss that the associated labeled process solves the following ISDE:  
\begin{equation*}
X_j(t) = X_j(0) + \int_{[0, t) \times \R^d \times [0, \infty)}N(dsdudr) u \1\left( 0 \leq r \leq c(\sfX(s-), X_j(s-), X_j(s-)+u) \right),
\end{equation*}
for $j \in \N$, where $\sfX(t) = \sum_i \delta_{X_i(t)}$ and $N(dsdudr)$ is a Poisson random point field on $[0, \infty) \times \R^d \times [0, \infty)$ with intensity $dsdudr$.

\subsection{Comments for Glauber dynamics}

Let $\mu$ be a random point field and $\mu_x$, $x\in S$ be its Palm measures.
Suppose that $\mu_x$ is absolutely continuous with respect to $\mu$. Then we can consider the operator $L_{\rm Gla}$ on $L^2(\mu)$ defined by 
\begin{equation*}
L_{\rm Gla} f(\xi) = \int_S (f(\xi \cdot x)-f(\xi))\rho(x) \frac{d\mu_x}{d\mu}(\xi)dx + \int_{S} \xi(dx) (f(\xi \setminus x)-f(\xi)). 
\end{equation*}
Here we set $\xi \cdot x = \xi + \delta_{x}$ for $\xi \in \frakM$ and $x \in S$. 
The associated process can be regarded as the continuum version of Glauber dynamics. This is associated with the bilinear form 
\begin{equation*}
\mathfrak{E}_{\rm Gla} (f, g) = \int_{\frakM} \mu(d\xi) \int_{S} \xi(dx) (f(\xi \setminus x) - f(\xi))(g(\xi \setminus x) - g(\xi)). 
\end{equation*}
Under the same assumptions on $\mu$ as in Theorem \ref{Theorem.1}, we can show that the closure of $(\mathfrak{E}_{\rm Gla}, \Dinf)$ is a quasi-regular Dirichlet form by the same argument. Whereas if $\mu_x$ is singular to $\mu$ such as Ginibre random point field, the operator $L_{\rm Gla}$ can not be defined and the associated Glauber dynamics could not exist. 

\section{Proof of Theorem \ref{CQG}} 

In this section we prove Theorem \ref{CQG}. 
%
As a first step, we prepare some notations. For $\sfm \ge \sfn$, $\ell \ge r$, $k\in\N$ and $\xi\in\frakM$,
we introduce the measure $\munm$ on $U_{b_r}^{\sf{n}} \times (U_{b_\ell}\setminus U_{b_r})^{\sf{m}-\sf{n}}$ determined by 
$$
\int_{\frakM_{r,\sf{n}} \cap \frakM_{\ell,\sf{m}}}f(\eta)\mu_{\ell,k,\xi}^{\sf{m}}(d\eta
)=\int_{U_{b_r}^{\sf{n}} \times (U_{b_\ell}\setminus U_{b_r})^{\sf{m}-\sf{n}}}f_{\ell,\xi}^{\sf{m}}(\x_{\sfm})
\munm(d\x_{\sfm}),
$$
for any bounded measurable function $f$ on $\frakM$.
By (\ref{5.3}), for $\mu_{\ell,k}^{\sfm}$-a.s. $\xi\in\frakM$,
$\munm$ has a density $\chesig_{r,\ell, k, \xi}^{\sfn,\sfm}(\x_{\sfm})$ with respect to $e^{-\mathcal{H}_\ell(\x_{\sfm})}d\x_{\sfm}$. Here $d\x_{\sfm}$ denotes the Lebesgue measure on $U_{b_\ell}^{\sfm}$. 
We note that according to (\ref{5.3}), $\chesig_{r,\ell, k, \xi}^{\sfn,\sfm}(\x_{\sfm})$ is uniformly positive and bounded on $U_{b_r}^{\sf{n}} \times (U_{b_\ell}\setminus U_{b_r})^{\sf{m}-\sf{n}}$. 
We consider a bilinear form $\Eq$. For $q \in \N$ put $\nu_q(\xi, x; y)=\nu(\xi, x; y)\1 (\nu(\xi, x; y)\le q )$
and 
\begin{align*}
&\Lambda_q(\x_{\sfm})= \prod_{i=1}^{\sfm}\1 
\Big( |\Phi(x_i)+\sum_{\substack{j \neq i \\ 1 \leq j \leq \sfm}} \Psi(x_i, x_j)|\le q \Big), 
\\
&\Lambda_q(y|\x_{\sfm}) = \1\Big( |\Phi(y)+\sum_{j =1}^{\sfm} \Psi(y, x_j)|\le q \Big). 
\end{align*}
It is readily seen that
$\{ \nu_q \}$ is nondecreasing sequence in $q$ such that $\lim_{q\to\infty}\nu_q(\xi, x; y) = \nu(\xi, x; y)$ for $\mu$-a.s. $\xi \in \frakM$ and $dx$-a.e. $x, y \in S$, and $\{ \Lambda_q(\x_{\sfm}) \}$, $\{ \Lambda_q(y|\x_{\sfm}) \}$ are nondecreasing sequences in $q$. 
We put $\Eq = \Eqban{1}+\Eqban{2}+\Eqban{3}$, 
where 
\begin{align*}
\Eqban{1}(f, g) &= \int_{U_{b_r}^{\sf{n}} \times (U_{b_\ell}\setminus U_{b_r})^{\sfm-\sfn}} \munm(d\x_{\sfm}) \Lambda_q(\x_{\sfm})  \\
&\quad \times \sum_{j=1}^{\sfn} \int_{U_{b_r}} \Lambda_q(y|\x_{\sfm}^{\hiku{j}}) \nadeffsa{j}{y}f_{\ell, \xi}^{\sfm}(\x_{\sfm}) \cdot \nadeffsa{j}{y}g_{\ell, \xi}^{\sfm}(\x_{\sfm}) \nu_q(\xi, x_j; y) dy, \\
\Eqban{2}(f, g) &= \int_{U_{b_r}^{\sf{n}} \times (U_{b_\ell}\setminus U_{b_r})^{\sfm-\sfn}} \munm(d\x_{\sfm}) \Lambda_q(\x_{\sfm})  \\
&\quad \times \sum_{j=1}^{\sfn} \int_{U_{b_\ell}\setminus U_{b_r}} \Lambda_q(y|\x_{\sfm}^{\hiku{j}}) \nadeffsa{j}{y}f_{\ell, \xi}^{\sfm}(\x_{\sfm}) \cdot \nadeffsa{j}{y}g_{\ell, \xi}^{\sfm}(\x_{\sfm}) \nu_q(\xi, x_j; y) dy, \\
\Eqban{3}(f, g) &= \int_{U_{b_r}^{\sf{n}} \times (U_{b_\ell}\setminus U_{b_r})^{\sfm-\sfn}} \munm(d\x_{\sfm}) \Lambda_q(\x_{\sfm})   \notag \\
&\quad \times \sum_{j=\sfn+1}^{\sfm}
\int_{U_{b_r}} \Lambda_q(y|\x_{\sfm}^{\hiku{j}}) \nadeffsa{j}{y}f_{\ell, \xi}^{\sfm}(\x_{\sfm}) \cdot \nadeffsa{j}{y}g_{\ell, \xi}^{\sfm}(\x_{\sfm}) \nu_q(\xi, x_j; y)dy.
\end{align*}
Here for $(f_{r, \xi}^{\sf{m}})_{\sf{m}=\{0\}\cup\N}$, which are $U_{b_r}$-representations of $f$, we set 
\begin{align*}
\nadeffsa{j}{y}f_{r, \xi}^{\sfn}(\x_{\sfn}) &= \begin{cases} f_{r, \xi \cdot y}^{{\sfn}-1}(\x_{\sfn}^{\hiku{j}}) - f_{r, \xi}^{\sfn}(\x_{\sfn}), \quad \text{if $\x_{\sfn} \in U_{b_r}^{\sfn}$, $y \notin U_{b_r}$, }
\\
f_{r, \xi}^{{\sfn}}(\x_{\sfn}^{\hiku{j}}\cdot y) - f_{r, \xi}^{\sfn}(\x_{\sfn}), \quad \text{if $\x_{\sfn} \in U_{b_r}^{\sfn}$, $y \in U_{b_r}$, }
\end{cases}
\end{align*}
for $j=1,2,\dots, \sfn,$ with
$\x_{\sfn}^{\hiku{j}}= (x_{\sfn}^1, \ldots, x_{\sfn}^{j-1}, x_{\sfn}^{j+1}, \ldots, x_{\sfn}^{\sfn}) \in S^{\sfn -1}$ 
and 
$\x_{\sfn} \cdot y = (x_{\sfn}^1, \ldots, x_{\sfn}^{\sfn}, y) \in S^{\sfn +1}$. 

Then we show the following lemma. 

\begin{lemma} \label{lem 5.4}
Assume (A.3) and (A.4). Then $(\mathfrak{E}_{r, \ell, q, k, \xi}^{\sfn, \sfm}, \Dinf)$ is closable on $L^2(\frakM_{q, \sfm}, \mu_{q, k, \xi}^{\sfm})$ for $\sfm \ge \sfn$, $\ell \ge r$, $k, q\in\N$ and $\mu_{\ell, k}^{\sfm}$-a.s. $\xi$. 
\end{lemma}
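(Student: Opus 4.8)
The plan is to establish closability of $\Eq$ by treating the three pieces $\Eqban{1},\Eqban{2},\Eqban{3}$ separately. Since $\Eqban{1},\Eqban{2},\Eqban{3}\ge 0$ on $\Dinf$ and $\Eq=\Eqban{1}+\Eqban{2}+\Eqban{3}$, a sequence $f_N\in\Dinf$ with $f_N\to 0$ in $L^2(\frakM_{q,\sfm},\mu_{q,k,\xi}^{\sfm})$ that is $\Eq$-Cauchy is $\Eqban{i}$-Cauchy for each $i$; so if every $(\Eqban{i},\Dinf)$ is closable, then $\Eqban{i}(f_N,f_N)\to 0$ for each $i$, hence $\Eq(f_N,f_N)\to 0$. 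Each $\Eqban{i}$ is a difference (jump-type) form: writing $\x_\sfm[x_j\mapsto y]$ for the $\sfm$-tuple obtained from $\x_\sfm$ by replacing its $j$-th entry by $y$, permutation invariance gives $\nadeffsa{j}{y}f_{\ell,\xi}^{\sfm}(\x_\sfm)=f_{\ell,\xi}^{\sfm}(\x_\sfm[x_j\mapsto y])-f_{\ell,\xi}^{\sfm}(\x_\sfm)$, so that
\[
\Eqban{i}(f,f)=\int_{\Omega_i}\bigl(f\circ e_2-f\circ e_1\bigr)^2\,d\Theta_i ,
\]
where $\Omega_i$ is the set of triples $(\x_\sfm,j,y)$ appearing in the definition of $\Eqban{i}$, $\Theta_i$ is the measure there, namely $\munm(d\x_\sfm)\,\Lambda_q(\x_\sfm)\,\Lambda_q(y\mid\x_\sfm^{\hiku{j}})\,\nu_q(\xi,x_j;y)\,dy$ tensored with counting measure in $j$, the source map is $e_1(\x_\sfm,j,y)=\x_\sfm$, and the target map is $e_2(\x_\sfm,j,y)=\x_\sfm[x_j\mapsto y]$ (a configuration lying in the stratum $\{\xi(U_{b_r})=\sfn'\}$ with $\sfn'=\sfn,\ \sfn-1,\ \sfn+1$ for $i=1,2,3$ respectively). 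Note $\Theta_i$ is finite, since $\nu_q\le q$, $\Lambda_q\le 1$, $U_{b_\ell}$ is bounded and $\munm$ is finite; in particular every bounded function of $\Dc$, and a fortiori the dense set $\Dinf$, lies in the domain.

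I would then invoke the following elementary closability criterion. Let $\Theta$ be a positive measure on a space $\Omega$, let $e_1,e_2:\Omega\to E$ be measurable, and let $\nu$ be a measure on $E$ with $(e_1)_*\Theta\ll\nu$ and $(e_2)_*\Theta\ll\nu$. Then $u\mapsto\int_\Omega(u\circ e_2-u\circ e_1)^2\,d\Theta$ is closable on $L^2(E,\nu)$: if $f_N\to 0$ in $L^2(\nu)$ and $(f_N\circ e_2-f_N\circ e_1)$ is Cauchy in $L^2(\Theta)$ with limit $G$, then along a subsequence $f_N\to 0$ $\nu$-a.e.\ and $f_N\circ e_2-f_N\circ e_1\to G$ $\Theta$-a.e.; since $(e_1)_*\Theta$ and $(e_2)_*\Theta$ vanish on $\nu$-null sets, $f_N\circ e_1\to 0$ and $f_N\circ e_2\to 0$ $\Theta$-a.e., so $G=0$ $\Theta$-a.e.\ and $\Eqban{i}(f_N,f_N)\to 0$. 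For the source map the hypothesis is immediate: $(e_1)_*\Theta_i$ has density $\Lambda_q(\x_\sfm)\sum_j\int\Lambda_q(y\mid\x_\sfm^{\hiku{j}})\nu_q(\xi,x_j;y)\,dy\le \sfm\,q\,|U_{b_\ell}|$ with respect to $\munm$, which is the coordinate description of the reference measure restricted to $\{\xi(U_{b_r})=\sfn\}$; hence $(e_1)_*\Theta_i\ll\mu_{q,k,\xi}^{\sfm}$.

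The substance of the proof is the target map, and this is where (A.3) and (A.4) enter. By (A.3) and (\ref{5.3}), $\munm$ has a density $\chesig_{r,\ell,k,\xi}^{\sfn,\sfm}$, bounded above and below by positive constants, relative to $e^{-\mathcal{H}_\ell(\x_\sfm)}\,d\x_\sfm$; thus the reference measure is $\asymp e^{-\mathcal{H}_\ell(\x_\sfm)}\,d\x_\sfm$ on each of the finitely many strata $\{\xi(U_{b_r})=\sfn'\}$, $0\le\sfn'\le\sfm$. For fixed $j$, the map $(\x_\sfm,y)\mapsto(\x_\sfm[x_j\mapsto y],\,x_j)$ is a measure-preserving permutation of Euclidean coordinates, so $(e_2)_*\Theta_i$ acquires a Lebesgue density which, using $\chesig\asymp 1$ and $\nu_q\le q$, is $\lesssim q\,\sfm\,|U_{b_\ell}|$ times the value of $e^{-\mathcal{H}_\ell}$ at the pre-jump configuration, on the set where the truncations equal $1$. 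Now decompose $\mathcal{H}_\ell$ of a configuration as the Hamiltonian of the $\sfm-1$ unmoved points (unchanged by the jump) plus the quantity $\Phi(\cdot)+\sum_{i\ne j}\Psi(\cdot,x_i)$ evaluated at the moved point; the latter is bounded by $q$ in absolute value at the pre-jump position (by the $j$-th factor of $\Lambda_q(\x_\sfm)$) and at the post-jump position (by $\Lambda_q(y\mid\x_\sfm^{\hiku{j}})$), whence $e^{-\mathcal{H}_\ell(\mathrm{pre})}\le e^{2q}\,e^{-\mathcal{H}_\ell(\mathrm{post})}$. Hence the density of $(e_2)_*\Theta_i$ is $\lesssim e^{2q}\,q\,\sfm\,|U_{b_\ell}|\,e^{-\mathcal{H}_\ell}$, which is $\lesssim$ the reference density, so $(e_2)_*\Theta_i\ll\mu_{q,k,\xi}^{\sfm}$. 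The criterion then yields closability of each $\Eqban{i}$, and summing over $i=1,2,3$ completes the proof.

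I expect the main obstacle to be precisely this estimate for the target push-forward: one must pin down how a single jump alters the finite-volume Hamiltonian $\mathcal{H}_\ell$ and check that the two truncation factors built into $\Theta_i$ are exactly what controls that alteration uniformly (by $2q$), and then exploit the two-sided comparison $\chesig\asymp 1$ coming from the quasi-Gibbs structure (A.3) — with (A.4) guaranteeing that $\mathcal{H}_\ell$, and hence the relevant truncation sets and densities, are measurable and well behaved — in order to pass from Lebesgue measure back to $\mu_{q,k,\xi}^{\sfm}$. The remaining ingredients (the reduction to the three pieces, the trivial source bound, and the subsequence argument) are routine.
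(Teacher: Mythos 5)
Your argument is correct and rests on the same core estimate the paper uses — the two-sided quasi-Gibbs comparison $\chesig_{r,\ell,k,\xi}^{\sfn,\sfm}\asymp 1$ from (\ref{5.3}) together with the observation that the two truncation factors $\Lambda_q(\x_{\sfm})$ and $\Lambda_q(y\mid\x_{\sfm}^{\hiku{j}})$ control the change in the finite-volume Hamiltonian $\mathcal{H}_\ell$ under one jump by the factor $e^{2q}$ — but the surrounding framework is genuinely different. The paper does not decompose into the three pieces $\Eqban{1},\Eqban{2},\Eqban{3}$, nor does it invoke a jump-form closability criterion. It simply observes $\Eq\le\Eqm$, where $\Eqm\equiv\mathfrak{E}_{\ell,\ell,q,k,\xi}^{\sfm,\sfm,1}$ is the ``diagonal'' form in which all $\sfm$ coordinates are allowed to jump anywhere in $U_{b_\ell}$, and then proves that $\Eqm$ is a \emph{bounded} form:
\[
\Eqm(f,f)\le 2\bigl(C_2^2 e^{2q}+1\bigr)\,\sfm\, q\,|U_{b_\ell}|\,\|f\|^2_{L^2(\frakM_{\ell,\sfm},\,\mu_{\ell,k,\xi}^{\sfm})}.
\]
A symmetric nonnegative form bounded by the $L^2$-norm is automatically closable (indeed closed on all of $L^2$), so the paper stops there.

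Your density estimate for the target push-forward — that $(e_2)_*\Theta_i$ has density $\lesssim e^{2q}q\,\sfm\,|U_{b_\ell}|$ relative to the reference measure — is precisely the bound the paper extracts; you use it only as an absolute-continuity hypothesis to feed a subsequence argument, but it already gives boundedness directly, since $\Eqban{i}(f,f)\le 2\int (f\circ e_1)^2\,d\Theta_i + 2\int (f\circ e_2)^2\,d\Theta_i\lesssim\|f\|^2$. Your route is therefore valid and, because it only needs absolute continuity rather than a bounded density, would also cover settings where the truncated forms are merely closable without being bounded; in the present situation, however, the quasi-Gibbs structure makes the truncated forms continuous on $L^2$, so the paper's shorter boundedness argument suffices and avoids the piece-by-piece decomposition and the subsequence bookkeeping entirely.
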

\begin{proof}
By the simple observation we see
\begin{equation} \label{uekara}
\Eq(f, f) \leq  \mathfrak{E}_{\ell, \ell, q, k, \xi}^{\sfm, \sfm, 1}(f,f) \equiv \Eqm(f, f) \quad \text{for any $f \in \Dinf$}. 
\end{equation}
Setting $\chemu_{\ell, \ell, k, \xi}^{\sfm, \sfm} \equiv \chemu_{\ell, k, \xi}^{\sfm}$,  
we have
\begin{align*}
&\Eqm(f, f) 
\\ \notag
&= \int_{U_{b_\ell}^{\sfm}} \chemu_{\ell, k, \xi}^{\sfm}(d\x_{\sfm}) \Lambda_q(\x_{\sfm}) \sum_{j=1}^{\sfm} \int_{U_{b_\ell}} \Lambda_q(y|\x_{\sfm}^{\hiku{j}})(f_{\ell, \xi}^{\sfm}(\x_{\sfm}^{x_j y}) - f_{\ell, \xi}^{\sfm}(\x_{\sfm}))^2 \nu_q(\xi, x_j; y) dy \\
&\leq 2\int_{U_{b_\ell}^{\sfm}} \chemu_{\ell, k, \xi}^{\sfm}(d\x_{\sfm}) \Lambda_q(\x_{\sfm}) \sum_{j=1}^{\sfm} \int_{U_{b_{\ell}}} \Lambda_q(y|\x_{\sfm}^{\hiku{j}})\{ f_{\ell, \xi}^{\sfm}(\x_{\sfm}^{x_j y})^2 + f_{\ell, \xi}^{\sfm}(\x_{\sfm})^2 \} \nu_q(\xi, x_j; y) dy \notag \\
&\leq 2q\int_{U_{b_\ell}^{\sfm}} \chemu_{\ell, k, \xi}^{\sfm}(d\x_{\sfm}) \Lambda_q(\x_{\sfm}) \sum_{j=1}^{\sfm} \int_{U_{b_{\ell}}} \Lambda_q(y|\x_{\sfm}^{\hiku{j}}) f_{\ell, \xi}^{\sfm}(\x_{m}^{\hiku{j}} \cdot y)^2 dy \notag \\
&\quad + 2\sfm q|U_{b_\ell}| \|f\|_{L^2(\frakM_{\ell, \sfm}, \mu_{\ell, k, \xi}^{\sfm})}^2 \notag 
\end{align*}
and the first term of the right hand side is bounded by  
\begin{align*}
& 2qe^{2q}\sum_{j=1}^{\sfm} \int_{U_{b_{\ell}}^{\sfm+1}} \Lambda_q(\x_{\sfm})\chesig_{\ell, k, \xi}^{\sfm}(\x_{\sfm}) \frac{\chesig_{\ell, k, \xi}^{\sfm}(\x_{\sfm}^{\hiku{j}} \cdot y)}{\chesig_{\ell, k, \xi}^{\sfm}(\x_{\sfm}^{\hiku{j}} \cdot y)} \Lambda_q(y|\x_{\sfm}^{\hiku{j}}) e^{-\mathcal{H}_{\ell}(\x_{\sfm}^{\hiku{j}} \cdot y)} f_{\ell, \xi}^{\sfm}(\x_{\sfm}^{\hiku{j}} \cdot y)^2 d\x_{\sfm}dy \notag \\
&\leq 2C_2^2e^{2q}\sfm q|U_{b_\ell}|\|f\|_{L^2(\frakM_{\ell, \sfm}, \mu_{\ell, k, \xi}^{\sfm})}^2, \notag
\end{align*}
where $C_2$ is the constant in (\ref{5.3}). 
Hence we have 
\begin{equation} \label{iishiki}
\Eqm(f, f) \leq 2(C_2^2e^{2q}+1)\sfm q|U_{b_\ell}|\|f\|_{L^2(\frakM_{\ell, \sfm}, \mu_{\ell, k, \xi}^{\sfm})}^2. 
\end{equation}
Let $\{ f_n \}$ be a $\Eq$-Cauchy sequence in $\Dinf$ such that $\displaystyle \lim_{n \to \infty} \|f_n\|_{L^2(\frakM_{\ell, \sfm}, \mu_{\ell, k, \xi}^{\sfm})}=0$. 
Then by (\ref{uekara}), (\ref{iishiki}) we get $\displaystyle \lim_{n \to \infty}\Eq(f_n, f_n) =0$ for  $\mu_{\ell, k}^{\sfm}$-a.s. $\xi$. Hence, $(\Eq, \Dinf)$ is closable on $L^2(\frakM_{\ell, \sfm}, \mu_{\ell, k, \xi}^{\sfm})$. Thus we complete the proof.
\end{proof}

We use the following lemma, which is given in \cite[Lemma 2.1(1),(2)]{O96}.  See also \cite[Prop. I.3.7]{MR}. 

\begin{lemma} \label{O96lem2.1.1} {\rm (\cite[Lemma 2.1(1),(2)]{O96})} 
Let $\{ (\mathfrak{E}^{(n)}, \mathfrak{D}^{(n)}) \}_{n \in \N}$ be a sequence of positive definite, symmetric bilinear forms on $L^2(\frakM, \mu)$. \\
{\rm (1)} \  Suppose that $(\mathfrak{E}^{(n)}, \mathfrak{D}^{(n)})$ is closable for any $n \in \N$ and $\{ (\mathfrak{E}^{(n)}, \mathfrak{D}^{(n)}) \}$ is increasing, i.e. for any $n \in \N$ we have $\mathfrak{D}^{(n)} \supset \mathfrak{D}^{(n+1)}$ and $\mathfrak{E}^{(n)}(f, f) \leq \mathfrak{E}^{(n+1)}(f, f)$ for any $f \in \mathfrak{D}^{(n+1)}$. Let $\mathfrak{E}^{\infty}(f, f) = \lim_{n \to \infty} \mathfrak{E}^{(n)}(f, f)$ with the domain $\mathfrak{D}^{\infty} = \{ f \in \cap_{n \in \N} \mathfrak{D}^{(n)}; \sup_{n \in \N}\mathfrak{E}^{(n)}(f, f) < \infty \}$. Then $(\mathfrak{E}^{\infty}, \mathfrak{D}^{\infty})$ is closable on $L^2(\frakM, \mu)$. \\
{\rm (2)} \  In addition to the assumption (1), assume $(\mathfrak{E}^{(n)}, \mathfrak{D}^{(n)})$ are closed. Then $(\mathfrak{E}^{\infty}, \mathfrak{D}^{\infty})$ is closed. 
\end{lemma}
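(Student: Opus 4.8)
The plan is to deduce everything directly from the standard characterizations of closable and closed symmetric forms, the only genuine work being to promote ``for each fixed $n$'' statements to statements about the limit form by means of a triangle inequality that is uniform in $n$. Recall that a nonnegative symmetric form $(\mathcal{E}, \mathcal{D})$ on $L^2(\frakM, \mu)$ is closable iff $f_k \in \mathcal{D}$, $\|f_k\|_{L^2(\frakM, \mu)} \to 0$ and $\mathcal{E}(f_k - f_l, f_k - f_l) \to 0$ imply $\mathcal{E}(f_k, f_k) \to 0$, and that a closable form is closed iff $\mathcal{D}$ is complete for the norm $\|f\|_{\mathcal{E}_1} := (\mathcal{E}(f, f) + \|f\|_{L^2(\frakM, \mu)}^2)^{1/2}$. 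Before starting I would record the structural facts used throughout: by the monotonicity hypothesis, for every $f \in \mathfrak{D}^{\infty}$ (so $f \in \mathfrak{D}^{(n)}$ for all $n$) the sequence $\{\mathfrak{E}^{(n)}(f, f)\}_n$ is nondecreasing and bounded, hence $\mathfrak{E}^{\infty}(f, f) = \lim_n \mathfrak{E}^{(n)}(f, f) = \sup_n \mathfrak{E}^{(n)}(f, f)$ and in particular $\mathfrak{E}^{(n)}(f, f) \le \mathfrak{E}^{\infty}(f, f)$ for every $n$; the triangle inequality for $\sqrt{\mathfrak{E}^{(n)}(\cdot, \cdot)}$ together with this uniform bound shows that $\mathfrak{D}^{\infty}$ is a linear subspace; and polarization lets me argue everywhere with the quadratic forms only.

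For part (1), take $\{f_k\} \subset \mathfrak{D}^{\infty}$ with $\|f_k\|_{L^2(\frakM, \mu)} \to 0$ and $\mathfrak{E}^{\infty}(f_k - f_l, f_k - f_l) \to 0$. Fixing $n$, this same sequence lies in $\mathfrak{D}^{(n)}$, is $\mathfrak{E}^{(n)}$-Cauchy because $\mathfrak{E}^{(n)} \le \mathfrak{E}^{\infty}$ on $\mathfrak{D}^{\infty}$, and tends to $0$ in $L^2$; closability of $(\mathfrak{E}^{(n)}, \mathfrak{D}^{(n)})$ then gives $\mathfrak{E}^{(n)}(f_k, f_k) \to 0$ as $k \to \infty$. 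To make this uniform in $n$: given $\varepsilon > 0$ choose $N$ with $\mathfrak{E}^{\infty}(f_k - f_l, f_k - f_l) < \varepsilon$ for $k, l \ge N$; then for every $n$ and all $k, l \ge N$ one has $\sqrt{\mathfrak{E}^{(n)}(f_k, f_k)} \le \sqrt{\mathfrak{E}^{(n)}(f_k - f_l, f_k - f_l)} + \sqrt{\mathfrak{E}^{(n)}(f_l, f_l)} \le \sqrt{\varepsilon} + \sqrt{\mathfrak{E}^{(n)}(f_l, f_l)}$; letting $l \to \infty$ with $n$ fixed removes the last term, so $\mathfrak{E}^{(n)}(f_k, f_k) \le \varepsilon$ for all $n$, whence $\mathfrak{E}^{\infty}(f_k, f_k) = \sup_n \mathfrak{E}^{(n)}(f_k, f_k) \le \varepsilon$ for $k \ge N$. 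Since $\varepsilon$ is arbitrary, $\mathfrak{E}^{\infty}(f_k, f_k) \to 0$, which is the closability criterion.

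For part (2), by (1) the limit form is closable, so it remains to prove that $\mathfrak{D}^{\infty}$ is complete in $\|\cdot\|_{\mathfrak{E}^{\infty}_1}$. Let $\{f_k\} \subset \mathfrak{D}^{\infty}$ be $\mathfrak{E}^{\infty}_1$-Cauchy; it converges to some $f$ in $L^2(\frakM, \mu)$. For each fixed $n$ the sequence $\{f_k\}$ is $\mathfrak{E}^{(n)}_1$-Cauchy, so closedness of $(\mathfrak{E}^{(n)}, \mathfrak{D}^{(n)})$ gives $f \in \mathfrak{D}^{(n)}$ and $\mathfrak{E}^{(n)}(f_k - f, f_k - f) \to 0$ as $k \to \infty$; hence $f \in \bigcap_n \mathfrak{D}^{(n)}$. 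I would then rerun the uniformization argument: choosing $N$ with $\mathfrak{E}^{\infty}(f_k - f_l, f_k - f_l) < \varepsilon$ for $k, l \ge N$, the triangle inequality for $\sqrt{\mathfrak{E}^{(n)}}$ followed by $l \to \infty$ yields $\mathfrak{E}^{(n)}(f_k - f, f_k - f) \le \varepsilon$ for every $n$ and $k \ge N$; therefore $\sup_n \mathfrak{E}^{(n)}(f_k - f, f_k - f) < \infty$, so $f_k - f \in \mathfrak{D}^{\infty}$ and consequently $f = f_k - (f_k - f) \in \mathfrak{D}^{\infty}$, while $\mathfrak{E}^{\infty}(f_k - f, f_k - f) = \sup_n \mathfrak{E}^{(n)}(f_k - f, f_k - f) \le \varepsilon$ for $k \ge N$ together with $\|f_k - f\|_{L^2(\frakM, \mu)} \to 0$ and the arbitrariness of $\varepsilon$ gives $f_k \to f$ in $\|\cdot\|_{\mathfrak{E}^{\infty}_1}$. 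This establishes completeness, hence closedness.

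The one delicate point, and it is a small one, is the order of the limit operations in the uniformization step: the triangle inequality is applied inside $\sqrt{\mathfrak{E}^{(n)}(\cdot, \cdot)}$ while the bound $\varepsilon$ holds uniformly in $n$, then the auxiliary index $l$ is sent to $\infty$ for each fixed $n$, and only afterwards is the supremum over $n$ taken; reversing the last two operations is not justified, since the convergences $\mathfrak{E}^{(n)}(f_l, f_l) \to 0$ (in part (1)) and $\mathfrak{E}^{(n)}(f_l - f, f_l - f) \to 0$ (in part (2)) are known only pointwise in $n$, not uniformly. Everything else is a routine unwinding of the definitions of closable and closed symmetric forms and of the monotone-limit structure of the domain $\mathfrak{D}^{\infty}$.
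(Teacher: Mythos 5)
Your proof is correct. Note that the paper does not prove this lemma at all: it is quoted verbatim from Osada \cite[Lemma 2.1(1),(2)]{O96} (with a pointer to \cite[Prop.\ I.3.7]{MR}), so there is no in-paper argument to compare with. Your argument — reduce to the standard characterizations (closable iff $L^2$-null $\mathfrak{E}$-Cauchy sequences have vanishing form, closed iff the domain is $\|\cdot\|_{\mathfrak{E}_1}$-complete), use $\mathfrak{E}^{(n)}\le\mathfrak{E}^{\infty}=\sup_n\mathfrak{E}^{(n)}$ on $\mathfrak{D}^{\infty}$ to pass the hypotheses down to each fixed $n$, and then recover a bound uniform in $n$ via the triangle inequality for $\sqrt{\mathfrak{E}^{(n)}}$ before taking $\sup_n$ — is exactly the standard monotone-convergence argument behind the cited references, and the one delicate point you flag (sending the auxiliary index $l\to\infty$ for fixed $n$ before taking the supremum over $n$) is handled correctly, including the verification in part (2) that $f_k-f\in\mathfrak{D}^{\infty}$ and hence $f\in\mathfrak{D}^{\infty}$ by linearity of the limit domain.
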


For $r, \ell, k, \sfn, \sfm \in \N$, $\xi \in \frakM$, we set 
$$\mathfrak{E}_{r, \ell, k, \xi}^{\sfn, \sfm}(f, f) = \lim_{q \to \infty} \mathfrak{E}_{r, \ell, q, k, \xi}^{\sfn, \sfm}(f, f), \; f \in \Dinf, 
$$ 
and put
$$
\mathfrak{E}_{r,\ell, k}^{\sfn, \sfm}(f, f) = \int_{\frakM} \lim_{q \to \infty} \mathfrak{E}_{r, \ell, q, k, \xi}^{\sfn, \sfm}(f, f) \mu_{\ell, k}^{\sfm}(d\xi), \; f \in \Dinf,
$$
where $\mu_{\ell, k, \xi}^{\sfm}(d\x_{\sfm})$ is defined in (\ref{5.4}).  

\begin{lemma} \label{lem 5.3} 
Assume that $(\mathfrak{E}_{r, \ell, k, \xi}^{\sfn, \sfm}, \Dinf)$ is closable on $L^2(\frakM_{\ell, \sfm}, \mu_{\ell, k, \xi}^{\sfm})$ for $\mu_{\ell, k}^{\sfm}$-a.s. $\xi$. Then $(\mathfrak{E}_{r, \ell, k}^{\sfn, \sfm}, \Dinf)$ is closable on $L^2(\frakM, \mu_{\ell, k}^{\sfm})$. 
\end{lemma}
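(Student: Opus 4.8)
The plan is to disintegrate $\mu_{\ell,k}^{\sfm}$ along the conditioning that defines the fibre measures $\mu_{\ell,k,\xi}^{\sfm}$ and to reduce the statement to the fibrewise closability assumed in the hypothesis. By the defining property of the conditional measures in (\ref{5.4}), $\int_{\frakM} F\, d\mu_{\ell,k}^{\sfm} = \int_{\frakM}\big(\int_{\frakM} F\, d\mu_{\ell,k,\xi}^{\sfm}\big)\mu_{\ell,k}^{\sfm}(d\xi)$ for bounded measurable $F$, hence $\|F\|_{L^2(\mu_{\ell,k}^{\sfm})}^2 = \int_{\frakM}\|F\|_{L^2(\mu_{\ell,k,\xi}^{\sfm})}^2\,\mu_{\ell,k}^{\sfm}(d\xi)$, while by the definitions stated just before the lemma $\mathfrak{E}_{r,\ell,k}^{\sfn,\sfm}(f,f) = \int_{\frakM}\mathfrak{E}_{r,\ell,k,\xi}^{\sfn,\sfm}(f,f)\,\mu_{\ell,k}^{\sfm}(d\xi)$ with $\mathfrak{E}_{r,\ell,k,\xi}^{\sfn,\sfm}(f,f) = \lim_{q\to\infty}\mathfrak{E}_{r,\ell,q,k,\xi}^{\sfn,\sfm}(f,f)$. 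First I would record the (routine) joint measurability of $(\xi,\x_{\sfm})\mapsto$ the density of $\munm$, which makes $\xi\mapsto\mathfrak{E}_{r,\ell,q,k,\xi}^{\sfn,\sfm}(f,f)$ measurable for each fixed $f\in\Dinf$, and therefore also $\xi\mapsto\mathfrak{E}_{r,\ell,k,\xi}^{\sfn,\sfm}(f,f)$, being an increasing limit in $q$; so every integral below makes sense. Since each $\mathfrak{E}_{r,\ell,q,k,\xi}^{\sfn,\sfm}$ is a nonnegative symmetric bilinear form, so is its increasing limit $\mathfrak{E}_{r,\ell,k,\xi}^{\sfn,\sfm}$, and $f\mapsto\mathfrak{E}_{r,\ell,k,\xi}^{\sfn,\sfm}(f,f)^{1/2}$ obeys the triangle inequality.

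Recall that closability of $(\mathfrak{E}_{r,\ell,k}^{\sfn,\sfm},\Dinf)$ means that every $\mathfrak{E}_{r,\ell,k}^{\sfn,\sfm}$-Cauchy sequence in $\Dinf$ converging to $0$ in $L^2(\mu_{\ell,k}^{\sfm})$ also converges to $0$ in $\mathfrak{E}_{r,\ell,k}^{\sfn,\sfm}$, and that it is enough to verify this for a subsequence of any such sequence (the whole sequence then converges, by the Cauchy property and the triangle inequality). So I would fix such a sequence $\{f_n\}$ and, passing to a subsequence, assume $\mathfrak{E}_{r,\ell,k}^{\sfn,\sfm}(f_{n+1}-f_n,f_{n+1}-f_n)\le 4^{-n}$. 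Setting $h(\xi):=\sum_{n\ge1}\mathfrak{E}_{r,\ell,k,\xi}^{\sfn,\sfm}(f_{n+1}-f_n,f_{n+1}-f_n)^{1/2}$, Tonelli's theorem gives $\int_{\frakM}h\,d\mu_{\ell,k}^{\sfm}\le\sum_n 2^{-n}<\infty$ and, with Minkowski's inequality in $L^2(\mu_{\ell,k}^{\sfm})$, also $\|h\|_{L^2(\mu_{\ell,k}^{\sfm})}\le\sum_n 2^{-n}<\infty$; in particular $h<\infty$ $\mu_{\ell,k}^{\sfm}$-a.s. Using $\|f_n\|_{L^2(\mu_{\ell,k}^{\sfm})}^2 = \int_{\frakM}\|f_n\|_{L^2(\mu_{\ell,k,\xi}^{\sfm})}^2\,\mu_{\ell,k}^{\sfm}(d\xi)\to 0$, a further subsequence achieves $\|f_n\|_{L^2(\mu_{\ell,k,\xi}^{\sfm})}\to 0$ for $\mu_{\ell,k}^{\sfm}$-a.s. $\xi$. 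Finally $\mathfrak{E}_{r,\ell,k,\cdot}^{\sfn,\sfm}(f_1,f_1)\in L^1(\mu_{\ell,k}^{\sfm})$, because its integral equals $\mathfrak{E}_{r,\ell,k}^{\sfn,\sfm}(f_1,f_1)<\infty$, hence $\mathfrak{E}_{r,\ell,k,\xi}^{\sfn,\sfm}(f_1,f_1)<\infty$ for a.e. $\xi$.

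Now fix $\xi$ in the full-measure set on which $h(\xi)<\infty$, $\mathfrak{E}_{r,\ell,k,\xi}^{\sfn,\sfm}(f_1,f_1)<\infty$ and $\|f_n\|_{L^2(\mu_{\ell,k,\xi}^{\sfm})}\to 0$. Then $\{f_n\}\subset\Dinf$ is $\mathfrak{E}_{r,\ell,k,\xi}^{\sfn,\sfm}$-Cauchy, since $\mathfrak{E}_{r,\ell,k,\xi}^{\sfn,\sfm}(f_n-f_m,f_n-f_m)^{1/2}\le\sum_{j\ge n}\mathfrak{E}_{r,\ell,k,\xi}^{\sfn,\sfm}(f_{j+1}-f_j,f_{j+1}-f_j)^{1/2}$ is the tail of the convergent series $h(\xi)$, and it converges to $0$ in $L^2(\mu_{\ell,k,\xi}^{\sfm})$; by the assumed closability of $(\mathfrak{E}_{r,\ell,k,\xi}^{\sfn,\sfm},\Dinf)$ on $L^2(\mu_{\ell,k,\xi}^{\sfm})$ we get $\lim_{n\to\infty}\mathfrak{E}_{r,\ell,k,\xi}^{\sfn,\sfm}(f_n,f_n)=0$. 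To transfer this through the outer integral I would dominate: $f_n=f_1+\sum_{j=1}^{n-1}(f_{j+1}-f_j)$ together with the triangle inequality gives $\mathfrak{E}_{r,\ell,k,\xi}^{\sfn,\sfm}(f_n,f_n)^{1/2}\le\mathfrak{E}_{r,\ell,k,\xi}^{\sfn,\sfm}(f_1,f_1)^{1/2}+h(\xi)$ for all $n$, so $\mathfrak{E}_{r,\ell,k,\xi}^{\sfn,\sfm}(f_n,f_n)\le\big(\mathfrak{E}_{r,\ell,k,\xi}^{\sfn,\sfm}(f_1,f_1)^{1/2}+h(\xi)\big)^2$, and the right-hand side belongs to $L^1(\mu_{\ell,k}^{\sfm})$ by the previous paragraph. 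Dominated convergence then yields $\mathfrak{E}_{r,\ell,k}^{\sfn,\sfm}(f_n,f_n)=\int_{\frakM}\mathfrak{E}_{r,\ell,k,\xi}^{\sfn,\sfm}(f_n,f_n)\,\mu_{\ell,k}^{\sfm}(d\xi)\to 0$, as required.

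The bookkeeping — joint measurability in $\xi$, the two passages to subsequences, the seminorm triangle inequality — is all routine. The delicate point, which I would flag as the main obstacle, is upgrading the \emph{pointwise} (in $\xi$) convergence $\mathfrak{E}_{r,\ell,k,\xi}^{\sfn,\sfm}(f_n,f_n)\to 0$ to convergence of the integrals: this is precisely what forces the geometric choice $\mathfrak{E}_{r,\ell,k}^{\sfn,\sfm}(f_{n+1}-f_n,f_{n+1}-f_n)\le 4^{-n}$, so that the increment seminorms are summable in $L^2(\mu_{\ell,k}^{\sfm})$, yielding the function $h$ and hence the integrable dominating function $\big(\mathfrak{E}_{r,\ell,k,\cdot}^{\sfn,\sfm}(f_1,f_1)^{1/2}+h\big)^2$ needed for dominated convergence.
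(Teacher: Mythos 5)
Your proof is correct, and it takes a genuinely different route from the paper's. The paper linearizes the bilinear form: it introduces the vector-valued function $\hat f(\xi,\x_{\sfm},y;j)=\nadeffsa{j}{y}f_{\ell,\xi}^{\sfm}(\x_{\sfm})\,\1\{\cdots\}\sqrt{\nu(\xi,x_j;y)}$, so that $\mathfrak{E}_{r,\ell,k}^{\sfn,\sfm}(f,f)=\|\hat f\|^2_{L^2(\mu_{\ell,k}^{\sfm}\otimes\munm\otimes dy)}$. The $\mathfrak{E}_{r,\ell,k}^{\sfn,\sfm}$-Cauchy property then becomes a genuine $L^2$-Cauchy property on a concrete product space, hence $\hat f_p\to h$ in $L^2$; two nested subsequences give fibrewise $L^2$-convergence of $\hat f_p$ to $h$ and, via the assumed fibrewise closability, fibrewise convergence of $\mathfrak{E}_{r,\ell,k,\xi}^{\sfn,\sfm}(f_p,f_p)$ to $0$; identifying the two yields $h=0$, and $L^2$-convergence of $\hat f_p$ to $0$ is exactly the desired conclusion. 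In short, the paper uses the completeness of the big $L^2$ space of square fields and never needs dominated convergence. You instead work entirely with the scalar disintegration $\mathfrak{E}_{r,\ell,k}^{\sfn,\sfm}=\int\mathfrak{E}_{r,\ell,k,\xi}^{\sfn,\sfm}\,d\mu_{\ell,k}^{\sfm}$: you pass to a geometrically fast subsequence, build the summable majorant $h(\xi)$, use it both to get a.s.\ fibrewise Cauchyness and as (part of) an integrable dominating function, then close with dominated convergence and the standard ``Cauchy sequence with a null subsequence is null'' observation. Your argument is more portable, in that it only uses that the form disintegrates and each fibre form satisfies the seminorm triangle inequality, not the specific $L^2$ representation; the paper's argument is shorter and sidesteps both the domination step and the reduction to a subsequence, at the cost of exploiting the explicit structure of $\mathfrak{E}_{r,\ell,k}^{\sfn,\sfm}$. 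One small point worth making explicit in your write-up: the Cauchy--Schwarz/Jensen step $\int\mathfrak{E}_{r,\ell,k,\xi}^{\sfn,\sfm}(g,g)^{1/2}\,\mu_{\ell,k}^{\sfm}(d\xi)\le\mathfrak{E}_{r,\ell,k}^{\sfn,\sfm}(g,g)^{1/2}$ uses that $\mu_{\ell,k}^{\sfm}$ has total mass at most one, which holds because $\mu_{\ell,k}^{\sfm}\le\mu_\ell^{\sfm}=\mu(\cdot\cap\frakM_{b_\ell,\sfm})$ is a subprobability; and the finiteness $\mathfrak{E}_{r,\ell,k,\xi}^{\sfn,\sfm}(f_1,f_1)<\infty$ a.s.\ that anchors the dominating function comes from $\mathfrak{E}_{r,\ell,k}^{\sfn,\sfm}(f_1,f_1)\le\mathfrak{E}(f_1,f_1)<\infty$, which follows from the monotonicity of $\Eq$ together with Lemma \ref{lem.2.2}.
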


\begin{proof}
Let $\{ f_{p} \}$ be a $\mathfrak{E}_{r, \ell, k}^{\sfn, \sfm}$-Cauchy sequence in $\Dinf$ such that $\lim_{p \to \infty} \| f_p \|_{L^2(\frakM, \mu_{\ell, k}^{\sfm})}=0$. By the definition of $\mu_{\ell, k, \xi}^{\sfm}$
\begin{equation*}
\| f \|_{L^2(\frakM_{\ell, \sfm}, \mu_{\ell, k}^{\sfm})}^2 = \int_{\frakM} \| f \|_{L^2(\frakM_{\ell, \sfm}, \mu_{\ell, k, \xi}^{\sfm})}^2 \mu_{\ell, k}^{\sfm}(d\xi). 
\end{equation*}
Hence we have 
\begin{align}
\lim_{p, q \to \infty} \int_{\frakM} \mathfrak{E}_{r, \ell, k, \xi}^{\sfn, \sfm}(f_p-f_q, f_p-f_q) \mu_{\ell, k}^{\sfm}(d\xi) &= 0 \label{ueshiki} \\
\lim_{p \to \infty} \int_{\frakM} \| f_p \|_{L^2(\frakM_{\ell, \sfm}, \mu_{\ell, k, \xi}^{\sfm})}^2 \mu_{\ell, k}^{\sfm}(d\xi) &= 0. \label{shitashiki}
\end{align}
By the definition of $\mathfrak{E}_{r, \ell, k, \xi}^{\sfn, \sfm}$ we have 
\begin{align}
&\int_{\frakM_{\ell, \sfm}} \mathfrak{E}_{r, \ell, k, \xi}^{\sfn, \sfm}(f_p-f_{p+1}, f_p-f_{p+1}) \mu_{\ell, k}^{\sfm}(d\xi) \label{L2kakikae}\\
&= \int_{\frakM_{\ell, \sfm}} \mu_{\ell, k}^{\sfm}(d\xi) \int_{U_{b_r}^{\sf{n}} \times (U_{b_\ell}\setminus U_{b_r})^{\sfm-\sfn}} \munm(d\x_{\sfm}) \notag \\
&\quad \times \int_{U_{b_{\ell}}} \sum_{j=1}^{\sfm} \{ \nadeffsa{j}{y}(f_p)_{\ell, \xi}^{\sfm}(\x_{\sfm}) - \nadeffsa{j}{y}(f_{p+1})_{\ell, \xi}^{\sfm}(\x_{\sfm}) \}^2 \notag \\
&\quad \times \1\{(x_j \in U_{b_r}, y \in U_{b_{\ell}}) \cup (x_j \in U_{b_\ell}\setminus U_{b_r}, y \in U_{b_r})\} \nu(\xi, x_j; y) dy. \notag
\end{align}
For a function $f$ on $\frakM_{\ell, \sfm}$, we set a vector valued function $(\hat{f}(\xi, \x_{\sfm}, y; j))_{j=1}^{\sfm}$ on $\frakM_{\ell, \sfm} \times U_{b_r}^{\sfn} \times U_{b_{\ell}}^{\sfm-\sfn} \times U_{b_{\ell}}$ defined by
\begin{equation*}
\hat{f}(\xi, \x_{\sfm}, y; j) = \nadeffsa{j}{y}f_{\ell, \xi}^{\sfm}(\x_{\sfm}) \1\{(x_j \in U_{b_r}, y \in U_{b_{\ell}}) \cup (x_j \in U_{b_\ell}\setminus U_{b_r}, y \in U_{b_r})\} \sqrt{\nu(\xi, x_j; y)}. 
\end{equation*}
Hence by (\ref{L2kakikae}) we have 
\begin{align*}
&\int_{\frakM_{\ell, \sfm}} \mathfrak{E}_{r, \ell, k, \xi}^{\sfn, \sfm}(f_p-f_{p+1}, f_p-f_{p+1}) \mu_{\ell, k}^{\sfm}(d\xi) \\
&= \int_{\frakM_{\ell, \sfm}} \mu_{\ell, k}^{\sfm}(d\xi) \int_{U_{b_r}^{\sf{n}} \times (U_{b_\ell}\setminus U_{b_r})^{\sfm-\sfn}} \munm(d\x_{\sfm}) \notag \\
&\quad \times \int_{U_{b_{\ell}}} \sum_{j=1}^{\sfm} \{ \hat{f_p}(\xi, \x_{\sfm}, y;j) - \hat{f_{p+1}}(\xi, \x_{\sfm}, y;j) \}^2 dy. \notag
\end{align*}
Combining this with (\ref{ueshiki}), we see that the sequence $\{ \hat{f_p} \}$ of functions on $\frakM_{\ell, \sfm} \times U_{b_r}^{\sf{n}} \times (U_{b_\ell}\setminus U_{b_r})^{\sfm-\sfn} \times U_{b_{\ell}}$ is a Cauchy sequence in $L^2(\frakM_{\ell, \sfm} \times U_{b_r}^{\sf{n}} \times (U_{b_\ell}\setminus U_{b_r})^{\sfm-\sfn} \times U_{b_{\ell}}, \mu_{\ell, k}^{\sfm}(d\xi) \otimes \munm(d\x_{\sfm}) \otimes dy)$. 
Therefore, there exists a vector valued function $h \in L^2(\frakM_{\ell, \sfm} \times U_{b_r}^{\sf{n}} \times (U_{b_\ell}\setminus U_{b_r})^{\sfm-\sfn} \times U_{b_{\ell}}, \mu_{\ell, k}^{\sfm}\otimes \munm \otimes dy)$ such that 
$$
\| \hat{f_p} - h \|_{L^2(\frakM_{\ell, \sfm} \times U_{b_r}^{\sf{n}} \times (U_{b_\ell}\setminus U_{b_r})^{\sfm-\sfn} \times U_{b_{\ell}}, \mu_{\ell, k}^{\sfm} \otimes \munm \otimes dy)} \to 0 \mbox{ as $p \to \infty$}. 
$$
Hence we can choose a subsequence $\{ f^{(1)}_p \}$ of $\{ f_p \}$ such that 
\begin{equation} \label{dai1noshiki}
\lim_{p \to \infty} \int_{U_{b_r}^{\sf{n}} \times (U_{b_\ell}\setminus U_{b_r})^{\sfm-\sfn}} \munm(d\x_{\sfm}) \int_{U_{b_{\ell}}} \sum_{j=1}^{\sfm} \{ \hat{f^{(1)}_p}(\xi, \x_{\sfm}, y;j) - h(\xi, \x_{\sfm}, y;j) \}^2 dy = 0,
\end{equation}
$\mu_{\ell, k}^{\sfm}$-a.s. $\xi$. 
By (\ref{ueshiki}) and (\ref{shitashiki}) we can choose a subsequence $\{ f^{(2)}_p \}$ of $\{ f^{(1)}_p \}$ such that 
\begin{align*}
\lim_{p, q \to \infty} \mathfrak{E}_{r, \ell, k, \xi}^{\sfn, \sfm}(f^{(2)}_p-f^{(2)}_q, f^{(2)}_p-f^{(2)}_q) &= 0, \quad \text{$\mu_{\ell, k}^{\sfm}$-a.s. $\xi$, } \\ 
\lim_{p \to \infty} \| f^{(2)}_p \|_{L^2(\frakM_{\ell, \sfm}, \mu_{\ell, k, \xi}^{\sfm})}^2 &= 0, \quad \text{$\mu_{\ell, k}^{\sfm}$-a.s. $\xi$. } 
\end{align*}
Therefore, by assumption, we have 
\begin{equation} \label{f2pconv0}
\lim_{p \to \infty} \mathfrak{E}_{r, \ell, k, \xi}^{\sfn, \sfm}(f^{(2)}_p, f^{(2)}_p) = 0 \quad \text{for $\mu_{\ell, k}^{\sfm}$-a.s. $\xi$. }
\end{equation}
By (\ref{dai1noshiki}), (\ref{f2pconv0}) and the definition of $\mathfrak{E}_{r, \ell, k, \xi}^{\sfn, \sfm}$, we obtain $h = 0$  for $\mu_{\ell, k}^{\sfm} \otimes \munm \otimes dy$-a.e. $(\xi, \x_{\sfm}, y)$. Thus we have 
\begin{equation*}
\| \hat{f_p} \|_{L^2(\frakM_{\ell, \sfm} \times U_{b_r}^{\sf{n}} \times (U_{b_\ell}\setminus U_{b_r})^{\sfm-\sfn} \times U_{b_{\ell}},\mu_{\ell, k}^{\sfm} \otimes \munm \otimes dy)} \to 0 \quad \text{as $p \to \infty$.}
\end{equation*}
By the definition of $\mathfrak{E}_{r, \ell, k}^{\sfn, \sfm}$, this implies $\lim_{p \to \infty} \mathfrak{E}_{r, \ell, k}^{\sfn, \sfm}(f_p, f_p) = 0$. Thus the proof is completed. 
\end{proof}


We set 
\begin{equation*}
\mathfrak{E}^{\infty}(f, f) = \lim_{r \to \infty} \lim_{\ell \to \infty} \sum_{\sfm=0}^{\infty} \sum_{\sfn=0}^{\sfm} \lim_{k \to \infty} \left( \int_{\frakM} \lim_{q \to \infty} \mathfrak{E}_{r, \ell, q, k, \xi}^{\sfn, \sfm}(f, f) \mu_{\ell, k}^{\sfm}(d\xi) \right) \quad \text{for any $f \in \Dinf$}. 
\end{equation*}
Note that $\Eq$ is increasing in $\sfn, \sfm, r, \ell, q, k$. 
Combining Lemma \ref{lem 5.3} with Lemma \ref{O96lem2.1.1} (1) and Lemma \ref{lem 5.4} we have the following lemma as a second step. 

\begin{lemma} \label{Ersfn}
Assume that (A.3) and (A.4) holds. Then $( \mathfrak{E}^{\infty}, \mathfrak{D}^{\infty})$ is closable on $L^2(\frakM, \mu)$. 
\end{lemma}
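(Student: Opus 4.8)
The plan is to assemble the closability of $(\mathfrak{E}^{\infty}, \mathfrak{D}^{\infty})$ from the three preceding lemmas by iterated application of the monotone-limit principle Lemma \ref{O96lem2.1.1}(1). First I would fix $r, \ell, k, \sfn, \sfm, q$ and invoke Lemma \ref{lem 5.4}: under (A.3)--(A.4) the form $(\mathfrak{E}_{r,\ell,q,k,\xi}^{\sfn,\sfm}, \Dinf)$ is closable on $L^2(\frakM_{\ell,\sfm}, \mu_{\ell,k,\xi}^{\sfm})$ for $\mu_{\ell,k}^{\sfm}$-a.s.\ $\xi$. Since $\{\nu_q\}$, $\{\Lambda_q(\x_{\sfm})\}$ and $\{\Lambda_q(y\mid\x_{\sfm}^{\hiku j})\}$ are nondecreasing in $q$, the family $\{\mathfrak{E}_{r,\ell,q,k,\xi}^{\sfn,\sfm}\}_{q\in\N}$ is increasing on the common domain $\Dinf$; applying Lemma \ref{O96lem2.1.1}(1) in the variable $q$ yields that the monotone limit $\mathfrak{E}_{r,\ell,k,\xi}^{\sfn,\sfm}(f,f) = \lim_{q\to\infty}\mathfrak{E}_{r,\ell,q,k,\xi}^{\sfn,\sfm}(f,f)$ is closable on $L^2(\frakM_{\ell,\sfm}, \mu_{\ell,k,\xi}^{\sfm})$, again for $\mu_{\ell,k}^{\sfm}$-a.s.\ $\xi$. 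Then Lemma \ref{lem 5.3} upgrades this pointwise-in-$\xi$ closability to closability of the averaged form $(\mathfrak{E}_{r,\ell,k}^{\sfn,\sfm}, \Dinf)$ on $L^2(\frakM, \mu_{\ell,k}^{\sfm})$.

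Next I would carry out the remaining monotone limits one at a time, in the order $k \to \infty$, then $\sfn$-sum, then $\sfm$-sum, then $\ell\to\infty$, then $r\to\infty$, each time checking the two hypotheses of Lemma \ref{O96lem2.1.1}(1): that the forms are closable (just established, or established at the previous stage) and that the family is increasing on a common domain with $\Dinf$ contained in every domain. Monotonicity in each of $\sfn, \sfm, r, \ell, q, k$ is exactly the remark recorded just before the lemma statement ("$\Eq$ is increasing in $\sfn, \sfm, r, \ell, q, k$"), which comes from the inclusions $U_{b_r}\subset U_{b_\ell}$ and from $\mu_{r,k}^{\sfn}\le\mu_{r,k+1}^{\sfn}$ in Definition \ref{def 5.1}; summing over $\sfn$ and over $\sfm$ only adds nonnegative terms, so those partial sums are increasing too. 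At each stage the limiting form is defined with domain $\{f\in\bigcap(\text{previous domains}); \sup(\text{previous energies})<\infty\}$, which always contains the relevant subset of $\Dinf$, so the chain of applications is legitimate and terminates at $(\mathfrak{E}^{\infty}, \mathfrak{D}^{\infty})$ being closable on $L^2(\frakM, \mu)$. (Here one uses that the limit measure of $\mu_{\ell,k}^{\sfm}$ as $k\to\infty$ is $\mu(\cdot\cap\frakM_{b_\ell,\sfm})$ and that summing over $\sfm$ reconstitutes $\mu$, so the final ambient $L^2$ space is $L^2(\frakM,\mu)$.)

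The only genuinely delicate point is bookkeeping of the interchange between the $\xi$-integration producing $\mathfrak{E}_{r,\ell,k}^{\sfn,\sfm}$ and the subsequent monotone limits in $k, \ell, r$ and the sums in $\sfn, \sfm$: one must make sure that the "closable $\Rightarrow$ closable" step of Lemma \ref{lem 5.3} is applied only to forms already known to be closable at the previous level, and that the monotone-convergence theorem justifies pulling each limit through $\int_{\frakM}(\cdots)\mu_{\ell,k}^{\sfm}(d\xi)$ — which is automatic since all integrands are nonnegative and monotone. I expect this to be the main obstacle only in the sense of careful sequencing; once the order of limits is fixed as above, each step is a direct citation of Lemma \ref{lem 5.4}, Lemma \ref{lem 5.3}, or Lemma \ref{O96lem2.1.1}(1), with the monotonicity remark supplying the increasing-family hypothesis.
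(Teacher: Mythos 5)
Your proposal reproduces exactly the paper's (one-line) argument: establish closability of each $(\mathfrak{E}_{r,\ell,q,k,\xi}^{\sfn,\sfm},\Dinf)$ via Lemma \ref{lem 5.4}, pass to the $q\to\infty$ limit and then to the $\xi$-integrated form by Lemma \ref{lem 5.3}, and run the remaining monotone limits and sums through Lemma \ref{O96lem2.1.1}(1) using the recorded monotonicity of $\Eq$ in $\sfn,\sfm,r,\ell,q,k$. This is the same decomposition and the same sequencing the paper uses, so no further comparison is needed.
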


As a final step, we show Theorem \ref{CQG}. 
We show the following lemma.
\begin{lemma}\label{lem.2.2}
We have $\mathfrak{E}^{\infty}(f, f) = \mathfrak{E}(f, f) < \infty$ for all $f \in \Dinf$.
\end{lemma}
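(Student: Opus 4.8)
The plan is to unwind the iterated limits and sums defining $\mathfrak{E}^{\infty}$ from the inside out, identifying the functional obtained at each stage. Since $\Eqban{1},\Eqban{2},\Eqban{3}\ge 0$ and, as recorded just before Lemma~\ref{Ersfn}, $\mathfrak{E}_{r,\ell,q,k,\xi}^{\sfn,\sfm}$ is monotone increasing in each of $\sfn,\sfm,r,\ell,q,k$, every interchange of $\lim$, $\sum$ and $\int$ below is an instance of the monotone convergence theorem; the only genuine work is a bookkeeping of which jumps $x_j\to y$ are counted.

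First take $q\to\infty$. Since $\nu_q\uparrow\nu$ $\mu$-a.s.\ and $dx$-a.e., while $\Lambda_q(\x_{\sfm})\uparrow 1$ and $\Lambda_q(y\,|\,\x_{\sfm}^{\hiku{j}})\uparrow 1$ pointwise (the partial Hamiltonians occurring there being finite $\munm$-a.e., as $\munm$ has a density with respect to $e^{-\mathcal{H}_{\ell}}d\x_{\sfm}$), monotone convergence shows $\lim_{q\to\infty}\mathfrak{E}_{r,\ell,q,k,\xi}^{\sfn,\sfm}(f,f)$ is the form obtained from $\Eqban{1}+\Eqban{2}+\Eqban{3}$ by erasing every $q$-cutoff. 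Integrating this against $\mu_{\ell,k}^{\sfm}(d\xi)$ and unwinding the definition of $\munm$ by Fubini, which is precisely the computation carried out in (\ref{L2kakikae}) with $f$ in place of $f_p-f_{p+1}$, collapses the conditioning and yields
\begin{equation*}
\mathfrak{E}_{r,\ell,k}^{\sfn,\sfm}(f,f)=\int_{\frakM_{b_r,\sfn}\cap\frakM_{b_{\ell},\sfm}}\Big(\sum_{j=1}^{\sfm}\int_{U_{b_{\ell}}}\big\{\nadeffsa{j}{y}f_{\ell,\xi}^{\sfm}(\x_{\sfm})\big\}^{2}\,\1_{j,y}\,\nu(\xi,x_j;y)\,dy\Big)\mu_{\ell,k}^{\sfm}(d\xi),
\end{equation*}
with $\1_{j,y}=\1\{(x_j\in U_{b_r},\ y\in U_{b_{\ell}})\cup(x_j\in U_{b_{\ell}}\setminus U_{b_r},\ y\in U_{b_r})\}$.

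Now take $k\to\infty$: the set-wise monotone limit $\lim_k\mu_{\ell,k}^{\sfm}$ exists as a finite measure, and, together with the weak convergence in Definition~\ref{def 5.1} and the fact that a finite Borel measure on the Polish space $\frakM$ is determined by its integrals against bounded continuous functions, it must be $\mu(\,\cdot\cap\frakM_{b_{\ell},\sfm})$; hence monotone convergence replaces $\mu_{\ell,k}^{\sfm}$ by $\mu(\,\cdot\cap\frakM_{b_{\ell},\sfm})$ in the display. Summing the nonnegative terms over $0\le\sfn\le\sfm$ (since $\{\frakM_{b_r,\sfn}\cap\frakM_{b_{\ell},\sfm}\}_{\sfn=0}^{\sfm}$ partitions $\frakM_{b_{\ell},\sfm}$), then over $\sfm\ge0$ (since $\{\frakM_{b_{\ell},\sfm}\}_{\sfm}$ partitions $\frakM$), and then letting $\ell\to\infty$ (whereupon $\1_{j,y}$ increases to $\1\{x_j\in U_{b_r}\}+\1\{x_j\notin U_{b_r},\ y\in U_{b_r}\}$ and the $j$-sum exhausts $\xi$), monotone convergence turns the expression into $\int_{\frakM}\big(\sum_{x_i\in U_{b_r}}\int_{S}\{f(\xi^{x_i y})-f(\xi)\}^{2}\nu(\xi,x_i;y)\,dy+\sum_{x_i\notin U_{b_r}}\int_{U_{b_r}}\{f(\xi^{x_i y})-f(\xi)\}^{2}\nu(\xi,x_i;y)\,dy\big)\,\mu(d\xi)$. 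Here the local nature of $f$ enters: taking $r$ large enough that $f$ is $\sigma[\pi_{b_r}]$-measurable (possible since $f\in\Dc\subset\mathfrak{B}_{\infty}$ and $b_r\uparrow\infty$), one has $f(\xi^{x_i y})=f(\xi)$ whenever $x_i,y\notin U_{b_r}$, so in the second sum $\int_{U_{b_r}}$ may be replaced by $\int_{S}$; the bracket then collapses to $\sum_i\int_{S}\{f(\xi^{x_i y})-f(\xi)\}^{2}\nu(\xi,x_i;y)\,dy$, which equals $2\D[f,f](\xi)$ by the definition of the square field and in particular no longer depends on $r$ (the $\tfrac12$ being the normalization in $D^{\sfn}$, which one checks is consistent with that of the $\Eqban{i}$). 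Therefore $\mathfrak{E}^{\infty}(f,f)=\int_{\frakM}\D[f,f](\xi)\,\mu(d\xi)=\mathfrak{E}(f,f)$, which is finite because $f\in\Dinf$.

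The main obstacle is exactly this identification: one must verify that $\Eqban{1},\Eqban{2},\Eqban{3}$ together with $\1_{j,y}$ account precisely for the jumps that stay within, leave, or enter $U_{b_r}$, and that the jumps between two points both outside $U_{b_r}$, invisible to the square field of a $\sigma[\pi_{b_r}]$-measurable function, are correctly omitted, with combinatorial multiplicities and normalizing constant matching $D^{\sfn}$. One should also be careful that the coordinate measure $\munm$ itself depends on $k$ through $\mu_{\ell,k,\xi}^{\sfm}$, so the passage $k\to\infty$ has to be handled at the level of the pair $(\mu_{\ell,k}^{\sfm},\munm)$; once these points are settled, the proof is a chain of monotone convergence arguments.
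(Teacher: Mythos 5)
Your proof follows essentially the same route as the paper: the paper shortcuts the explicit unwinding by introducing auxiliary forms $\mathfrak{E}_r=\int_{\frakM}\D_r[\cdot,\cdot]\,d\mu$ built from the square fields $\D_r^{\sfn,1},\D_r^{\sfn,2},\D_r^{\sfn,3}$ (exactly your classification of jumps that stay inside, leave, or enter $U_{b_r}$), asserts the identity $\mathfrak{E}_r(f,f)=\mathfrak{E}^r(f,f)$ which your chain of monotone-convergence arguments proves, and then uses the locality of $f$ to conclude $\lim_{r\to\infty}\mathfrak{E}^r(f,f)=\mathfrak{E}(f,f)$. One small caution, which you flag yourself: your displayed bracket unwinds to $\sum_i\int_S\{f(\xi^{x_iy})-f(\xi)\}^2\nu(\xi,x_i;y)\,dy=2\D[f,f](\xi)$ while your conclusion reads $\mathfrak{E}^{\infty}(f,f)=\int_{\frakM}\D[f,f]\,d\mu$; this factor-of-two mismatch between the $\tfrac12$ in the definition of $D^{\sfn}$ and the absence of any $\tfrac12$ in $\Eqban{1},\Eqban{2},\Eqban{3}$ and in $\D_r^{\sfn,i}$ is a normalization discrepancy already present in the paper's own definitions and is not something your argument introduces.
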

\begin{proof}
We consider the following square fields on $\Dinf$: for $f, g \in \Dinf$, $\xi\in \frakM_{r,\sfn}$, $r\in\N$, let
\begin{align*}
\mathbb{D}_r^{{\sfn}, 1}[f, g](\xi) &:= \sum_{j=1}^{\sfn} \int_{U_{b_r}} \nadeffsa{j}{y}f_{r, \xi}^{\sfn}(\x_{r, \sfn}(\xi)) \cdot \nadeffsa{j}{y}g_{r, \xi}^{\sfn}(\x_{r, \sfn}(\xi)) \nu(\xi, x_{r, \sfn}^j(\xi); y) dy,
\\
\mathbb{D}_r^{{\sfn}, 2}[f, g](\xi) &:= \sum_{j=1}^{\sfn} \int_{U_{b_r}^c} \nadeffsa{j}{y}f_{r, \xi}^{\sfn}(\x_{r, \sfn}(\xi)) \cdot \nadeffsa{j}{y}g_{r, \xi}^{\sfn}(\x_{r, \sfn}(\xi)) \nu(\xi, x_{r, \sfn }^j(\xi); y) dy, 
\end{align*}
and
\begin{equation*}
\mathbb{D}_r^{{\sfn}, 3}[f, g](\xi) := \int_{U_{b_r}^c} \xi(dx) \int_{U_{b_r}} \nadeffsa{x}{y} f_{r, \xi}^{\sfn}(\x_{r, \sfn}(\xi)) \cdot \nadeffsa{x}{y}g_{r, \xi}^{\sfn}(\x_{r, \sfn}(\xi)) \nu(\xi, x; y)dy,
\end{equation*}
where we set
\begin{equation*}
\nadeffsa{x}{y}f_{r, \xi}^{\sfn}(\x_{\sfn}) = f_{r, \xi \setminus x}^{{\sfn}+1}(\x_{\sfn} \cdot y) - f_{r, \xi}^{\sfn}(\x_{\sfn}), \quad \text{if $\xi_{U_{b_r}^c}(x) \geq 1$, $y \in U_{b_r}$}. 
\end{equation*}
We put 
\begin{equation*}
\mathbb{D}_r^{\sfn}[f, g](\xi) = \begin{cases} \sum_{i=1}^3 \mathbb{D}_r^{{\sfn}, i}[f, g](\xi)  & \xi \in \frakM_{r, \sfn}, \\
 0 & \xi \notin \frakM_{r, \sfn}. \end{cases}
\end{equation*}
and 
\begin{equation*}
\mathbb{D}_r[f, g](\xi) = \sum_{\sfn =0}^{\infty} \mathbb{D}_r^{\sfn}[f, g](\xi). 
\end{equation*} 
Then we consider the bilinear forms on $\Dinf$ defined as $\mathfrak{E}_r(f, g) := \int_{\frakM} \mathbb{D}_r[f, g](\xi)\mu(d\xi)$ for $f, g \in \Dinf$. In addition we set
\begin{equation*}
\mathfrak{E}^r(f, g) = \lim_{\ell \to \infty} \sum_{\sfm=0}^{\infty} \sum_{\sfn=0}^{\sfm} \lim_{k \to \infty} \left( \int_{\frakM} \lim_{q \to \infty} \mathfrak{E}_{r, \ell, q, k, \xi}^{\sfn, \sfm}(f, g) \mu_{\ell, k}^{\sfm}(d\xi) \right) \quad \text{for any $f, g \in \Dinf$}. 
\end{equation*}
Then we can see 
\begin{equation} \label{2tsuonaji}
\mathfrak{E}_r(f, f) = \mathfrak{E}^r(f, f) \quad \text{for any $f \in \Dinf$.}
\end{equation}
Let $f \in \Dinf \cap \mathfrak{B}_r^{{\rm bdd}}$. Then we have $\sum_{{\sfn}=0}^{\infty} \D_r^{\sfn}[f, f](\xi) = \sum_{{\sfn}=0}^{\infty} \D_{\ell}^{\sfn}[f, f](\xi)$ for all $r \leq \ell$ and $\mu$-a.s. $\xi$. Then we have $\mathfrak{E}_r(f, f) = \mathfrak{E}_\ell(f, f) = \mathfrak{E}(f, f)$. Combining this with (\ref{2tsuonaji}) we can have $\lim_{r \to \infty} \mathfrak{E}^r(f, f) = \mathfrak{E}(f, f)$. 
Thus the proof is completed. 
\end{proof}

From Lemma \ref{O96lem2.1.1} (1) we have that $(\mathfrak{E}^{\infty}, \mathfrak{D}^{\infty})$ is closable, where we set
$$ \mathfrak{E}^{\infty}(f, f) = \lim_{r \to \infty} \mathfrak{E}^{r}(f, f), \quad \mathfrak{D}^{\infty} = \left\{ f \in \Dinf; \sup_{r \in \N}\mathfrak{E}^r(f, f) < \infty \right\}. $$
Combining this with Lemma \ref{lem.2.2} we see that $\mathfrak{E}^{\infty}(f, f) = \mathfrak{E}(f, f)$ for $f \in \Dinf$. Thus $\mathfrak{D}^{\infty} = \Dinf$. Then we show that $(\mathfrak{E}, \Dinf)$ is closable. Thus the proof of Theorem \ref{CQG} is completed. 

\begin{remark}
When $(\mathfrak{E}_r, \Dinf)$ is closable, we denote its closure by $(\mathfrak{E}_r, \hat{\mathfrak{D}}_r)$,  for $r \in \N$. Since $\{(\mathfrak{E}_r, \hat{\mathfrak{D}}_r)\}_{r\in\N}$ is increasing,  $(\hat{\mathfrak{E}}_{\infty}, \hat{\mathfrak{D}}_{\infty})$ is closed by Lemma \ref{O96lem2.1.1} (2), where 
\begin{equation*}
\hat{\mathfrak{E}}_{\infty}(f, f) = \lim_{r \to \infty} \mathfrak{E}_r(f, f), \quad \hat{\mathfrak{D}}_{\infty} = \{ f \in \bigcap_{r \in \N} \hat{\mathfrak{D}}_r; \sup_{r \in \N}\mathfrak{E}_{r}(f, f) < \infty \}.
\end{equation*}
It is clear that $\mathfrak{D}_{\infty} \subset \hat{\mathfrak{D}}_{\infty}$ and $\hat{\mathfrak{E}}_{\infty}(f, f) = \mathfrak{E}(f, f)$ for $f \in \Dinf$. 
However $(\hat{\mathfrak{E}}_{\infty}, \hat{\mathfrak{D}}_{\infty})$ is not necessary to be the closure $(\mathfrak{E}, \mathfrak{D})$  of $(\mathfrak{E}, \Dinf)$.  $(\mathfrak{E}, \mathfrak{D})$  coincides with the decreasing limit of the closure of $(\mathfrak{E}_r, \Dinf \cap \mathfrak{B}_r^{{\rm bdd}})$ (See \cite[Lemma 2.1 (3), Lemma 2.2 (3)]{O96}). 
\end{remark}

\section{Proof of Theorem \ref{Theorem.1}}

In this section we prove Theorem \ref{Theorem.1}.
Let $(\mathfrak{E}, \mathfrak{D})$ be the closure of $(\mathfrak{E}, \Dinf)$. We first examine the Markov property of the closed forms $(\mathfrak{E}, \mathfrak{D})$. 

\begin{lemma} 
$(\mathfrak{E}, \mathfrak{D})$ is Markovian. Then $(\mathfrak{E}, \mathfrak{D})$ is a Dirichlet form. 
\end{lemma}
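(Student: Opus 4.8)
The plan is to verify the Markov property directly on the dense subspace $\Dinf$ via the standard unit-contraction criterion, then transfer it to the closure $(\mathfrak{E},\mathfrak{D})$ by approximation. Concretely, I recall the characterization (see \cite{FOT,MR}): a closed symmetric form $(\mathfrak{E},\mathfrak{D})$ is Markovian provided that for every $\varepsilon>0$ there is a smooth contraction $\varphi_\varepsilon:\R\to\R$ with $\varphi_\varepsilon(t)=t$ on $[0,1]$, $-\varepsilon\le\varphi_\varepsilon\le 1+\varepsilon$, and $0\le\varphi_\varepsilon(s)-\varphi_\varepsilon(t)\le s-t$ for $s>t$, such that $f\in\mathfrak{D}$ implies $\varphi_\varepsilon(f)\in\mathfrak{D}$ and $\mathfrak{E}(\varphi_\varepsilon(f),\varphi_\varepsilon(f))\le\mathfrak{E}(f,f)$. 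Since $\mathfrak{E}$ is of pure-jump type with square field built from the discrete gradients $\nadeffsa{j}{y}$, the key algebraic fact is that for any contraction $\varphi$ and any $f$,
\begin{equation*}
\bigl|\nadeffsa{j}{y}(\varphi\circ f)^{\sfn}(\x_{\sfn})\bigr| = \bigl|\varphi(f^{\sfn}(\ldots,y,\ldots))-\varphi(f^{\sfn}(\x_{\sfn}))\bigr| \le \bigl|\nadeffsa{j}{y}f^{\sfn}(\x_{\sfn})\bigr|,
\end{equation*}
so that $D^{\sfn}[(\varphi\circ f)^{\sfn},(\varphi\circ f)^{\sfn}]\le D^{\sfn}[f^{\sfn},f^{\sfn}]$ pointwise, hence $\D[\varphi\circ f,\varphi\circ f](\xi)\le\D[f,f](\xi)$ for $\mu$-a.e.\ $\xi$, and integrating gives $\mathfrak{E}(\varphi\circ f,\varphi\circ f)\le\mathfrak{E}(f,f)$.

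First I would check that $\varphi_\varepsilon\circ f\in\Dinf$ when $f\in\Dinf$: this holds because composing a local smooth function with a smooth bounded function preserves locality (the $\sigma[\pi_r]$-measurability is unchanged) and smoothness of each $U_r^{\sfn}$-representation, while $\varphi_\varepsilon\circ f\in L^2(\frakM,\mu)$ is immediate from boundedness of $\varphi_\varepsilon$, and $\mathfrak{E}(\varphi_\varepsilon\circ f,\varphi_\varepsilon\circ f)\le\mathfrak{E}(f,f)<\infty$ by the contraction estimate above. This establishes the unit-contraction property on the form core $\Dinf$. Next I would pass to the closure: given $f\in\mathfrak{D}$, take $f_n\in\Dinf$ with $f_n\to f$ in the $\mathfrak{E}_1$-norm; then $\varphi_\varepsilon\circ f_n\to\varphi_\varepsilon\circ f$ in $L^2(\mu)$ (Lipschitz continuity of $\varphi_\varepsilon$), and $\sup_n\mathfrak{E}(\varphi_\varepsilon\circ f_n,\varphi_\varepsilon\circ f_n)\le\sup_n\mathfrak{E}(f_n,f_n)<\infty$; by the Banach--Saks / weak-compactness argument standard in Dirichlet form theory one extracts Cesàro means converging in $\mathfrak{E}_1$-norm, whence $\varphi_\varepsilon\circ f\in\mathfrak{D}$ and $\mathfrak{E}(\varphi_\varepsilon\circ f,\varphi_\varepsilon\circ f)\le\liminf_n\mathfrak{E}(\varphi_\varepsilon\circ f_n,\varphi_\varepsilon\circ f_n)\le\mathfrak{E}(f,f)$ by lower semicontinuity. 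Finally, letting $\varepsilon\downarrow0$ (or directly using the normal-contraction form of the criterion) yields that the unit contraction $f\mapsto(0\vee f)\wedge 1$ operates on $(\mathfrak{E},\mathfrak{D})$, so the form is Markovian; since it is already closed, it is a Dirichlet form.

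The main obstacle, and the only point requiring genuine care, is the passage from the core to the closure: one must ensure $\varphi_\varepsilon\circ f$ lands in the \emph{domain} $\mathfrak{D}$ and not merely that its formal energy is finite. The square-field representation $\D[f,f](\xi)=\lim_{\ell\to\infty}\D[f,f](\pi_\ell(\xi))$ is defined a priori only on $\Dc$, so for $f\in\mathfrak{D}\setminus\Dc$ the energy must be understood through the closed form, and the Banach--Saks extraction is what bridges this gap; I would handle it exactly as in the standard treatment of Markov property for closed forms in \cite[\S I.4]{MR}. Everything else is the routine pointwise contraction inequality displayed above, which goes through verbatim because the interaction enters only through the nonnegative weight $\nu(\x_{\sfn},x_j;y)$ multiplying squared gradients and is therefore untouched by post-composition with $\varphi_\varepsilon$.
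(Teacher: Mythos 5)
Your proposal is correct and takes essentially the same route as the paper: establish the pointwise contraction estimate $\D[\varphi_\varepsilon\circ f,\varphi_\varepsilon\circ f]\le\D[f,f]$ for $f\in\Dinf$ (the paper does this via the mean value theorem and $|\varphi_\varepsilon'|\le 1$, you via the equivalent Lipschitz bound on increments), integrate to get $\mathfrak{E}(\varphi_\varepsilon\circ f,\varphi_\varepsilon\circ f)\le\mathfrak{E}(f,f)$ on the form core, and then transfer to the closure. The only difference is that the paper simply cites Proposition I.4.10 of Ma--R\"ockner for the passage from the core to the closed form, whereas you unfold that proposition's Banach--Saks argument explicitly.
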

\begin{proof}
For $\varepsilon > 0$ there exists $\varphi_{\varepsilon} \in C^{\infty}(\mathbb{R})$ such that $\varphi_{\varepsilon}(t) = t$ for all $t \in [0, 1]$, $\varphi_{\varepsilon}(t) \in [-\varepsilon, 1+\varepsilon]$ and $|\varphi_{\varepsilon}'(t)| \leq 1$ for all $t \in \mathbb{R}$. By the mean-value theorem we get 
\begin{align}
\D[\varphi_{\e} \circ f, \varphi_{\e} \circ f](\xi) &= \frac{1}{2} \sum_{j=1}^{\infty} \int_{S} \left\{ \nadeffsa{j}{y} ( \varphi_{\e} \circ f )(\xi) \right\}^2 \nu(\xi, x_j; y)dy \label{mvthm1} \\
&= \frac{1}{2} \sum_{j=1}^{\infty} \int_{S} \left\{ \varphi_{\e}'(c_{x_j, y}) \nadeffsa{j}{y} f(\xi) \right\}^2 \nu(\xi, x_j; y)dy, \notag
\end{align}
where $\xi = \sum_{j=1}^{\infty} \delta_{x_j}$ and $c_{x_j, y}$ is a constant depending on $x_j$ and $y$. 
Since $\sup_{t \in \R}|\varphi_{\e}'(t)| \leq 1$ holds, then we get  
\begin{align}
\text{RHS of (\ref{mvthm1})} &\leq \sup_{t \in \R}| \varphi_{\e}'(t)|^2 \cdot \frac{1}{2} \sum_{j=1}^{\infty} \int_{S} \left\{ \nadeffsa{j}{y} f(\xi) \right\}^2 \nu(\xi, x_j; y)dy \label{mvthm2} \\
&\leq \frac{1}{2} \sum_{j=1}^{\infty} \int_{S} \left\{ \nadeffsa{j}{y} f(\xi) \right\}^2 \nu(\xi, x_j; y)dy \leq \D[f, f](\xi). \notag
\end{align}
Then $\varphi_{\varepsilon} \circ f \in \Dinf$ for all $f \in \Dinf$ and from (\ref{mvthm2}) we get 
\begin{equation*}
\mathfrak{E}(\varphi_{\varepsilon} \circ f, \varphi_{\varepsilon} \circ f) 
= \int_{\frakM}  \D[\varphi_{\e} \circ f, \varphi_{\e} \circ f](\xi) \mu(d\xi)
\leq \int_{\frakM}  \D[f, f](\xi) \mu(d\xi) \\
= \mathfrak{E}(f, f). 
\end{equation*}
This implies $(\mathfrak{E}, \mathfrak{D})$ is Markovian (See \cite[Proposition I.4.10]{MR}). 
\end{proof}

We show the quasi-regularity of the Dirichlet form $(\mathfrak{E}, \mathfrak{D})$.
We introduce a mollifier on $\mathfrak{B}_{\infty}^{{\rm bdd}}$.  Let $\mathfrak{j} : \mathbb{R}^d \to \mathbb{R}$ be a non-negative, smooth function such that $\int_{\mathbb{R}^d} \mathfrak{j}(x)dx = 1$ and $\mathfrak{j}(x)=0$ for $|x| \geq \frac{1}{2}$. Let $\mathfrak{j}_{\e}(\cdot) = \e^d\mathfrak{j}(\cdot /\e )$ and $\mathfrak{j}_{\e}^{\sfn}(x_1, \ldots, x_{\sfn}) = \prod_{j=1}^{\sfn} \mathfrak{j}_{\e}(x_j)$. 
For $f \in \mathfrak{B}_r^{{\rm bdd}} \subset \mathfrak{B}_{r+\e}^{{\rm bdd}}$, we put 
\begin{equation*}
\hat{f}_{r+\e}^{\sfn}(\x_{\sfn}) = \mathfrak{j}_{\e}^{\sfn} \ast f_{r+\e, \xi}^{\sfn}(\x_{\sfn}) := \int_{(\R^d)^{\sfn}} \mathfrak{j}_{\e}^{\sfn}(\x_{\sfn}-y)f_{r+\e, \xi}^{\sfn}(y)dy, \quad \x_{\sfn} \in U_r^{\sfn}. 
\end{equation*}
Since $f$ is $\sigma[\pi_r]$-measurable, $f_{r+\e, \xi}^{\sfn}=f_{r+\e}^{\sfn}$ and $\hat{f}_{r+\e}^{\sfn}$ is a $U_{r+\e}$-representation of some $\sigma[\pi_{r+\e}]$-measurable function. We denote the function by $\mathfrak{J}_{r, \e}f$, that is, 
\begin{equation*}
\mathfrak{J}_{r, \e}f(\xi) = \hat{f}_{r+\e}^{\sfn}(\x), \quad \text{if $\xi \in \frakM_{\sfn, r+\e}$, $\sfn \in \N \cup \{ 0 \}$}. 
\end{equation*}

The following lemma is given in \cite[Lemma 2.4]{O96}. 

\begin{lemma}
\begin{enumerate}
\item Let $f \in \mathfrak{B}_{r}^{{\rm bdd}}$. Then we have the following: 
\begin{equation*}
\mathfrak{J}_{r, \e}f \in \Dinf \quad \text{for} \quad \e>0 , \quad \lim_{\e \to 0}\| \mathfrak{J}_{r, \e}f - f\|_{L^2(\frakM, \mu)} = 0. 
\end{equation*}
\item Let $f \in \mathfrak{B}_r^{{\rm bdd}}$ such that $f_r^{\sfn} \in C^{\infty}(U_r^{\sfn})$ for all ${\sfn}$. 
Let $\delta > 0$ and ${\sf N}_{r, \delta} = \{ \xi \in \frakM; \xi(U_{r+\delta}-U_r)=0 \}$. Then 
\begin{equation*}
\lim_{\e \to 0} \mathfrak{J}_{r+\delta, \e}f(\xi) = f(\xi) \quad \text{for all $\xi \in {\sf N}_{r, \delta}$. }
\end{equation*}
\item $\Dinf$ is dense in $L^2(\frakM, \mu)$. 
\end{enumerate}
\end{lemma}

Let $\bd{A}=\{ \bda = \{ a_r \}_{r \in \N}; a_r \in \N, a_r \leq a_{r+1} \ \text{for all $r$} \}$. For $\bda= \{ a_r \}_{r \in \N} \in \bd{A}$, let $\frakM[\bda]= \{ \xi \in \frakM; \xi(\Qr) \leq a_r \ \text{for all $r$} \}$. Then $\frakM[\bda]$ is a compact set. (See \cite[Proposition 3.16]{Ru}. )
Suppose $\bda, \bdb \in \bd{A}$ and $c \in \R$.  we set $\bda_+ = \{ a_{r+1} \}_{r \in \N}$, $c\bda = \{ ca_r \}_{r \in \N}$ and $\bda + \bdb = \{ a_r + b_r \}_{r \in \N}$.  Let $\1$ be a sequence that $a_r =1$ for all $r \in \N$. 

\begin{lemma} \label{teiin}
Assume (B.4). Let $\bda_n = \{ a_{n, r} \}_{r \in \N} = \{ n2^{(d+\kappa)r} \}_{r \in \N} \in \bd{A}$, $n \in \N$. Then we have 
\begin{equation}
\mu\left(\bigcup_{n=1}^{\infty} \frakM[\bda_n] \right) = 1. \label{teiinosae}
\end{equation}
\end{lemma}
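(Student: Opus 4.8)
The plan is to deduce (\ref{teiinosae}) from the first Borel--Cantelli lemma, combining (B.1) (which controls the growth of the mean $m_r:=\E[\xi(\Qr)]$) with (B.4) (which makes the relevant probabilities summable once the radii are chosen geometrically as $2^r$). It suffices to prove that $\sup_{r\in\N}\xi(\Qr)/2^{(d+\kappa)r}<\infty$ for $\mu$-a.s.\ $\xi$: indeed, if this holds then $\xi(\Qr)\le n\,2^{(d+\kappa)r}$ for all $r$ as soon as $n\ge\sup_{r}\xi(\Qr)/2^{(d+\kappa)r}$, i.e.\ $\xi\in\frakM[\bda_n]$, so $\xi\in\bigcup_{n}\frakM[\bda_n]$.

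First I would bound the mean. Since $\rho^1\in L^1_{\mathrm{loc}}$ (which follows from $\E[\xi(U_1)]<\infty$, implicit in (B.4)), $m_r=\int_{\Qr}\rho^1(x)\,dx$ is finite for every $r$; moreover $\Qr=U_{2^r}$ is contained in the Euclidean ball of radius $2^r$, on which $\int|x|^{\kappa}\,dx$ is of order $2^{(d+\kappa)r}$, so (B.1) provides a constant $C_5>0$ with
\begin{equation*}
m_r\le C_5\,2^{(d+\kappa)r}\qquad\text{for all }r\in\N
\end{equation*}
(the finitely many small $r$ being absorbed into $C_5$).

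Next I would apply Chebyshev's inequality. By (B.4) there exist $r_0\in\N$ and $C_6>0$ such that $m_r>0$ and ${\rm Var}[\xi(\Qr)]\le C_6\,2^{-\delta r}m_r^{2}$ for all $r\ge r_0$; hence, for $r\ge r_0$,
\begin{equation*}
\mu\bigl(\xi(\Qr)\ge 2m_r\bigr)\le\frac{{\rm Var}[\xi(\Qr)]}{m_r^{2}}\le C_6\,2^{-\delta r}.
\end{equation*}
Since $\delta>0$ the right-hand side is summable in $r$, so by the Borel--Cantelli lemma there is, for $\mu$-a.s.\ $\xi$, an integer $N(\xi)\ge r_0$ with $\xi(\Qr)<2m_r\le 2C_5\,2^{(d+\kappa)r}$ for every $r\ge N(\xi)$.

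Finally I would handle the remaining finitely many radii: for $r<N(\xi)$ we have $\xi(\Qr)\le\xi(U_{2^{N(\xi)}})<\infty$ since $U_{2^{N(\xi)}}$ is compact and $\xi\in\frakM$. Therefore
\begin{equation*}
n(\xi):=\max\Bigl(\lceil 2C_5\rceil,\ \max_{1\le r<N(\xi)}\bigl\lceil\xi(\Qr)\,2^{-(d+\kappa)r}\bigr\rceil\Bigr)<\infty,
\end{equation*}
and $\xi(\Qr)\le n(\xi)\,2^{(d+\kappa)r}$ for every $r\in\N$, i.e.\ $\xi\in\frakM[\bda_{n(\xi)}]$; since this holds $\mu$-a.s., (\ref{teiinosae}) follows. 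The one step requiring some care is the mean bound $m_r\le C_5\,2^{(d+\kappa)r}$ (in particular the local integrability of $\rho^1$); everything else is a routine Chebyshev/Borel--Cantelli estimate, the key point being that the dyadic choice of radii turns the algebraic decay in (B.4) into an exponentially summable bound.
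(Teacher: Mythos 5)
Your proof is correct and rests on the same three ingredients as the paper's: the mean bound $m_r=O(2^{(d+\kappa)r})$ from (B.1), Chebyshev's inequality, and a Borel--Cantelli/summability argument driven by (B.4). The only organizational difference is that you apply Borel--Cantelli over the radius index $r$ with the fixed threshold $2m_r$ and then patch the finitely many small radii using local finiteness of $\xi$, whereas the paper sums $\mu\bigl(\bigcup_r\{\xi(\Qr)>n\,m_r\}\bigr)$ over the multiplier $n$ and reads off directly that for a.e.\ $\xi$ some $n_0$ gives $\xi(\Qr)\le n_0 m_r$ for \emph{all} $r$ simultaneously; both variants are routine and equivalent in content.
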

\begin{proof}
By Chebyshev's inequality we get 
\begin{align*}
\mu\left[ \xi(\Qr) > n\E^{\mu}(\xi(\Qr)) \right] &= \mu\left[ \xi(\Qr)-\E^{\mu}(\xi(\Qr)) > (n-1)\E^{\mu}(\xi(\Qr)) \right] \\
&\leq \frac{{\rm Var}(\xi(\Qr))}{(n-1)^2\E^{\mu}(\xi(\Qr))^2}. 
\end{align*}
Hence 
\begin{equation}
\sum_{n=2}^{\infty} \mu\left( \bigcup_{r=1}^{\infty} \bigg\{ \xi(\Qr) > n(\E^{\mu}(\xi(\Qr))) \bigg\} \right) \leq \sum_{n=2}^{\infty} \frac{1}{(n-1)^2} \sum_{r=1}^{\infty} \frac{{\rm Var}(\xi(\Qr))}{\E^{\mu}(\xi(\Qr))^2}. \label{borecansum} 
\end{equation}
By (B.4),  
\begin{equation*}
\frac{{\rm Var}(\xi(\Qr))}{\E^{\mu}(\xi(\Qr))^2} = \frac{{\rm Var}(\xi(U_{2^r}))}{\E^{\mu}(\xi(U_{2^r}))^2} = O(2^{-\delta r}) 
\end{equation*}
holds. Hence the RHS of (\ref{borecansum}) is finite. 
Therefore from Borel-Cantelli's Lemma, for $\mu$-a.s. $\xi$ there exists $n_0 \geq 2$ such that 
$\xi(\Qr) \leq n_0 \E^{\mu}(\xi(\Qr))$ for any $r\in\N$. 

By (B.1) we can check $\E^{\mu}[\xi(\Qr)] = O\left( 2^{(d+\kappa)r} \right)$.  Hence we see that for $\mu$-a.s. $\xi$, there exists $n \in \N$ such that $\xi(\Qr) \leq a_{n, r}$ for any $r \in \N$. This complete the proof. 
\end{proof}

We introduce the function $\chia$ defined by  
\begin{equation*}
\chi[\bda](\xi) = \rho \circ d_{\bda}(\xi), \quad d_{\bda}(\xi) = \sum_{r=1}^{\infty} \sum_{j \in J_{r, \xi}} \frac{(2^r-|x_j|) \wedge 2^{r-1}}{2^{r-1}a_r}, 
\end{equation*}
where $(x_j)_{j \in \N}$ is a sequence such that $|x_j| \leq |x_{j+1}|$ for all $j \in \N$,  
$\xi=\sum_{j} \delta_{x_j}$ and 
\begin{equation*}
J_{r, \xi}=\{ j; j>a_r, x_j \in \Qr \}. 
\end{equation*}
$\rho : \mathbb{R} \to [0, 1]$ is the function defined by 
\begin{equation*}
\rho(t) = \begin{cases}
1 &\text{if $t<0$}, \\
1-t &\text{if $0 \leq t \leq 1$},  \\
0 &\text{if $1<t$}. 
\end{cases}
\end{equation*}
 
\begin{lemma} \label{chiprop}
For any $\bda = \{ a_r \}_{r \in \N} \in \bd{A}$ we have 
\begin{equation*}
\chi[\bda](\xi)= 
\begin{cases}
1 &\text{if $\xi \in \frakM[\bda]$, } \\
\in [0, 1] &\text{if $\xi \in \frakM_{\bda}^{2\bda_+}$, } \\
0 &\text{if $\xi \in \frakM[2\bda_{+}]^c$, }
\end{cases}
\end{equation*}
where we set $\frakM_{\bda}^{\bdb} = \frakM[\bdb] \setminus \frakM[\bda]$ for $\bda, \bdb \in \bd{A}$. 
\end{lemma}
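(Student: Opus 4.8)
The plan is to read the three cases directly off the definition of $d_{\bda}$, using only that $\rho$ takes values in $[0,1]$, that $\rho(0)=1$, and that $\rho$ vanishes on $(1,\infty]$. First I would record two elementary observations. \emph{(a)} Every summand defining $d_{\bda}(\xi)$ is nonnegative: if $j\in J_{r,\xi}$ then $x_j\in\Qr=U_{2^r}$, so $2^r-|x_j|\ge 0$, hence $(2^r-|x_j|)\wedge 2^{r-1}\ge 0$, while the denominator $2^{r-1}a_r$ is positive. Thus $d_{\bda}(\xi)\ge 0$ for every $\xi\in\frakM$, and since $\rho(\R)\subset[0,1]$ we get $\chi[\bda](\xi)=\rho(d_{\bda}(\xi))\in[0,1]$ for every $\xi$; in particular this already disposes of the middle case. \emph{(b)} Fixing $r$ and using the enumeration $|x_1|\le|x_2|\le\cdots$ of the atoms of $\xi$, every atom of norm $\le 2^r$ precedes every atom of norm $>2^r$, so $\{\,j:x_j\in\Qr\,\}=\{1,2,\dots,\xi(\Qr)\}$; consequently $J_{r,\xi}=\{\,j:a_r<j\le\xi(\Qr)\,\}$, and similarly $\{\,j:a_r<j,\ x_j\in U_{2^{r-1}}\,\}=\{\,j:a_r<j\le\xi(U_{2^{r-1}})\,\}$.

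For the case $\xi\in\frakM[\bda]$: by definition $\xi(\Qr)\le a_r$ for all $r$, so by \emph{(b)} $J_{r,\xi}=\emptyset$ for every $r$; hence $d_{\bda}(\xi)=0$ and $\chi[\bda](\xi)=\rho(0)=1$.

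For the case $\xi\in\frakM[2\bda_+]^c$: since $(2\bda_+)_r=2a_{r+1}$, there is an index $r_0$ with $\xi(U_{2^{r_0}})>2a_{r_0+1}$. Put $r=r_0+1$ and bound the $r$-th inner sum of $d_{\bda}(\xi)$ from below, keeping only those $j\in J_{r,\xi}$ with $x_j\in U_{2^{r-1}}$. For such $j$ one has $|x_j|\le 2^{r-1}$, so $(2^r-|x_j|)\wedge 2^{r-1}=2^{r-1}$ and the corresponding summand equals exactly $1/a_r$. By \emph{(b)} the number of such indices is $\xi(U_{2^{r-1}})-a_r=\xi(U_{2^{r_0}})-a_{r_0+1}>a_{r_0+1}=a_r$. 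Hence the $r$-th inner sum, and therefore $d_{\bda}(\xi)$ itself (the remaining terms being nonnegative), exceeds $1$, so $\chi[\bda](\xi)=\rho(d_{\bda}(\xi))=0$.

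I do not expect a genuine obstacle; the argument is essentially bookkeeping. The one point demanding care is observation \emph{(b)} — correctly identifying which indices $j$ fall in $J_{r,\xi}$ under the norm-ordering, especially when several atoms share the same norm — together with the key mechanism that an atom lying in the half-radius ball $U_{2^{r-1}}$ contributes the full weight $1/a_r$ to the $r$-th block. This is precisely what converts the ``overfull'' hypothesis $\xi(U_{2^{r_0}})>2a_{r_0+1}$ into $d_{\bda}(\xi)>1$, and hence into the vanishing of $\chi[\bda]$ outside $\frakM[2\bda_+]$.
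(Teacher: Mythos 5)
Your proof is correct and follows essentially the same route as the paper's: in case (i) note $J_{r,\xi}=\emptyset$ so $d_{\bda}=0$; the middle case is just $\rho(\R)\subset[0,1]$; and in case (iii) count indices whose atoms lie in the half-radius ball so that each contributes $1/a_{r_0+1}$, forcing $d_{\bda_n}(\xi)>1$. If anything you are slightly more careful on the last step — you restrict the count explicitly to $\{\,j\in J_{r_0+1,\xi}:x_j\in U_{2^{r_0}}\,\}$, which is exactly the set whose summands attain the value $1/a_{r_0+1}$, whereas the paper's final displayed chain is phrased as a bound over all of $J_{r_0+1,\xi}$.
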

\begin{proof}
For $\xi \in \frakM[\bda]$, $J_{r, \xi} = \emptyset$ for all $r \in \N$. Then we can check $d_{\bda}(\xi) = 0$. Hence $\chi[\bda](\xi)=1$. 

If $\xi \in \frakM[2\bda_+]^c$ then there exists $r_0$ such that $\xi(U_{2^{r_0}}) \geq 2a_{r_0+1}+1$. Then 
\begin{equation}
\# J_{r_0+1, \xi} \geq a_{r_0+1}+1 \label{Jnumber}
\end{equation}
holds, where $\#J$ denotes the cardinality of the set $J$. We take $x_{j_0} \in \{ x_j \in U_{2^{r_0}}; j \in J_{r_0+1, \xi} \}$ and fix. Then $(2^{r_0+1}-|x_{j_0}|) \wedge 2^{r_0} = 2^{r_0}$ holds. Hence
\begin{equation} 
\frac{(2^{r_0+1}-|x_{j_0}|) \wedge 2^{r_0}}{2^{r_0}a_{r_0+1}} = \frac{1}{a_{r_0+1}}. \label{dhyouka}
\end{equation}
Therefore from (\ref{Jnumber}) and (\ref{dhyouka}), we have
\begin{equation*}
d_{\bda}(\xi) \geq \sum_{j \in J_{r_0+1, \xi}} \frac{(2^{r_0+1}-|x_j|) \wedge 2^{r_0}}{2^{r_0}a_{r_0+1}} \geq \frac{a_{r_0+1}+1}{a_{r_0+1}} \geq 1. 
\end{equation*}
Hence $\chi[\bda](\xi)=0$. 
Moreover since $\chia(\xi) = \rho \circ d_{\bda}(\xi)$ and $\rho(x) \in [0, 1]$ for all $x \in \R$, we see $\chia(\xi) \in [0, 1]$ for all $\xi \in \frakM$, especially for all $\xi \in \frakM_{\bda}^{2\bda_+}$. Thus the proof is completed. 
\end{proof}

From this lemma we can call $\chia$ a cut off function on $\frakM[\bda]$. 

\begin{lemma} \label{lem357}
Let $f \in \mathfrak{D}_{\infty}$ and $\bda=\{ a_r \} \in \bd{A}$ then we have 
\begin{equation}
\D[\chia f, \chia f](\xi) \leq 2 \left( \D[\chia , \chia](\xi)f(\xi)^2 +\D[f, f](\xi) \right),  \label{chiafchiaf} 
\end{equation}
and 
\begin{align}
&\D[(1-\chia )f, (1-\chia )f](\xi) \label{1-chichi} \\
&\leq \D[\chia , \chia ](\xi) f(\xi)^2 + \sum_{i=1}^{\infty} \int_{S} (1-\chia(\xi^{x_i, y}))^2 (f(\xi^{x_i, y})-f(\xi))^2 \nu(\xi, x_i; y)dy. \notag
\end{align}
\end{lemma}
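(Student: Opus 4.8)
The plan is to work pointwise in $\xi$ and exploit the fact that the square field $\D$ is a quadratic form of the discrete gradients $\nadeffsa{j}{y}$, together with the product/difference rule for these gradients. Fix $\xi = \sum_j \delta_{x_j} \in \frakM_{r,\sfn}$ for some $r$ and recall that $\D[h,h](\xi) = \frac{1}{2}\sum_{j} \int_S \{\nadeffsa{j}{y} h(\xi)\}^2 \nu(\xi,x_j;y)\,dy$. The key algebraic identity is the Leibniz-type rule for the jump gradient: for any two functions $u,v$,
\begin{equation*}
\nadeffsa{j}{y}(uv)(\xi) = u(\xi^{x_j y})\nadeffsa{j}{y}v(\xi) + v(\xi)\nadeffsa{j}{y}u(\xi),
\end{equation*}
which one checks directly from the definition $\nadeffsa{j}{y}h(\xi) = h(\xi^{x_j y}) - h(\xi)$. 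Applying this with $u = \chia$, $v = f$ gives $\nadeffsa{j}{y}(\chia f)(\xi) = \chia(\xi^{x_j y})\nadeffsa{j}{y}f(\xi) + f(\xi)\nadeffsa{j}{y}\chia(\xi)$. Using $(a+b)^2 \le 2a^2 + 2b^2$ and the bound $0 \le \chia(\xi^{x_j y}) \le 1$ from Lemma \ref{chiprop}, we get $\{\nadeffsa{j}{y}(\chia f)(\xi)\}^2 \le 2\{\nadeffsa{j}{y}f(\xi)\}^2 + 2 f(\xi)^2\{\nadeffsa{j}{y}\chia(\xi)\}^2$. Multiplying by $\nu(\xi,x_j;y)$, integrating in $y$, summing over $j$, and multiplying by $\frac12$ yields (\ref{chiafchiaf}), since $\chia(\xi^{x_j y})^2 \le 1$ is exactly what is needed to drop that factor in the first term.

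For (\ref{1-chichi}) I would apply the same gradient product rule with $u = 1-\chia$, $v = f$: $\nadeffsa{j}{y}((1-\chia)f)(\xi) = (1-\chia(\xi^{x_j y}))\nadeffsa{j}{y}f(\xi) - f(\xi)\nadeffsa{j}{y}\chia(\xi)$. Here the asymmetry of the claimed bound — no factor $2$, and the $f$-term appears without a $(1-\chia)$ weight while the difference term carries $(1-\chia(\xi^{x_i,y}))^2$ — signals that one should not blindly use $(a+b)^2 \le 2a^2+2b^2$, but rather expand the square and control the cross term more carefully, or regroup so that the $\nadeffsa{j}{y}\chia$ contribution is isolated cleanly. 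Writing $\D[(1-\chia)f,(1-\chia)f](\xi) = \frac12\sum_j\int_S\{(1-\chia(\xi^{x_j y}))\nadeffsa{j}{y}f(\xi) - f(\xi)\nadeffsa{j}{y}\chia(\xi)\}^2\nu\,dy$, one term is $\frac12\sum_j\int_S (1-\chia(\xi^{x_j y}))^2(\nadeffsa{j}{y}f(\xi))^2\nu\,dy$, which is precisely the sum on the right of (\ref{1-chichi}) written as $\sum_i\int_S (1-\chia(\xi^{x_i,y}))^2(f(\xi^{x_i,y})-f(\xi))^2\nu(\xi,x_i;y)\,dy$ — note the discrepancy in the prefactor $\frac12$ versus $1$, which should be harmless since the remaining cross and $\chia$-gradient terms are being bounded generously. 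The $f(\xi)^2\nadeffsa{j}{y}\chia$ term reassembles to $f(\xi)^2\D[\chia,\chia](\xi)$, and the cross term $-f(\xi)(1-\chia(\xi^{x_j y}))\nadeffsa{j}{y}f(\xi)\nadeffsa{j}{y}\chia(\xi)$ must be absorbed: bounding $|ab| \le \frac12 a^2 + \frac12 b^2$ with $a$ proportional to $(1-\chia(\xi^{x_j y}))\nadeffsa{j}{y}f$ and $b$ proportional to $f(\xi)\nadeffsa{j}{y}\chia$ distributes it between the two surviving terms.

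The main obstacle, then, is arranging the constants so the final inequality has exactly the stated form (coefficient $1$ on $\D[\chia,\chia]f^2$ and the stated difference sum, with no extra factor on the latter). I expect this is achieved either by a slightly asymmetric application of Young's inequality to the cross term — splitting it so one half strengthens the $\D[\chia,\chia]f^2$ coefficient and the other half is dominated by $(1-\chia(\xi^{x_j y}))^2(\nadeffsa{j}{y}f)^2\nu$ after noting $0 \le 1-\chia \le 1$ so $(1-\chia)^2 \le (1-\chia)$ where convenient — or by simply verifying that the crude expansion already lands inside the claimed bound because the right-hand side is deliberately not optimized. A secondary point worth checking is the interchange of the (finite, since $\xi$ is locally finite and $\chia f$, $f$ are local) sum over $j$ with the integrals, and the fact that all gradients $\nadeffsa{j}{y}\chia$ are supported where $y$ lies in an annulus determined by $\bda$, so every integral in sight converges by the integrability hypothesis on $\nu$. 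Finally, one passes from the truncated square field at level $\ell$ to $\D$ itself by the same $\ell \to \infty$ limiting argument already used to define $\D[f,f]$, the inequalities being preserved under the limit.
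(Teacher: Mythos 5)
Your approach matches the paper's exactly. For \eqref{chiafchiaf} you use the jump-gradient product rule $\nadeffsa{j}{y}(\chia f)(\xi) = \chia(\xi^{x_j y})\nadeffsa{j}{y}f(\xi) + f(\xi)\nadeffsa{j}{y}\chia(\xi)$, the bound $(a+b)^2 \le 2(a^2+b^2)$, and $0 \le \chia \le 1$; the paper writes the same identity directly at the level of differences, as $f(\xi^{x_i y})g(\xi^{x_i y})-f(\xi)g(\xi) = (g(\xi^{x_i y})-g(\xi))f(\xi) + g(\xi^{x_i y})(f(\xi^{x_i y})-f(\xi))$, squares it, and then substitutes $g=\chia$ and $g=1-\chia$. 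Same decomposition, same elementary estimate.

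Regarding your hesitation about the constant in \eqref{1-chichi}: you are right to be suspicious, but the resolution is not an asymmetric Young inequality. The paper's own proof applies the crude bound $(a+b)^2 \le 2(a^2+b^2)$ exactly as in your first calculation, which (after absorbing the $\tfrac12$ in the definition of $\D$) yields
\begin{equation*}
\D[(1-\chia)f,(1-\chia)f](\xi) \;\le\; \sum_{i}\int_{S}\bigl(\chia(\xi^{x_i y})-\chia(\xi)\bigr)^2\nu(\xi,x_i;y)\,dy\;f(\xi)^2 \;+\; \sum_{i}\int_{S}\bigl(1-\chia(\xi^{x_i y})\bigr)^2\bigl(\nadeffsa{i}{y}f(\xi)\bigr)^2\nu(\xi,x_i;y)\,dy,
\end{equation*}
and the first term on the right equals $2\,\D[\chia,\chia](\xi)\,f(\xi)^2$, not $\D[\chia,\chia](\xi)\,f(\xi)^2$ as written in the lemma. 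In fact no weighted Young inequality applied to the cross term can push the coefficient of $\D[\chia,\chia]f^2$ below $2$ while keeping the coefficient of the other summand at most $1$, so the stated coefficient $1$ is not reachable by this decomposition; it is evidently a factor-of-two slip in the lemma statement. This is harmless for the paper: where \eqref{1-chichi} is used in the proof of Lemma~\ref{chiadomain}, a coefficient $2$ in front of $\D[\chian,\chian]f^2$ merely replaces the constant $C$ from Lemma~\ref{lem358} by $2C$ in \eqref{chiadomain2}, and the conclusion \eqref{kinjitane} holds with $C+2$ replaced by a slightly larger constant. So your computation is correct; you should simply state and prove the bound with the coefficient $2$ rather than try to force the stated constant.
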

\begin{proof}
Let $f, g \in \Dinf$ and $\xi=\sum_i \delta_{x_i}$. 
\begin{align}
&\D [fg, fg](\xi) = \frac{1}{2} \sum_{i=1}^{\infty} \int_{S} (f(\xi^{x_i, y})g(\xi^{x_i, y})-g(\xi)f(\xi))^2 \nu(\xi, x_i; y)dy \label{DfgDfg} \\
&= \frac{1}{2} \sum_{i=1}^{\infty} \int_{S} \{ (g(\xi^{x_i, y})-g(\xi)) f(\xi)+g(\xi^{x_i, y})(f(\xi^{x_i, y})-f(\xi)) \}^2 \nu(\xi, x_i; y)dy \notag \\
&\leq \sum_{i=1}^{\infty} \int_{S} \{ (g(\xi^{x_i, y})-g(\xi))^2 f(\xi)^2 + g(\xi^{x_i, y})^2(f(\xi^{x_i, y})-f(\xi))^2 \} \nu(\xi, x_i; y)dy. \notag
\end{align}
Substituting $\chia$ into $g$ in the equation (\ref{DfgDfg}), we have
\begin{align*}
\D [\chia f, \chia f](\xi) 
&\leq  \sum_{i=1}^{\infty} \int_{S} \{ (\chia (\xi^{x_i, y})- \chia (\xi))^2  \} \nu(\xi, x_i; y)dy f(\xi)^2 \\
&\quad + \sum_{i=1}^{\infty} \int_{S} \{ \chia^2(\xi^{x_i, y})(f(\xi^{x_i, y})-f(\xi))^2 \} \nu(\xi, x_i; y)dy \\
&\leq 2\left( \D[\chia , \chia](\xi) f(\xi)^2 + \D[f, f] \right), 
\end{align*}
since $\chia (\xi) \leq 1$. 
Hence we obtain (\ref{chiafchiaf}). 

We prove the second claim. 
Substituting $1-\chia$ into $g$ in the equation (\ref{DfgDfg}), we have
\begin{align}
&\D [(1-\chia)f, (1-\chia)f](\xi) 
\notag \\ 
&\leq    \sum_{i=1}^{\infty} \int_{S} ( \chia(\xi^{x_i, y})-\chia(\xi))^2  \nu(\xi, x_i; y)dy f(\xi)^2 \notag \\
&\hspace{66pt} +  \sum_{i=1}^{\infty} \int_{S} (1-\chia(\xi^{x_i, y}))^2 (f(\xi^{x_i, y})-f(\xi))^2  \nu(\xi, x_i; y)dy. \notag 
\end{align}
Hence we get (\ref{1-chichi}). Thus the proof is completed. 
\end{proof}

The next lemma is a key part of the proof of Theorem \ref{Theorem.1}. 
\begin{lemma} \label{lem358}
Assume {\rm (B.0)}, {\rm (B.2)} and {\rm (B.3)}. Let $\bda_n = \{ a_{n, r} \}_{r \in \N} = \{n2^{(d+\kappa)r} \}_{r \in \N}$. 
Then there exists $C=C_{d, \alpha, \beta, \kappa}$ such that
\begin{equation}
\int_{\frakM} \D[\chian , \chian](\xi) f(\xi)^2 \mu(d\xi) \leq C \int_{\sfAn} f(\xi)^2 \mu(d\xi),   \label{chiaC}
\end{equation}
for all $n \in \N$ and $f \in \Dinf$, where $\alpha > \kappa$, $0 < \beta <2$ are in {\rm (B.1)} and {\rm (B.2)}.
\end{lemma}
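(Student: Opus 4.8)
The plan is to estimate the square field $\D[\chian,\chian](\xi)$ pointwise and then integrate, controlling the resulting expression by the $1$-correlation function thanks to conditions (B.0)--(B.3). Recall that
$$
\D[\chian,\chian](\xi)=\frac12\sum_{i}\int_{S}\bigl(\chian(\xi^{x_iy})-\chian(\xi)\bigr)^2\nu(\xi,x_i;y)\,dy .
$$
First I would observe that, by Lemma \ref{chiprop} together with the definition of $\chia=\rho\circ d_{\bda}$, the increment $\chian(\xi^{x_iy})-\chian(\xi)$ vanishes unless both $\xi$ and $\xi^{x_iy}$ lie outside $\frakM[\bda_n]$ or, more precisely, unless the pair $(\xi,\xi^{x_iy})$ meets $\sfAn=\frakM_{\bda_n-\1}^{2(\bda_n)_++\1}$; this already localises the whole integral to configurations near $\sfAn$, which is where the right-hand side of (\ref{chiaC}) lives. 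More importantly, I would exploit that $\rho$ is $1$-Lipschitz, so that
$$
\bigl(\chian(\xi^{x_iy})-\chian(\xi)\bigr)^2\le\bigl(d_{\bda_n}(\xi^{x_iy})-d_{\bda_n}(\xi)\bigr)^2,
$$
and then bound the displacement of $d_{\bda_n}$ caused by moving a single particle from $x_i$ to $y$. Because $d_{\bda}(\xi)=\sum_{r}\sum_{j\in J_{r,\xi}}\frac{(2^r-|x_j|)\wedge 2^{r-1}}{2^{r-1}a_r}$, moving one particle changes only finitely many of the inner terms and the change is controlled by $\sum_{r:\,x_i\in U_{2^r}\ \mathrm{or}\ y\in U_{2^r}}\frac{C}{a_{n,r}}$; with $a_{n,r}=n2^{(d+\kappa)r}$ this is a geometric sum dominated by $\frac{C}{n}2^{-(d+\kappa)r(|x_i|\wedge|y|)}$-type quantities, i.e. it decays in the radial coordinate of the pair.

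Next I would split the $y$-integral into the near region $|y-x_i|\le 1$ and the far region $|y-x_i|>1$. On the near region, using (B.0) and (B.3) one has $\nu(\xi,x_i;y)\le C_1p(|x_i-y|)\le C|x_i-y|^{-(d+\beta)}$ with $\beta<2$, and the displacement bound for $d_{\bda_n}$ gives an extra factor $(1\wedge|x_i-y|)$ (since moving a particle a short distance changes $d_{\bda_n}$ by at most $O(|x_i-y|/a_{n,r})$ through the Lipschitz dependence of $(2^r-|\cdot|)\wedge 2^{r-1}$), making $\int_{|y-x_i|\le1}(1\wedge|x_i-y|)^2|x_i-y|^{-(d+\beta)}dy$ finite. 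On the far region, using (B.0) and (B.2) one has $\nu(\xi,x_i;y)\le C|x_i-y|^{-(d+\alpha)}$ with $\alpha>\kappa$, and the displacement bound now no longer has the $|x_i-y|$ gain but is bounded by a constant; crucially, the product of this with the weight coming from $a_{n,r}$ and summed/integrated against $|x_i-y|^{-(d+\alpha)}dy$ produces, after integrating the Palm formula, something of size $\int\rho^1(x)\,dx$ times a convergent radial factor — this is exactly where $\alpha>\kappa$ is used, mirroring the inequality (\ref{rhojump}) in Remark (ii).

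Then I would integrate against $\mu(d\xi)f(\xi)^2$ and use the Campbell/Mecke (reduced Palm) formula to replace $\int_{\frakM}\mu(d\xi)\sum_i(\cdots)$ by $\int_S\rho^1(x)\,dx\int_{\frakM}\mu_x(d\xi)(\cdots)$-type integrals, after first pulling $f(\xi)^2$ out under the indicator that the relevant pair lies in $\sfAn$; since $\chian$ and its increments are supported on $\sfAn$, one gets $\int_{\sfAn}f(\xi)^2\mu(d\xi)$ on the right with a constant that, by the two regional estimates above, depends only on $d,\alpha,\beta,\kappa$ and not on $n$ — the $n$-dependence of $\bda_n$ having been absorbed because the geometric sums $\sum_r a_{n,r}^{-1}2^{(d+\kappa)r}\le\sum_r n^{-1}\le\cdots$ are uniformly bounded (indeed decreasing in $n$). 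The main obstacle I anticipate is the bookkeeping of exactly how much $d_{\bda_n}$ moves when a single particle jumps a long distance: one must show the change is summable over the dyadic scales $r$ with a bound independent of the particle positions and of $n$, and combine that carefully with the slow decay $|x_i-y|^{-(d+\alpha)}$ of $\nu$ so that the final radial integral converges precisely under $\alpha>\kappa$; getting the uniformity in $n$ (rather than a constant blowing up with $n$) is the delicate point, and it hinges on the specific choice $a_{n,r}=n2^{(d+\kappa)r}$.
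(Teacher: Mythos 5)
Your preliminary steps match the paper's argument: using that $\rho$ is $1$-Lipschitz to pass to $|d_{\bda_n}(\xi^{x_iy})-d_{\bda_n}(\xi)|$, observing that the increment vanishes off $\sfAn$, splitting the $y$-integral at $|y-x_i|=1$, and exploiting the local Lipschitz dependence of $(2^r-|\cdot|)\wedge 2^{r-1}$ to gain a factor of $|y-x_i|$ near the diagonal. However, your plan diverges from the paper's proof at the decisive point, and the route you choose has a genuine gap.

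The key step you are missing is that the estimate (\ref{chiaC}) is obtained from a \emph{pointwise} bound $\frac12\sum_i\int_S(\chian(\xi^{x_iy})-\chian(\xi))^2\,\nu(\xi,x_i;y)\,dy\le C$ uniformly for $\xi\in\sfAn$ (and all $n$). This uniform bound does \emph{not} come from integrating against $\mu$ and invoking the Campbell/Mecke formula with $\rho^1$; it comes from the deterministic constraint built into the definition of $\sfAn=\frakM_{\bda_n-\1}^{2(\bda_n)_++\1}$: for $\xi\in\sfAn$, the number of particles in each dyadic annulus $A_r=U_{2^r}\setminus U_{2^{r-1}}$ satisfies $\xi(A_r)\le 2a_{n,r+1}+1$. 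Combined with the displacement bound for $d_{\bda_n}$ when a particle moves from $A_r$ to $A_\ell$ — which is at most $\sum_{m=\ell\wedge r}^{\ell\vee r}a_{n,m}^{-1}$ — one sums over $r$ (and $\ell$ in the far case) a geometric series whose ratio is governed by $a_{n,r}=n2^{(d+\kappa)r}$, and the sums converge with a constant independent of $n$ \emph{exactly} because of the balance between $a_{n,r+1}$ (occupation bound) and $a_{n,r-1}^{-2}$ (displacement squared), and because $\alpha>\kappa$. Once this pointwise constant $C$ is in hand, (\ref{chiaC}) follows trivially by multiplying by $f(\xi)^2$ and integrating over $\sfAn$.

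Your Campbell/Mecke route, in contrast, cannot deliver the stated conclusion: after ``pulling $f(\xi)^2$ out under the indicator'' and applying the reduced Palm formula, the resulting expression involves $\int_S\rho^1(x)(\cdots)\,dx$ together with Palm-averaged functionals of $f$, which does not factor back into $\int_{\sfAn}f(\xi)^2\,\mu(d\xi)$ times a constant. Moreover, this route requires condition (B.1) on $\rho^1$, which is deliberately \emph{not} among the assumptions of Lemma \ref{lem358} (it is used only in Lemma \ref{teiin} to show $\mu(\bigcup_n\frakM[\bda_n])=1$). The clause ``$\alpha>\kappa$ are in (B.1) and (B.2)'' merely specifies the numerical values; the lemma itself is a pure hard-analysis estimate on the level set $\sfAn$, not a probabilistic averaging statement. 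You should replace the Palm-formula step with the direct counting bound $\xi(A_r)\le 2a_{n,r+1}+1$ on $\sfAn$, and then verify the three convergent sums (near, adjacent-far, distant-far annuli) as in (\ref{3582}), (\ref{3586}), (\ref{3588})--(\ref{C42}); the uniformity in $n$ then falls out automatically since $a_{n,r+1}/a_{n,r-1}^2 = O(1/n)$.
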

\begin{proof}
From assumption (B.0) we have
\begin{align} 
&\text{LHS of (\ref{chiaC})} \label{nup1}
\\ \notag
&\leq \int_{\sfAn} \mu(d\xi) f(\xi)^2 \cdot \frac{1}{2} \sum_{i=1}^{\infty} \int_S (\chian(\xi^{x_i, y})-\chian(\xi))^2 C_1p(|y-x_i|) dy. 
\end{align}
Here we used the fact that $\xi \notin {\sfAn}$ implies  $\chian(\xi^{x_i, y})-\chian(\xi) = 0$. From assumptions (B.2) and (B.3), it is enough to consider the case where 
\begin{equation*}
p(r) = 
\begin{cases}
r^{-d-\alpha} &\text{if $r \geq 1$}, \\ 
r^{-d-\beta} &\text{if $r < 1$}, 
\end{cases}
\end{equation*}
for $0<\kappa<\alpha$ and $0<\beta<2$. Since
\begin{equation*}
|\chia(\eta) - \chia(\xi)| \leq |d_{\bda}(\eta)-d_{\bda}(\xi)|, \quad \text{for $\bda \in \bd{A}$ and $\xi, \eta \in \frakM$, }
\end{equation*}
we have 
\begin{align}
&\text{RHS of (\ref{nup1})} \label{3581} 
\le \frac{C_1}{2}\int_{\sfAn} f(\xi)^2 \cdot (I_1(\xi)+I_2(\xi)) \mu(d\xi)
\end{align}
with
\begin{align}
&I_1(\xi)= \sum_{i=1}^{\infty} \int_S (d_{\bda_n}(\xi^{x_i, y})-d_{\bda_n}(\xi))^2 \frac{\1( |y-x_i| < 1 )}{|y-x_i|^{d+\beta}} dy \notag \\
&I_2(\xi)= \sum_{i=1}^{\infty} \int_S (d_{\bda_n}(\xi^{x_i, y})-d_{\bda_n}(\xi))^2 \frac{\1(|y-x_i| \geq 1)}{|y-x_i|^{d+\alpha}} dy \notag 
\end{align}
Let $A_r= \Qr \setminus U_{2^{r-1}}$ for $r \geq 2$ and $A_1 = U_2$. We see that 
\begin{equation*}
|d_{\bda_n}(\xi^{x_i, y})-d_{\bda_n}(\xi)| \leq \big| |x_i|-|y| \big| \left( \frac{1}{2^{r-1}a_{n, r}}+\frac{1}{2^{r-2}a_{n, r-1}}+\frac{1}{2^{r}a_{n, r+1}} \right), 
\end{equation*}
for $r \in \N$, $x_i \in A_r$, $y \in A_{r-1} \cup A_r \cup A_{r+1}$ with $|y-x_i| < 1$. 
Combining this with 
\begin{equation*}
\frac{1}{2^{r-1}a_{n, r}}+\frac{1}{2^{r-2}a_{n, r-1}}+\frac{1}{2^{r}a_{n, r+1}} \leq \frac{3}{2^{r-2}a_{n, r-1}}, \quad \text{for $n, r \in \N$, }
\end{equation*}
we obtain
\begin{equation*}
|d_{\bda_n}(\xi^{x_i, y})-d_{\bda_n}(\xi)| \leq \big| |x_i|-|y| \big| \frac{3}{2^{r-2}a_{n, r-1}} \leq \big| y-x_i \big| \frac{3}{2^{r-2}a_{n, r-1}},   
\end{equation*}
for $r \in \N$, $x_i \in A_r$, $y \in A_{r-1} \cup A_r \cup A_{r+1}$ with $|y-x_i| < 1$. 
Hence we have 
\begin{align}
&I_1 (\xi)\leq  9 \sum_{r=1}^{\infty} \sum_{x_i \in A_r} \int_S \big| |x_i|-|y| \big|^2 \frac{1}{2^{2r-4}a_{n, r-1}^2}  \frac{\1( |y-x_i| < 1 )}{|y-x_i|^{d+\beta}} dy \label{3582} \\
&\leq 9 \sum_{r=1}^{\infty} \sum_{x_i \in A_r}  \frac{1}{2^{2r-4}a_{n, r-1}^2} \int_S \frac{\1( |y-x_i| < 1)}{|y-x_i|^{d+\beta-2}} dy \notag \\
&\leq  9\sum_{r=1}^{\infty} \frac{2a_{n, r+1}+1}{2^{2r-4}a_{n, r-1}^2} \cdot \int_S \frac{\1( |z| < 1 )}{|z|^{d+\beta-2}} dz, \mbox{ for $\xi\in \sfAn$} \notag
\end{align}
where we use $\xi(A_r) \leq 2a_{n, r+1}+1$ for $\xi \in \sfAn$. 
We remark that 
\begin{equation}
\int_S \frac{\1( |z| < 1 )}{|z|^{d+\beta-2}} dz  < \infty, \quad \text{for all $0<\beta<2$. } \label{C1}
\end{equation}
In addition we have
\begin{align}
\sum_{r=1}^{\infty} \frac{2a_{n, r+1}+1}{2^{2r-4}a_{n, r-1}^2} &= \sum_{r=1}^{\infty} \frac{n2^{(r+1)(d+\kappa)+1}+1}{2^{2r-4} \cdot n^22^{2(r-1)(d+\kappa)}} \label{C2} \\
&= \sum_{r=1}^{\infty} \frac{1}{n} \cdot \frac{1}{2^{r(d+\kappa+2)-3(d+\kappa)-5}} + \sum_{r=1}^{\infty} \frac{1}{n^2} \cdot \frac{1}{2^{2r(d+\kappa+1)-2(d+\kappa)-4}} \notag \\
&\leq \frac{(1/2)^{-2(d+\kappa)-3}}{1-(1/2)^{d+\kappa+2}} + \frac{(1/2)^{-2}}{1-(1/2)^{2(d+\kappa+1)}} <\infty. \notag
\end{align}
Then from From (\ref{3582}), (\ref{C1}) and (\ref{C2}), there exists a constant $C_5 = C_5(d, \beta, \kappa)>0$ 
\begin{equation}
I_1(\xi) \leq C_5, \mbox{ for $\xi\in \sfAn$}. 
\label{I1bound}
\end{equation}
On the other hand we have
\begin{align}
&I_2(\xi) =  \sum_{r=1}^{\infty} \sum_{x_i \in A_r} \sum_{\ell =1}^{\infty} \int_{A_{\ell}}(d_{\bda_n}(\xi^{x_i, y})-d_{\bda_n}(\xi))^2 \frac{\1( |y-x_i| \geq 1 )}{|y-x_i|^{d+\alpha}} dy \notag \\ 
&=  \sum_{r=1}^{\infty} \sum_{x_i \in A_r} \int_{A_{r-1} \cup A_r \cup A_{r+1}}(d_{\bda_n}(\xi^{x_i, y})-d_{\bda_n}(\xi))^2 \frac{\1( |y-x_i| \geq 1 )}{|y-x_i|^{d+\alpha}} dy \notag \\ 
&\quad +  \sum_{r=1}^{\infty} \sum_{x_i \in A_r} \sum_{\substack{\ell =1 \\ |\ell-r|>1}}^{\infty} \int_{A_{\ell}}(d_{\bda_n}(\xi^{x_i, y})-d_{\bda_n}(\xi))^2 \frac{\1( |y-x_i| \geq 1 )}{|y-x_i|^{d+\alpha}} dy \notag \\
&\equiv I_3(\xi)+I_4(\xi). \notag
\end{align}
We see that 
\begin{equation*}
|d_{\bda_n}(\xi^{x_i, y})-d_{\bda_n}(\xi)| \leq \frac{1}{a_{n, r-1}}+\frac{1}{a_{n, r}}+\frac{1}{a_{n, r+1}}, 
\end{equation*}
for $r \in \N$, $x_i \in A_r$ and $y \in A_{r-1} \cup A_r \cup A_{r+1}$. 
Combining this with
\begin{equation*}
\frac{1}{a_{n, r}}+\frac{1}{a_{n, r-1}}+\frac{1}{a_{n, r+1}} \leq \frac{3}{a_{n, r-1}}, \quad \text{for $r, n \in \N$, }
\end{equation*}
we obtain 
\begin{equation*}
|d_{\bda_n}(\xi^{x_i, y})-d_{\bda_n}(\xi)| \leq \frac{3}{a_{n, r-1}}, 
\end{equation*}
for $r \in \N$, $x_i \in A_r$ and $y \in A_{r-1} \cup A_r \cup A_{r+1}$.
Hence we have 
\begin{align}
&I_3(\xi) \leq 9 \sum_{r=1}^{\infty} \sum_{x_i \in A_r} \int_{A_{r-1} \cup A_r \cup A_{r+1}} \frac{1}{a_{n, r-1}^2} \frac{\1( |y-x_i| \geq 1 )}{|y-x_i|^{d+\alpha}} dy \label{3586} \\ 
&\leq 9\sum_{r=1}^{\infty} \frac{2a_{n, r+1}+1}{a_{n, r-1}^2} \int_{A_{r-1} \cup A_r \cup A_{r+1}} \frac{\1( |y-x_i| \geq 1 )}{|y-x_i|^{d+\alpha}} dy \notag \\
&\leq 9\sum_{r=1}^{\infty} \frac{2a_{n, r+1}+1}{a_{n, r-1}^2} \int_{S} \frac{\1( |z| \geq 1 )}{|z|^{d+\alpha}} dz, 
 \mbox{ for $\xi\in \sfAn$}
\notag
\end{align} 
where we use $\xi(A_r) \leq 2a_{n, r+1}+1$ for $\xi \in \sfAn$. We remark that 
\begin{equation}
\int_{S} \frac{\1( |z| \geq 1 )}{|z|^{d+\alpha}} dz  < \infty, \quad \text{for all $\alpha>0$. } \label{C31}
\end{equation}
In addition we have
\begin{align}
\sum_{r=1}^{\infty} \frac{2a_{n, r+1}+1}{a_{n, r-1}^2} &= \sum_{r=1}^{\infty} \frac{n2^{(r+1)(d+\kappa)+1}+1}{n^22^{2(r-1)(d+\kappa)}} \label{C32} \\
&= \sum_{r=1}^{\infty} \frac{1}{n} \cdot \frac{1}{2^{r(d+\kappa)-3(d+\kappa)-1}} + \sum_{r=1}^{\infty} \frac{1}{n^2} \cdot \frac{1}{2^{2r(d+\kappa)-2(d+\kappa)}} \notag \\
&\leq \frac{(1/2)^{-2(d+\kappa)-1}}{1-(1/2)^{d+\kappa}} + \frac{1}{1-(1/2)^{2(d+\kappa)}} <\infty. \notag
\end{align}
Then from (\ref{3586}), (\ref{C31}) and (\ref{C32}), there exists a constant $C_6 = C_6(d, \alpha, \kappa)>0$ such that
\begin{equation}
I_3(\xi) \leq C_6 
, 
 \mbox{ for $\xi\in \sfAn$.}
 \label{I3bound}
\end{equation}
Moreover we see that  
\begin{equation*}
|d_{\bda_n}(\xi^{x_i, y})-d_{\bda_n}(\xi)| \leq \sum_{m=\ell \wedge r}^{\ell \vee r} \frac{1}{a_{n, m}},  
\end{equation*}
for $\ell, r \in \N$ with $|\ell-r|>1$, $x_i \in A_r$ and $y \in A_{\ell}$. 
Hence we have 
\begin{align}
&I_4(\xi) \leq  \sum_{r=1}^{\infty} \sum_{x_i \in A_r} \sum_{\substack{\ell =1 \\ |\ell-r|>1}}^{\infty} \int_{A_{\ell}}\left( \sum_{m=\ell \wedge r}^{\ell \vee r} \frac{1}{a_{n, m}} \right)^2 \frac{\1( |y-x_i| \geq 1 )}{|y-x_i|^{d+\alpha}} dy \label{3588} 
\\
&\leq \sum_{r=1}^{\infty} \sum_{x_i \in A_r} \sum_{\substack{\ell =1 \\ |\ell-r|>1}}^{\infty} \left( \sum_{m=\ell \wedge r}^{\ell \vee r} \frac{1}{a_{n, m}} \right)^2 \int_{A_{\ell}} \frac{\1( |y-x_i| \geq 1 )}{|2^{\ell \wedge r-1}-2^{\ell \vee r}|^{d+\alpha}} dy. \notag
\end{align}
We remark $|2^{\ell \wedge r-1}-2^{\ell \vee r}| \geq 2^{\ell \vee r-2}$ for any $\ell, r \in \N$ with $|\ell-r|>1$. Then 
\begin{align}
&\text{RHS of (\ref{3588})} \label{3589} \\
&\leq  \sum_{r=1}^{\infty} \sum_{x_i \in A_r} \sum_{\substack{\ell =1 \\ |\ell-r|>1}}^{\infty} \left( \sum_{m=\ell \wedge r}^{\ell \vee r} \frac{1}{a_{n, m}} \right)^2 \int_{A_{\ell}} \frac{\1( |y-x_i| \geq 1 )}{2^{(\ell \vee r-2)(d+\alpha)}} dy \notag 
\\
&\leq \sum_{r=1}^{\infty} \sum_{x_i \in A_r} \sum_{\substack{\ell =1 \\ |\ell-r|>1}}^{\infty} \left( \sum_{m=\ell \wedge r}^{\ell \vee r} \frac{1}{a_{n, m}} \right)^2 \frac{|A_{\ell}|}{2^{(\ell \vee r-2)(d+\alpha)}} \notag 
\\
&\leq \sum_{r=1}^{\infty} \sum_{\substack{\ell =1 \\ |\ell-r|>1}}^{\infty} \left( \sum_{m=\ell \wedge r}^{\ell \vee r} \frac{1}{a_{n, m}} \right)^2 \frac{(2a_{n, r+1}+1)|A_{\ell}|}{2^{(\ell \vee r-2)(d+\alpha)}}, \notag
\end{align}
where we use $\xi(A_r) \leq 2a_{n, r+1}+1$ for $\xi \in \sfAn$ and $|\cdot|$ denote the Lebesgue measure on $\R^d$. 
Since
\begin{align}
\sum_{m=\ell \wedge r}^{\ell \vee r} \frac{1}{a_{n, m}} &= \sum_{m=\ell \wedge r}^{\ell \vee r} \frac{1}{n2^{m(d+\kappa)}} 
\leq \frac{1}{n} \cdot \frac{(1/2)^{(\ell \wedge r)(d+\kappa)}}{1-(1/2)^{d+\kappa}}, \notag
\end{align}
we obtain 
\begin{align}
&\sum_{r=1}^{\infty} \sum_{\substack{\ell =1 \\ |\ell-r|>1}}^{\infty} \left( \sum_{m=\ell \wedge r}^{\ell \vee r} \frac{1}{a_{n, m}} \right)^2 \frac{(2a_{n, r+1}+1)|A_{\ell}|}{2^{(\ell \vee r-2)(d+\alpha)}} \label{C42} 
\\
&\leq \sum_{r=1}^{\infty} \sum_{\substack{\ell =1 \\ |\ell-r|>1}}^{\infty} \frac{1}{n^2}  \left\{ \frac{(1/2)^{(\ell \wedge r)(d+\kappa)}}{1-(1/2)^{d+\kappa}} \right\}^2 \frac{(n2^{(r+1)(d+\kappa)+1}+1) |A_{\ell}|}{2^{(\ell \vee r-2)(d+\alpha)}} \notag \\
&\leq C_7 \sum_{r=1}^{\infty} \sum_{\ell =r+1}^{\infty} \frac{1}{2^{r(d+\kappa)+\alpha\ell}} +C_8 \sum_{r=1}^{\infty} \sum_{\ell = 1}^{r-1} \frac{1}{2^{r(\alpha-\kappa)+\ell (d+2\kappa)}} < \infty, \notag
\end{align}
where $C_i = C_i(d, \alpha, \kappa)$ for $i=7, 8$ are constants depending on $d$, $\alpha$ and $\kappa$, and we use $\alpha-\kappa>0$. 
From (\ref{3588}), (\ref{3589}) and (\ref{C42}) there exists a constant $C_9 = C_9(d, \alpha, \kappa) >0$ such that
\begin{equation}
I_4(\xi) \leq C_9, \mbox{ for $\xi\in \sfAn$.} \label{I4bound}
\end{equation}
Therefore from (\ref{I1bound}), (\ref{I3bound}) and (\ref{I4bound}) we obtain
\begin{equation}
I_1(\xi) + I_2(\xi) = I_1(\xi)+I_3(\xi)+I_4(\xi) \leq C_5+C_6+C_9, \mbox{ for $\xi\in \sfAn$}
\label{I1+I2bound}
\end{equation}
and the lemma is proved with $C =\frac{C_1(C_5+C_6+C_9)}{2}$.\end{proof}

\begin{lemma} \label{chiadomain}
Let $\bda_n = \{ a_{n, r} \}_{r \in \N} = \{n2^{(d+\kappa)r} \}_{r \in \N}$. Then for $f \in \Dinf$ and $n \in \N$ we have 
\begin{equation}
\chian f \in \mathfrak{D} \label{chianfindomain}
\end{equation}
and  
\begin{equation}
\| (1-\chian)f \|_{1} \leq \sqrt{ (C+2) \int_{\frakM[\bda_n-\1]^c} \{ f(\xi)^2 +\D[f, f](\xi) \} \mu(d\xi)}, \label{kinjitane}
\end{equation}
where $C$ is the constant in Lemma \ref{lem358} and
$\|f\|_1^2 = \|f\|_{L^2(\frakM, \mu)}^2+\mathfrak{E}(f, f)$. 
\end{lemma}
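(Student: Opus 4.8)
The plan is to establish the domain membership (\ref{chianfindomain}) first, and then to deduce the estimate (\ref{kinjitane}) from the two pointwise inequalities of Lemma \ref{lem357} together with the energy bound of Lemma \ref{lem358}.

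First I would check that $\chian f$ has finite $\|\cdot\|_1$-norm. Since $0\le\chian\le 1$ we have $\|\chian f\|_{L^2(\frakM,\mu)}\le\|f\|_{L^2(\frakM,\mu)}$, and integrating (\ref{chiafchiaf}) over $\frakM$ and applying Lemma \ref{lem358} gives
\[
\mathfrak{E}(\chian f,\chian f)\le 2\int_{\frakM}\D[\chian,\chian](\xi)f(\xi)^2\mu(d\xi)+2\mathfrak{E}(f,f)\le 2C\int_{\sfAn} f^2\,d\mu+2\mathfrak{E}(f,f)<\infty .
\]
To upgrade this to membership in the closure $\mathfrak{D}$ I would approximate the cut-off from inside $\Dinf$: truncate the series defining $d_{\bda_n}(\xi)$ at a level $N$ (keeping only the terms with $r\le N$), smooth the non-differentiable ingredients $\rho$, $|\cdot|$ and the minima while keeping the Lipschitz bound $|\rho'|\le1$, and then apply the mollifier $\mathfrak{J}_{r,\e}$. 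This produces $g_N\in\Dinf$ with $0\le g_N\le1$, $g_N\to\chian$ $\mu$-a.e., and, by (\ref{chiafchiaf}) again, with $\D[g_N f,g_N f](\xi)$ dominated $\mu$-a.e.\ by the $\mu$-integrable function $2\bigl(\D[\chian,\chian](\xi)f(\xi)^2+\D[f,f](\xi)\bigr)$. Dominated convergence then gives $\|g_N f-\chian f\|_1\to0$, hence $\chian f\in\mathfrak{D}$; in particular $(1-\chian)f=f-\chian f\in\mathfrak{D}$, so $\|(1-\chian)f\|_1$ is well defined.

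For (\ref{kinjitane}) I would use two monotonicity facts about the sets $\frakM[\cdot]$ (note $\bda_n-\1\in\bd{A}$ since $a_{n,r}-1\ge1$). Because $\bda_n-\1\le\bda_n$ coordinatewise, $\frakM[\bda_n-\1]\subset\frakM[\bda_n]$, so by Lemma \ref{chiprop} the function $1-\chian$ vanishes on $\frakM[\bda_n-\1]$, whence $\|(1-\chian)f\|_{L^2(\frakM,\mu)}^2=\int_{\frakM[\bda_n-\1]^c}(1-\chian)^2f^2\,d\mu\le\int_{\frakM[\bda_n-\1]^c}f^2\,d\mu$. Moreover, if $\xi\in\frakM[\bda_n-\1]$ then for every $i$ and every $y$ we have $\xi^{x_i,y}(\Qr)\le\xi(\Qr)+1\le a_{n,r}$ for all $r$, so $\xi^{x_i,y}\in\frakM[\bda_n]$ and $1-\chian(\xi^{x_i,y})=0$. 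Hence in the right-hand side of (\ref{1-chichi}) the second term is supported on $\frakM[\bda_n-\1]^c$ and is bounded there by $\sum_i\int_S(f(\xi^{x_i,y})-f(\xi))^2\nu(\xi,x_i;y)\,dy=2\D[f,f](\xi)$, while the first term $\D[\chian,\chian](\xi)f(\xi)^2$ integrates, by Lemma \ref{lem358} and $\sfAn\subset\frakM[\bda_n-\1]^c$, to at most $C\int_{\frakM[\bda_n-\1]^c}f^2\,d\mu$. Adding the three contributions yields $\|(1-\chian)f\|_1^2\le(C+2)\int_{\frakM[\bda_n-\1]^c}(f^2+\D[f,f])\,d\mu$, and taking square roots gives (\ref{kinjitane}).

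The computation in the last paragraph is routine once the support properties are noted; the real obstacle is the approximation step for $\chian$, where a function assembled from the non-smooth pieces $|x_j|$, $a\wedge b$ and $\rho$ must be smoothed without spoiling the fixed $\mu$-integrable majorant for $\D[g_N f,g_N f]$. This is why I would mollify $\rho$ first (preserving $|\rho'|\le1$), handle the minima similarly, and only afterwards apply $\mathfrak{J}_{r,\e}$, so that (\ref{chiafchiaf}) and its integrable bound persist along the whole limit.
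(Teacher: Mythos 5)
Your treatment of (\ref{kinjitane}) reproduces the paper's argument faithfully, and in fact makes one step more explicit than the paper does: you spell out why the integration localizes to $\frakM[\bda_n-\1]^c$, namely that a single jump $\xi\mapsto\xi^{x_i,y}$ changes $\xi(\Qr)$ by at most one, so $\xi\in\frakM[\bda_n-\1]$ forces $\xi^{x_i,y}\in\frakM[\bda_n]$, on which $1-\chian$ vanishes by Lemma~\ref{chiprop}. Combined with $\sfAn\subset\frakM[\bda_n-\1]^c$, the bound $(1-\chian)^2\le1$, and Lemma~\ref{lem358}, this gives the $(C+2)$-estimate exactly as in the paper.

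For (\ref{chianfindomain}) the paper itself only verifies $\mathfrak E(\chian f,\chian f)<\infty$ via (\ref{chiafchiaf}) and Lemma~\ref{lem358} and then asserts $\chian f\in\mathfrak D$; you have correctly observed that finiteness of the pre-form energy does not by itself place a function in the closure $\mathfrak D=\overline{\Dinf}$, and you supply the missing approximation scheme (truncate $d_{\bda_n}$, smooth $\rho$ and the minima, mollify). That is the right idea and is in the spirit of \cite{O96}. However, your dominated-convergence step rests on the assertion that $\D[g_Nf,g_Nf]$ is dominated $\mu$-a.e.\ by $2\bigl(\D[\chian,\chian]f^2+\D[f,f]\bigr)$; applying (\ref{chiafchiaf}) to $g_N$ in place of $\chian$ yields $\D[g_Nf,g_Nf]\le2\bigl(\D[g_N,g_N]f^2+\D[f,f]\bigr)$, and to obtain the fixed majorant you need the uniform bound $\D[g_N,g_N]\le\D[\chian,\chian]$ (or at least a fixed $f^2\mu$-integrable majorant independent of $N$). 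This is precisely where the special Lipschitz structure of $d_{\bda_n}$ (the constants $1/(2^{r-1}a_{n,r})$ decaying in $r$, which is what makes Lemma~\ref{lem358} work) has to be transferred to the truncated and smoothed approximants, and a generic smoothing of $\rho$ and of the $\wedge$'s will not automatically give it. You flag the difficulty in your closing paragraph, but you stipulate the majorant rather than prove it; establishing that uniform carré-du-champ bound for the $g_N$ is the one genuine gap left to close.
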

\begin{proof}
By (\ref{chiafchiaf}) we have
\begin{align}
\mathfrak{E}(\chian f, \chian f) &\leq 2\int_{\frakM} \D[\chian , \chian](\xi)f(\xi)^2 \mu(d\xi) + 2\int_{\frakM} \D[f, f](\xi) \mu(d\xi) \label{Echiafchiaf} \\
&\leq 2\int_{\frakM} \D[\chian , \chian](\xi)f(\xi)^2 \mu(d\xi) + 4\mathfrak{E}(f, f). \notag
\end{align}
From Lemma \ref{lem358} the RHS of (\ref{Echiafchiaf}) is bounded by
\begin{equation*}
2C \int_{\sfAn} f(\xi)^2 \mu(d\xi) + 4\mathfrak{E}(f, f) \leq (2C + 4) \left( \|f\|_{L^2(\frakM, \mu)}^2 + \mathfrak{E}(f, f) \right) < \infty. 
\end{equation*}
Hence we obtain (\ref{chianfindomain}). 

We show the second claim. 
By (\ref{1-chichi}) we have
\begin{align}
&\mathfrak{E}((1-\chian)f, (1-\chian)f) \leq \int_{\frakM} \mu(d\xi) \D[\chian , \chian ](\xi) f(\xi)^2 \label{chiadomain1} \\
&\quad + \int_{\frakM} \mu(d\xi) \sum_{i}^{\infty} \int_{S} \{ (1-\chian(\xi^{x_i, y}))^2 (f(\xi^{x_i, y})-f(\xi))^2 \} \nu(\xi, x_i; y)dy. \notag
\end{align}
Since $1-\chian (\xi) = 0$ on $\frakM[\bda_n]$, from Lemma \ref{lem358} we have 
\begin{equation}
\text{RHS of (\ref{chiadomain1})} \leq C \int_{\frakM[\bda_n-\1]^c} f(\xi)^2 \mu(d\xi) + 2 \int_{\frakM[\bda_n-\1]^c} \D[f, f](\xi) \mu(d\xi). \label{chiadomain2}
\end{equation} 
Hence by (\ref{chiadomain2})
\begin{align}
\| (1-\chian)f \|_{1}^2 &\leq (C+1) \int_{\frakM[\bda_n-\1]^c} f(\xi)^2 \mu(d\xi) + 2 \int_{\frakM[\bda_n-\1]^c} \D[f, f](\xi) \mu(d\xi) \notag \\
&\leq (C+2) \int_{\frakM[\bda_n-\1]^c} \{ f(\xi)^2+\D[f, f](\xi) \} \mu(d\xi). \notag
\end{align}
Therefore we obtain (\ref{kinjitane}). Thus the proof is completed. 
\end{proof}

We show that the Dirichlet form $(\mathfrak{E}, \mathfrak{D})$ is a quasi-regular Dirichlet form, that is, $(\mathfrak{E}, \mathfrak{D})$ satisfies 
\begin{enumerate}
\item[(C.1)] \label{Q1} There exists an $\mathfrak{E}$-nest consisting of compact sets. 
\item[(C.2)] \label{Q2} There exists an $\| \cdot \|_1$-dense subset of $\mathfrak{D}$ whose elements have $\mathfrak{E}$-continuous $m$-versions.  
\item[(C.3)] \label{Q3} There exist $u_n \in \mathfrak{D}$, $n \in \mathbb{N}$, having $\mathfrak{E}$-continuous $m$-versions $\tilde{u}_n$, and an $\mathfrak{E}$-exceptional set $N$ such that $\{ \tilde{u}_n \}$ separates the points of $X-N$, i.e. for every pair $(s_1, s_2)$ of distinct points of $X-N$, there exists a function $\tilde{u}_n$ which satisfies $\tilde{u}_n(s_1) \neq \tilde{u}_n(s_2)$. 
\end{enumerate}
Please refer to \cite{FOT,MR} for the terminologies in the above.
Let 
\begin{equation*}
\mathfrak{D}_{{\rm cut}} = \{ \chian f; f \in \Dinf, n \in \mathbb{N} \}. 
\end{equation*}
By (\ref{chianfindomain}) we obtain $\mathfrak{D}_{{\rm cut}} \subset \mathfrak{D}$. By (\ref{kinjitane}) and Lemma \ref{teiin} we see $\Dinf \subset \overline{\mathfrak{D}_{{\rm cut}}}$, where $\overline{\mathcal{A}}$ denotes the closure of $\mathcal{A}$ with respect to $\|\cdot\|_1$. By $\overline{\Dinf}=\mathfrak{D}$ we see $\overline{\mathfrak{D}_{{\rm cut}}} = \mathfrak{D}$. Let $\mathfrak{D}(n) = \{ f \in \mathfrak{D}; f = 0 \: \text{a.e.} \: \xi \: \text{on} \: \frakM[2(\bda_{n})_+]^c \}$. By Lemma \ref{chiprop} we see 
\begin{equation*}
\mathfrak{D}_{{\rm cut}} \subset \bigcup_{n=1}^{\infty} \mathfrak{D}(n). 
\end{equation*}
Hence $\{ \frakM[2(\bda_{n})_+] \}_{n \in \N}$ is a compact nest. We thus obtain (C.1) in the above. 
(C.2) and (C.3) are shown by the same argument as that in \cite[p.127]{O96}. 
Combining these we see $(\mathfrak{E}, \mathfrak{D})$ is a quasi-regular Dirichlet form. 


\vspace{12pt}

\noindent
{\large{\bf{ACKNOWLEDGEMENT}}} \quad The author would like to express his thanks to Prof. Hideki Tanemura for his valuable suggestion, constant encouragement and many valuable comments. 


\end{document}